\documentclass[11pt]{article}

\usepackage[margin=1in]{geometry}
\usepackage{amsmath,amssymb,amsthm}
\usepackage{amsthm}
\usepackage{graphicx} % Required for inserting images
\usepackage{hyperref}
\newcommand{\email}[1]{\href{mailto:#1}{\nolinkurl{#1}}}
\usepackage{bm}
\usepackage{xcolor}
\usepackage{algorithm}
\usepackage{algpseudocode}
\usepackage{subfig}

\newcommand{\x}{{\bf x}}
\newcommand{\ii}{{\rm i}}
\newcommand{\bu}{{\bf u}}
\newcommand{\bv}{{\bf v}}
\newcommand{\bff}{{\bf f}}
\newcommand{\bh}{{\bf h}}
\newcommand{\Div}{{\rm div}}

\newtheorem{Theorem}{Theorem}[section]
\newtheorem{Problem}{Problem}[section]
\newtheorem{Proposition}{Proposition}[section]
\newtheorem{Remark}{Remark}[section]
\newtheorem{Lemma}{Lemma}[section]
\newtheorem{Corollary}{Corollary}[section]

\title{Inverse initial data for nonlinear Schr\"odinger equation via Carleman estimates and the contraction principle}

\author{
  Navaraj Neupane\thanks{Department of Mathematics and Statistics,
    University of North Carolina at Charlotte, Charlotte, NC 28223, USA
    (\email{nneupan2@charlotte.edu}
     \email{loc.nguyen@charlotte.edu}).}
  \and
  Loc Nguyen\footnotemark[1]
}
\date{}
\numberwithin{equation}{section}
\begin{document}
\maketitle
\begin{abstract}
We study an inverse initial-data problem for a nonlinear Schr\"odinger equation in which the initial wave field is reconstructed from lateral measurements. Our approach combines a Legendre-polynomial-exponential-time dimensional reduction with a Carleman-based contraction principle. First, we expand the solution in a weighted Legendre basis in time and truncate the expansion to obtain a coupled nonlinear elliptic system for the spatial coefficients. 
Next, we solve this reduced system by constructing a contraction map on a suitable admissible set.
This contraction map admits a unique fixed point, which is the limit of the corresponding Picard iteration.  We also establish a stability estimate showing that this fixed point remains close to the exact reduced solution in the noisy-data case. Finally, we present numerical experiments in two space dimensions for several different geometries and nonlinear exponents. The numerical results show that the proposed method accurately reconstructs the main features of the initial wave field and remains stable even when the boundary data contain noise.
\end{abstract}

\noindent \textbf{Keywords:} nonlinear Schr\"odinger equation; inverse initial-data problem; Carleman estimate; time-dimensional reduction; Legendre polynomial-exponential basis; contraction mapping; Picard iteration; noisy boundary data.

\noindent \textbf{MSC 2020:} 35R30, 35Q55, 35J57, 35B45, 65N21.

\section{Introduction}

Let $d\ge 1$ be the spatial dimension, let $\Omega$ be a bounded domain of $\mathbb{R}^d$ with smooth boundary, and let $T>0$ be a final time. We consider the nonlinear Schr\"odinger equation
\begin{equation}\label{maineqn}
\begin{cases}
\ii u_t + \Delta u + q(\x,t)|u|^{p-1}u = 0, & \text{in } \Omega\times(0,T),\\
u(\x,t)=0, & \text{on } \partial\Omega\times(0,T),\\
u(\x,0)=u^0(\x), & \text{in } \Omega,
\end{cases}
\end{equation}
where $u:\Omega\times(0,T)\to \mathbb{C}$ is the wave field and $u^0:\Omega\to\mathbb{C}$ is the initial wave field. The function $q(\x,t)$ is a given real-valued coefficient describing the strength of the nonlinear interaction, and $p>1$ is the exponent of the nonlinearity. In particular, $p=2$, $p=3$, and $p=5$ correspond to quadratic, cubic, and quintic nonlinear Schr\"odinger models, respectively. Among these, the cubic case $p=3$ is the most classical and widely studied, especially in nonlinear optics, Bose--Einstein condensation, and wave propagation in dispersive media. Moreover, if $q\in L^\infty(\Omega\times(0,T))$ and $u^0\in H_0^1(\Omega)$, then the forward problem \eqref{maineqn} is locally well-posed for any finite $p>1$ when $d=1,2$, whereas for $d\ge 3$ one works in the range $1<p\le 1+\frac{4}{d-2}$; see, for instance, \cite{cazenave2003,sulem1999}.

Assuming that \eqref{maineqn} has a unique solution, we are interested in the following inverse problem.

\begin{Problem}[Inverse initial-data problem]\label{idp}
Given the lateral Neumann data
\begin{equation}\label{data}
f(\x,t)=\partial_\nu u(\x,t)
\qquad \text{for all } (\x,t)\in \partial\Omega\times(0,T),
\end{equation}
reconstruct the initial wave field $u^0(\x)$ for $\x\in\Omega$.
\end{Problem}

This inverse problem is significant from both practical and mathematical points of view. Nonlinear Schr\"odinger equations arise in many applications, including nonlinear optics, Bose--Einstein condensation, plasma physics, and deep-water wave propagation \cite{lee2007,pitaevskii2016,sulem1999,vitanov2013}. In such settings, the initial wave field contains essential information about the state of the system at the initial time, but direct interior measurements are often difficult or impossible to obtain. By contrast, boundary observations are more accessible in experiments and monitoring processes. Therefore, recovering $u^0$ from lateral Neumann data provides a useful noninvasive way to identify the hidden initial state of the system. Once $u^0$ is reconstructed, the full wave field can then be recovered by solving the forward problem \eqref{maineqn}.

Inverse problems for Schr\"odinger equations have been studied extensively over the past several decades. Early works focused mainly on the recovery of electric potentials, coefficients, and magnetic fields from boundary measurements \cite{bellassoued_benfraj2020,bellassoued_choulli2010,bellassouedkiansoccorsi2016,choullikiansoccorsi2015,eskin2003,eskin2008}. In the linear Schr\"odinger setting, representative contributions include inverse potential recovery results under degenerate weights \cite{mercado2008}, problems with discontinuous and variable coefficients \cite{baudouin2008,deng2015}, magnetic and electromagnetic inverse problems in bounded and cylindrical geometries \cite{bellassoued_benfraj2020,bellassoued_choulli2009,bellassoued_choulli2010,bellassoued2018cylindrical,benaicha2018,cristofol2011,huang2019carleman,kian_soccorsi2019}, and a Neumann-boundary formulation \cite{saci2023}. More recently, inverse problems for nonlinear Schr\"odinger equations and partial boundary data have also attracted considerable attention. Uniqueness for nonlinear magnetic Schr\"odinger equations on conformally transversally anisotropic manifolds was established in \cite{krupchyk2023}. Partial-data inverse problems for nonlinear magnetic Schr\"odinger equations were studied in \cite{lai_zhou2023}, while partial-data determination of a time-dependent nonlinear coefficient was obtained in \cite{lai_lu_zhou2024}. Stable determination of coefficients in nonlinear dynamical Schr\"odinger equations from Neumann data was investigated in \cite{arrepu2025}. These works form the main historical background for the present study.
A conventional numerical approach to nonlinear inverse problems is to formulate a least-squares discrepancy functional and minimize it by an iterative optimization procedure. Such methods can be effective, but they often depend strongly on the choice of initial guess and may converge slowly or become trapped in undesirable local minima when the initial approximation is poor. In contrast, our approach begins by eliminating the time variable through a Legendre polynomial-exponential expansion and truncating the solution to the first $N+1$ modes. This reduces the original inverse problem to a coupled nonlinear elliptic system for the spatial coefficients. We then solve the reduced system by a Carleman--Picard strategy: at each iteration, the nonlinear term is frozen at the current approximation, and the next iterate is defined as the unique minimizer of a Carleman-weighted regularized functional. This procedure generates a contraction map on a suitable admissible set, and hence the Picard iteration converges from an arbitrary initial guess to a unique fixed point. The approximate initial wave field is finally reconstructed by evaluating the truncated expansion at $t=0$.

The methodological background of the present paper comes from the combination of time-dimensional reduction and the Carleman contraction principle. This approach was first developed in \cite{le2022parabolic} for an inverse initial-value problem for a quasilinear parabolic equation. Later, \cite{nguyen2023carleman} showed that the method can be interpreted as the construction of a contraction mapping whose fixed point is the desired solution. Consequently, the associated Picard iteration converges globally, even when the initial guess is far from the true solution. Since then, this framework has been extended to a variety of inverse problems; see, for example, \cite{AbneyLeNguyenPeters, dang2024hyperbolic, LeCON2023,le2024timedim,nguyen2022hyperbolic,NguyenNguyen2026,NguyenNguyenVu2026,van2025navierstokes}, in which inverse problems for hyperbolic, parabolic, elliptic, elasticity and Navier-Stokes equations were investigated. 

Nevertheless, these earlier results, which require a Lipschitz condition imposed on the nonlinearity, cannot be applied directly to the present problem because of the $p$-growth nonlinearity in \eqref{maineqn}. After the time-dimensional reduction, the reduced system contains nonlinear terms of the form
\[
\left|
\sum_{\ell=0}^N u_\ell(\x)\Psi_\ell(t)
\right|^{p-1}
\left(
\sum_{\ell=0}^N u_\ell(\x)\Psi_\ell(t)
\right),
\]
which induce nonlinear coupling among all reduced modes and do not satisfy the structural assumptions imposed on the nonlinearities in \cite{le2022parabolic,nguyen2023carleman}. Therefore, a new adaptation of the Carleman contraction framework is required for the nonlinear Schr\"odinger equation.

The main contribution of this paper is to develop a Carleman contraction method for the inverse initial-data problem for the nonlinear Schr\"odinger equation. More precisely, we construct a contraction map on a suitable admissible set for the time-dimensional reduction model and prove that its unique fixed point can be obtained by a globally convergent Picard iteration. We then show that this fixed point is consistent with the exact reduced solution. In the noisy-data case, we establish a stability estimate showing that the fixed point remains close to the exact reduced solution, with the reconstruction error controlled by the noise level and the regularization parameter. Unlike several standard Carleman-based frameworks, our noise estimate does not require any special structural condition on the noise. 

The remainder of the paper is organized as follows. Section \ref{sec2} recalls the analytical tools needed later, including the relevant Carleman estimate and the properties of the Legendre polynomial-exponential basis. Section \ref{sec3} derives the time-dimensional reduction model and the reduced boundary data. Section \ref{sec4} develops the Carleman-based contraction principle for the reduced system. Section \ref{sec5} proves the consistency of the fixed point with the exact reduced solution. Section \ref{sec6} presents the numerical algorithm and computational examples. Section \ref{sec7} is for the concluding remarks.

\section{Preliminary analytical tools} \label{sec2}

This section presents the main analytical ingredients used throughout the paper. First, we recall a Carleman estimate for an elliptic operator in divergence form, which will be used later in the analysis of the reduced system. Next, we summarize the basic properties of the Legendre polynomial-exponential basis underlying our time-dimensional reduction method. For the reader's convenience, we also include a convergence result for the expansion of the first-time derivative.

\subsection{A Carleman estimate}
\label{subsec:carleman_estimate}

A key tool in our analysis is a Carleman estimate for an elliptic operator in divergence form.
Let
\[
A:\overline{\Omega}\to\mathbb{R}^{d\times d}
\]
be a matrix-valued function of class $C^2(\overline{\Omega})$. Assume that
\begin{enumerate}
    \item $A$ is symmetric, that is,
    \[
    A^{\rm T}=A;
    \]
    \item $A$ is uniformly elliptic: there exists a constant $\Lambda>0$ such that
    \begin{equation}\label{ellipic_matrix}
    \Lambda^{-1}|\xi|^2 \le A(\x)\xi\cdot \xi \le \Lambda |\xi|^2
    \qquad \text{for all } \x\in\overline{\Omega},\ \xi\in\mathbb{R}^d.
    \end{equation}
\end{enumerate}
Let $\x_0\in\mathbb{R}^d\setminus \overline{\Omega}$ and define
\[
r(\x)=|\x-\x_0|,
\qquad \x\in\overline{\Omega}.
\]
Also, let
\[
R=\max_{\x\in\overline{\Omega}} r(\x).
\]

\begin{Lemma}
\label{thmCarpointwise}
Let $u\in C^2(\overline{\Omega})$. Then there exists a constant $\beta_0>0$, depending only on $\|A\|_{C^1(\overline{\Omega})}$ and $\Lambda$, such that for every $\beta\ge \beta_0$ and every $\lambda\ge \lambda_0:=2R^\beta$,
\begin{equation}
r^{\beta+2} e^{2\lambda r^{-\beta}} |\Div(A\nabla u)|^2
\ge
C\Big[
\Div(U)
+\lambda^3\beta^4 e^{2\lambda r^{-\beta}} r^{-2\beta-2}|u|^2
+\lambda\beta e^{2\lambda r^{-\beta}} |\nabla u|^2
\Big]
\label{CarEst}
\end{equation}
in $\Omega$, where $U$ is a vector-valued function satisfying
\begin{equation}
|U|
\le
C e^{2\lambda r^{-\beta}}
\big(
\lambda^3\beta^3 r^{-2\beta-2}|u|^2
+\lambda\beta |\nabla u|^2
\big),
\label{divU}
\end{equation}
and where $C>0$ depends only on $\x_0$, $\Omega$, $\|A\|_{C^1(\overline{\Omega})}$, $\Lambda$, and $d$.
\end{Lemma}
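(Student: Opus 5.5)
We prove the pointwise Carleman estimate \eqref{CarEst} by the standard conjugation argument. We may assume $u$ is real-valued; for complex $u$, apply the result to the real and imaginary parts and add, since $A$ is real. Set $\varphi=\lambda r^{-\beta}$, $w=e^{\varphi}u$, so that $u=e^{-\varphi}w$. A direct computation gives
\[
e^{\varphi}\Div(A\nabla u)=\Div(A\nabla w)-2A\nabla\varphi\cdot\nabla w+\big(A\nabla\varphi\cdot\nabla\varphi-\Div(A\nabla\varphi)\big)w .
\]
We split the right-hand side as $L_1w+L_2w$, with
\[
L_1w=\Div(A\nabla w)+(A\nabla\varphi\cdot\nabla\varphi)w,\qquad
L_2w=-2A\nabla\varphi\cdot\nabla w-\Div(A\nabla\varphi)w .
\]
The leading-order terms of the gradient are $\nabla\varphi=-\lambda\beta r^{-\beta-2}(\x-\x_0)$ and $|\nabla\varphi|\sim\lambda\beta r^{-\beta-1}$. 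The term $A\nabla\varphi\cdot\nabla\varphi\sim\lambda^2\beta^2r^{-2\beta-2}$ dominates $\Div(A\nabla\varphi)=O(\lambda\beta^2 r^{-\beta-2})$ once $\lambda r^{-\beta}\ge\lambda_0R^{-\beta}\ge 2$; this is where the choice $\lambda\ge 2R^\beta$ enters. Multiplying by $r^{\beta+2}$ and using $|a+b|^2\ge 2ab$ gives
\[
r^{\beta+2}e^{2\varphi}|\Div(A\nabla u)|^2\ge 2r^{\beta+2}L_1w\,L_2w .
\]
The real work is to show that the cross term $2r^{\beta+2}L_1wL_2w$ equals a divergence plus a positive quadratic form in $(w,\nabla w)$.

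We expand the product into four pieces and integrate each by parts pointwise, i.e.\ write it as $\Div(\cdot)+$ remainder.

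\textbf{(a) The $w\cdot w$ piece.} From $(A\nabla\varphi\cdot\nabla\varphi)w\cdot(-2A\nabla\varphi\cdot\nabla w)$ we use $w\nabla w=\tfrac12\nabla(w^2)$ and obtain
\[
\Div\big(-(A\nabla\varphi\cdot\nabla\varphi)r^{\beta+2}A\nabla\varphi\,w^2\big)+\Div\big(r^{\beta+2}(A\nabla\varphi\cdot\nabla\varphi)A\nabla\varphi\big)w^2 .
\]
The leading term of the last divergence comes from differentiating the power $r^{-3\beta}$ hidden in $(\nabla\varphi)^3r^{\beta+2}$. It has the sign $\sim c\lambda^3\beta^4 r^{-2\beta-2}w^2$ with $c>0$, for $\beta\ge\beta_0$ large relative to $\|A\|_{C^1}$ and $\Lambda$. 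The sign follows from ellipticity and from $(\x-\x_0)\cdot\nabla r>0$.

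\textbf{(b) The $\nabla w$ piece.} From $\Div(A\nabla w)\cdot(-2A\nabla\varphi\cdot\nabla w)r^{\beta+2}$ we use the standard identity
\[
\Div(A\nabla w)(B\cdot\nabla w)=\Div\big((B\cdot\nabla w)A\nabla w-\tfrac12(A\nabla w\cdot\nabla w)B\big)-\nabla B:(A\nabla w\otimes\nabla w)+\tfrac12\Div(B)\,A\nabla w\cdot\nabla w+O(|\nabla A||B||\nabla w|^2),
\]
applied with $B=-2r^{\beta+2}A\nabla\varphi\approx 2\lambda\beta A(\x-\x_0)$. This yields a gradient term of order $\lambda\beta|\nabla w|^2$. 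It may carry a wrong sign at leading order; we compensate by adding $\epsilon$ times the product $L_1w\cdot r^{\beta+2}\,\cdot$ (zeroth-order multiplier) $\cdot w$, integrated by parts. This produces $+\lambda\beta|\nabla w|^2$ at the cost of $O(\lambda^3\beta^3)w^2$, which is absorbed by the $\lambda^3\beta^4$ term from (a) for $\beta$ large.

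\textbf{(c) The remaining cross terms.} The pieces involving $\Div(A\nabla\varphi)w$ are lower order, of size $\lambda^3\beta^3$ or $\lambda\beta$ after Cauchy--Schwarz. They are absorbed in the same way.

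Finally we return to $u$ via $w=e^{\varphi}u$ and $\nabla w=e^{\varphi}(\nabla u+u\nabla\varphi)$. This gives $e^{2\varphi}|\nabla u|^2\le 2|\nabla w|^2+2\lambda^2\beta^2r^{-2\beta-2}|w|^2$, and the second term is absorbed by the $\lambda^3\beta^4$ term. Collecting the divergence pieces defines $U$. Each flux in (a) and (b) is bounded by $e^{2\varphi}\big(\lambda^3\beta^3r^{-2\beta-2}|u|^2+\lambda\beta|\nabla u|^2\big)$, which gives \eqref{divU}.

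The main obstacle is the sign bookkeeping in step (b). We must choose the auxiliary multiplier so that the gradient coefficient becomes positive uniformly in $\Omega$ while the $w^2$ coefficient keeps order $\lambda^3\beta^4$. Here the extra factor of $\beta$, produced by differentiating $r^{-\beta}$ in (a), is what permits absorbing all $O(\lambda^3\beta^3)$ errors.
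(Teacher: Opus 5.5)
Your route is the standard conjugation argument that the paper points to; the paper omits the proof. That is: conjugate with $w=e^{\varphi}u$, split into $L_1,L_2$, keep only $2r^{\beta+2}L_1wL_2w$, and integrate by parts pointwise. Your computation in (a), the identity in (b), and the return from $w$ to $u$ are correct.

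\textbf{The gap is in (c).} The pieces containing $\Div(A\nabla\varphi)$ are \emph{not} lower order. With $y=\x-\x_0$ one has exactly $r^{\beta+2}\Div(A\nabla\varphi)=\lambda\beta(\beta+2)r^{-2}(Ay\cdot y)-\lambda\beta\,\Div(Ay)$, which is of size $\lambda\beta^2$. Your own bound $\Div(A\nabla\varphi)=O(\lambda\beta^2r^{-\beta-2})$ already says this; that $A\nabla\varphi\cdot\nabla\varphi$ dominates $\Div(A\nabla\varphi)$ is beside the point. Compare the two zeroth-order pieces:
\begin{itemize}
\item the $\Div(A\nabla\varphi)$ piece, $-2r^{\beta+2}(A\nabla\varphi\cdot\nabla\varphi)\,\Div(A\nabla\varphi)\,w^2$, equals $-2\lambda^3\beta^3(\beta+2)r^{-2\beta-6}(Ay\cdot y)^2w^2+O(\lambda^3\beta^3r^{-2\beta-2})w^2$;
\item your term from (a) equals $+4\lambda^3\beta^3(\beta+2)r^{-2\beta-6}(Ay\cdot y)^2w^2+O(\lambda^3\beta^3r^{-2\beta-2})w^2$.
\end{itemize}
The negative term has the same order $\lambda^3\beta^4$ as the positive one, so taking $\beta$ large cannot absorb it. The estimate survives only because the leading coefficients are in the ratio $4:2$, which leaves $2\lambda^3\beta^3(\beta+2)r^{-2\beta-6}(Ay\cdot y)^2w^2\ge 2\Lambda^{-2}\lambda^3\beta^4r^{-2\beta-2}w^2$. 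This exact comparison is the crux of the zeroth-order bound, and it is missing from your argument. Your claim ``absorbed in the same way'' is false.

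\textbf{The gradient term, and why your fix in (b) fails.} The other piece, $-2r^{\beta+2}\Div(A\nabla\varphi)\,w\,\Div(A\nabla w)$, is also leading order, but it helps you. One integration by parts gives three things:
\begin{itemize}
\item a divergence;
\item the good term $+2r^{\beta+2}\Div(A\nabla\varphi)\,A\nabla w\cdot\nabla w\ge 2\Lambda^{-2}\lambda\beta(\beta+2)|\nabla w|^2-C\lambda\beta|\nabla w|^2$;
\item a cross term bounded by $C\lambda\beta^2|w||\nabla w|$.
\end{itemize}
After Cauchy--Schwarz, the resulting $C\lambda\beta^2w^2$ is absorbed by $\lambda^3\beta^4r^{-2\beta-2}w^2$ because $\lambda r^{-\beta}\ge 2$ and $\beta$ is large. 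This is where $\lambda\ge 2R^\beta$ genuinely enters. It is this $\lambda\beta^2|\nabla w|^2$ term that beats the sign-indefinite $O(\lambda\beta)|\nabla w|^2$ from (b).

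So the auxiliary multiplier in (b) is unnecessary, and as you wrote it, it is not even available. After $|a+b|^2\ge 2ab$ you have discarded $|L_1w|^2$, so there is no product $L_1w\cdot w$ left to add for free. You would have to keep $|L_1w|^2$, or move $\epsilon m w$ into $L_2$ and pay for a remainder.

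\textbf{A useful check: $A=I$,} where everything is exact.
\begin{itemize}
\item (a) gives $2\lambda^3\beta^3(2\beta+2-d)r^{-2\beta-2}w^2$.
\item The $\Delta\varphi\,|\nabla\varphi|^2w^2$ term gives $-2\lambda^3\beta^3(\beta+2-d)r^{-2\beta-2}w^2$.
\item (b) gives $\lambda\beta(2d-4)|\nabla w|^2$.
\item The $\Delta\varphi\,w\,\Delta w$ term gives $2\lambda\beta(\beta+2-d)|\nabla w|^2$.
\end{itemize}
Hence $2r^{\beta+2}L_1wL_2w$ equals a divergence plus $2\lambda\beta^2|\nabla w|^2+2\lambda^3\beta^4r^{-2\beta-2}w^2$. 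Both positive terms come from combining your (a)/(b) terms with the (c) terms you dismissed.
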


The proof is based on the behavior of the exponential weight $e^{\lambda r^{-\beta}}$. The main task is to estimate the quantity
\[
e^{2\lambda r^{-\beta}} |\Div(A\nabla u)|^2.
\]
To do so, one applies the product rule to the weighted operator. In the course of this computation, the second derivatives of $u$ are redistributed into terms involving first-order derivatives and zeroth-order terms. At the same time, each differentiation of the exponential weight produces a factor containing the large parameter $\lambda$. As a result, after expanding the weighted operator, one obtains dominant positive terms with high powers of $\lambda$, as seen in \eqref{CarEst}. We omit the proof here and refer the reader to \cite{LeLeNguyen:2024} for the full details.

A convenient consequence of Lemma~\ref{thmCarpointwise} is the following simplified form.

\begin{Corollary}
\label{cor:CarEst1}
Fix $\beta\ge \beta_0$. Then there exists a constant $\lambda_0>0$, depending only on $\Lambda$, $\|A\|_{C^2(\overline{\Omega})}$, $\x_0$, $\Omega$, $R$, $\beta$, and $d$, such that for all $\lambda\ge \lambda_0$,
\begin{equation}
e^{2\lambda r^{-\beta}} |\Div(A\nabla u)|^2
\ge
C\Big[
\Div(U)
+\lambda^3 e^{2\lambda r^{-\beta}} |u|^2
+\lambda e^{2\lambda r^{-\beta}} |\nabla u|^2
\Big]
\label{CarEst1}
\end{equation}
in $\Omega$, where $C>0$ depends only on $\Lambda$, $\|A\|_{C^2(\overline{\Omega})}$, $\x_0$, $\Omega$, $R$, $\beta$, and $d$.
\end{Corollary}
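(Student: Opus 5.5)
Corollary~\ref{cor:CarEst1} follows from Lemma~\ref{thmCarpointwise} by fixing $\beta$ and absorbing all $r$- and $\beta$-dependent factors into the constant, using that $r$ is bounded above and below on $\overline{\Omega}$. Since $\x_0\notin\overline{\Omega}$ and $\overline{\Omega}$ is compact, we have
\[
0<r_0:=\operatorname{dist}(\x_0,\overline{\Omega})\le r(\x)\le R
\qquad\text{for all } \x\in\overline{\Omega}.
\]
Fix $\beta\ge\beta_0$ and take $\lambda\ge \lambda_0:=2R^\beta$ (possibly enlarged later), so that \eqref{CarEst} holds pointwise in $\Omega$.

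The first step is to divide \eqref{CarEst} by $r^{\beta+2}$, which is legitimate because $r^{\beta+2}\ge r_0^{\beta+2}>0$. This gives
\[
e^{2\lambda r^{-\beta}}|\Div(A\nabla u)|^2 \ge C r^{-\beta-2}\Big[\Div(U)+\lambda^3\beta^4 e^{2\lambda r^{-\beta}}r^{-2\beta-2}|u|^2+\lambda\beta e^{2\lambda r^{-\beta}}|\nabla u|^2\Big].
\]
For the two positive terms, we bound the weights from below using $r\le R$:
\[
r^{-\beta-2}\cdot\beta^4 r^{-2\beta-2}\ge \beta^4R^{-3\beta-4},
\qquad
r^{-\beta-2}\beta\ge \beta R^{-\beta-2}.
\]
These are positive constants depending only on $\beta$ and $R$, so they can be absorbed into $C$.

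The main point is the divergence term, since $r^{-\beta-2}\Div(U)$ is not itself a divergence. We handle it with the product rule:
\[
r^{-\beta-2}\Div(U)=\Div\big(r^{-\beta-2}U\big)-\nabla(r^{-\beta-2})\cdot U .
\]
We set $\tilde U:=r^{-\beta-2}U$. Then \eqref{divU} together with $r\ge r_0$ gives $|\tilde U|\le C e^{2\lambda r^{-\beta}}(\lambda^3|u|^2+\lambda|\nabla u|^2)$. For the remainder, $|\nabla(r^{-\beta-2})|=(\beta+2)r^{-\beta-3}\le C$, and by \eqref{divU},
\[
|\nabla(r^{-\beta-2})\cdot U|\le C e^{2\lambda r^{-\beta}}\big(\lambda^3|u|^2+\lambda|\nabla u|^2\big),
\]
with $C$ depending on $\beta, r_0, R$.

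This remainder has the same order in $\lambda$ as the good terms, so it cannot simply be absorbed, and this is the main obstacle. I will resolve it by tracking the $\beta$-powers in \eqref{CarEst}. The good terms carry $\beta^4$ and $\beta$, while the bound \eqref{divU} on $U$ carries $\beta^3$ and $\beta$, and the remainder picks up an extra factor $(\beta+2)r_0^{-1}$. For the zeroth-order term this gives $\beta^4$ versus $C\beta^4$, which is still not decisive. So I will first use the freedom in the lemma to take $\lambda$ larger than $\lambda_0$, and then reabsorb the remainder.

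Concretely, I will apply Lemma~\ref{thmCarpointwise} with $\lambda$ replaced by $\lambda$ and the weight written as $e^{2\lambda r^{-\beta}}$, then write $\lambda=\lambda'+\mu$. Alternatively, and more simply, I will exploit that the lemma controls $\lambda^3\beta^4$ against a divergence whose flux is only of size $\lambda^3\beta^3$. If needed, I can also apply the lemma with a larger $\beta'$ and compare weights on the bounded range $[r_0,R]$.

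If that bookkeeping proves delicate, a fallback is to accept the divergence term in the corollary in the weighted form $r^{-\beta-2}\Div(U)$. When this is integrated over $\Omega$, the term becomes a boundary integral plus the remainder above. After integration the remainder can be controlled using the lemma's $\beta^4$-versus-$\beta^3$ gap, by enlarging $\beta_0$ so that $(\beta+2)/r_0\cdot\beta^3\ll\beta^4$. This holds when $r_0$ is not small relative to $1$, which can be arranged by moving $\x_0$ or rescaling.

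Finally, I will collect all constants. They depend only on $\Lambda$, $\|A\|_{C^2}$, $\x_0$, $\Omega$, $R$, $\beta$ and $d$. The new $\lambda_0$ is the larger of $2R^\beta$ and whatever threshold the absorption step requires.
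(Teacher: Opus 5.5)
Your treatment of the two nonnegative terms is fine, but there is a genuine gap: the obstacle you create is never resolved. Dividing \eqref{CarEst} pointwise by the nonconstant function $r^{\beta+2}$ turns $\Div(U)$ into $r^{-\beta-2}\Div(U)$. The product-rule remainder $\nabla(r^{-\beta-2})\cdot U$ is, by \eqref{divU}, of the same order in $\lambda$ as the good terms, with an uncontrolled constant.

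None of your proposed repairs works:
\begin{itemize}
\item For the zeroth-order part, the remainder-to-good ratio is $(\beta+2)/(\beta r)$ times a ratio of the lemma's constants. That is bounded but not small.
\item For the gradient part, the remainder carries $\beta(\beta+2)r^{-\beta-3}$ against the good $\beta r^{-\beta-2}$, so enlarging $\beta$ makes things worse.
\item Enlarging $\lambda$ changes nothing, since both sides scale like $\lambda^3$ and $\lambda$.
\item Moving $\x_0$ or rescaling is not allowed. $\x_0$ is fixed in the statement, and the constants in Lemma~\ref{thmCarpointwise} depend on it.
\item Your fallback of keeping $r^{-\beta-2}\Div(U)$ proves a different statement. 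The point of the corollary is that the sign-indefinite term is an exact divergence, which is what yields the boundary integral in Corollary~\ref{col222}.
\end{itemize}

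The missing observation is that $r^{\beta+2}$ sits on the larger side of the inequality. So you should not divide by it; bound it from above by $R^{\beta+2}$ and then divide by that constant. Using also $r^{-2\beta-2}\ge R^{-2\beta-2}$, Lemma~\ref{thmCarpointwise} gives
\[
R^{\beta+2}e^{2\lambda r^{-\beta}}|\Div(A\nabla u)|^2 \ge C\Big[\Div(U)+\beta^4R^{-2\beta-2}\lambda^3e^{2\lambda r^{-\beta}}|u|^2+\beta\lambda e^{2\lambda r^{-\beta}}|\nabla u|^2\Big].
\]
Set $c_1:=\min\{\beta^4R^{-2\beta-2},\beta\}>0$. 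This yields \eqref{CarEst1} with constant $CR^{-\beta-2}c_1$ and with $U$ replaced by $U/c_1$. By \eqref{divU} and $r\ge r_0>0$, the field $U/c_1$ still satisfies $|U/c_1|\le Ce^{2\lambda r^{-\beta}}(\lambda^3|u|^2+\lambda|\nabla u|^2)$. The threshold $\lambda_0=2R^\beta$ from the lemma suffices, with no further enlargement.

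The general rule: a sign-indefinite divergence term may only be multiplied by positive constants, whereas nonnegative terms may be multiplied by any positive function bounded below. This routine absorption is what the paper means by calling the corollary a direct consequence of the lemma.
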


Integrating \eqref{CarEst1} over $\Omega$ and using \eqref{divU}, we obtain the following global form.

\begin{Corollary}
\label{col222}
There exists a constant $C>0$, depending only on $\Lambda$, $\|A\|_{C^2(\overline{\Omega})}$, $\x_0$, $\Omega$, $R$, $\beta$, and $d$, such that
\begin{multline}
\int_{\Omega} e^{2\lambda r^{-\beta}} |\Div(A\nabla u)|^2\,d\x
\ge
C \int_{\Omega} e^{2\lambda r^{-\beta}}
\big[
\lambda^3 |u|^2+\lambda |\nabla u|^2
\big]\,d\x
\\
-
C\int_{\partial\Omega} e^{2\lambda r^{-\beta}}
\big[
\lambda^3 |u|^2+\lambda |\nabla u|^2
\big]\,d\sigma(\x).
\label{3.40}
\end{multline}
In particular, if
\[
u|_{\partial\Omega}=0
\qquad \text{and} \qquad
\nabla u|_{\partial\Omega}=0,
\]
then
\begin{equation}
\int_{\Omega} e^{2\lambda r^{-\beta}} |\Div(A\nabla u)|^2\,d\x
\ge
C \int_{\Omega} e^{2\lambda r^{-\beta}}
\big[
\lambda^3 |u|^2+\lambda |\nabla u|^2
\big]\,d\x.
\label{CarlemanExercise}
\end{equation}
\end{Corollary}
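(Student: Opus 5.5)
The plan is to integrate the pointwise inequality \eqref{CarEst1} of Corollary~\ref{cor:CarEst1} over $\Omega$ and turn the divergence term into a boundary integral with the divergence theorem. For $u\in C^2(\overline{\Omega})$, and $\lambda\ge\lambda_0$ as in Corollary~\ref{cor:CarEst1}, integration gives
\[
\int_\Omega e^{2\lambda r^{-\beta}}|\Div(A\nabla u)|^2\,d\x \ge C\int_\Omega \Div(U)\,d\x + C\int_\Omega e^{2\lambda r^{-\beta}}\big[\lambda^3|u|^2+\lambda|\nabla u|^2\big]\,d\x .
\]
Gauss's theorem then rewrites $\int_\Omega \Div(U)\,d\x=\int_{\partial\Omega} U\cdot\nu\,d\sigma(\x)$. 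This requires $U$ to be regular enough, say $C^1(\overline{\Omega})$ or $W^{1,1}(\Omega)$. That holds because $U$ is built from $u$, $\nabla u$ and smooth weights, as in the construction in \cite{LeLeNguyen:2024}. If one prefers not to track this, one can prove the inequality for smooth $u$ and pass to the limit by density, since both sides depend continuously on $u$ in the $H^2$ norm.

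Next, I bound the boundary term from below by $-\int_{\partial\Omega}|U|\,d\sigma$ and apply \eqref{divU}. Since $\beta$ is fixed and $r$ ranges over a compact interval $[r_{\min},R]$ with $r_{\min}=\operatorname{dist}(\x_0,\overline{\Omega})>0$, the factors $\beta^3 r^{-2\beta-2}$ and $\beta$ are bounded above by a constant depending only on $\x_0$, $\Omega$, $R$ and $\beta$. This gives
\[
|U|\le C e^{2\lambda r^{-\beta}}\big(\lambda^3|u|^2+\lambda|\nabla u|^2\big)\quad\text{on }\partial\Omega .
\]
Combining the two displays and renaming constants yields \eqref{3.40}. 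The same argument works for complex-valued $u$, applied to the real and imaginary parts separately or directly with moduli.

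For the particular case, if $u=0$ and $\nabla u=0$ on $\partial\Omega$, the boundary integrand vanishes identically, and \eqref{CarlemanExercise} follows immediately. The main point to handle carefully is not an estimate but the justification of the divergence theorem, namely the regularity of $U$ up to the boundary. The other delicate point is keeping the dependence of $C$ free of $\lambda$. This holds because the $\lambda$-powers in \eqref{divU} exactly match those in the interior terms once the $r$- and $\beta$-dependent factors are absorbed into $C$.
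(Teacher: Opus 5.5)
Your proposal is correct and follows the paper's argument: integrate the pointwise estimate \eqref{CarEst1} over $\Omega$, convert $\int_\Omega \Div(U)\,d\x$ into a boundary integral by the divergence theorem, and bound it from below using \eqref{divU}, with $\beta$ fixed and $r$ bounded between positive constants on $\partial\Omega$. When $u$ and $\nabla u$ vanish on $\partial\Omega$, the boundary integrand vanishes and \eqref{CarlemanExercise} follows; your remarks on the regularity of $U$ and on complex-valued $u$ are details the paper leaves implicit.
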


\begin{Remark}
Estimate \eqref{3.40} is closely related to \cite[Lemma 5]{nguyen2015cloaking}. The main difference is that the result in \cite[Lemma 5]{nguyen2015cloaking} was established for annular domains, whereas \eqref{3.40} is valid for more general bounded domains. It is worth mentioning that the Carleman estimate in \cite[Lemma 5]{nguyen2015cloaking} was used there to prove a cloaking phenomenon. The reader can find many other variants of Carleman estimates in \cite{beilina2012,klibanov2022convexification, klibanov2021book,  nguyen2019, protter1960}. Such estimates have become an essential tool in the study of inverse problems; see, for example, \cite{khoa2020, le2022parabolic, nguyen2022hyperbolic}.
\end{Remark}

\begin{Corollary}
\label{cor:carleman_laplacian}
Assume that $A=I$, the identity matrix. Then $\Div(A\nabla u)=\Delta u$, and Corollary~\ref{col222} yields the following estimate
\begin{multline}
\int_{\Omega} e^{2\lambda r^{-\beta}} |\Delta u|^2\,d\x
\ge
C \int_{\Omega} e^{2\lambda r^{-\beta}}
\big[
\lambda^3 |u|^2+\lambda |\nabla u|^2
\big]\,d\x
\\
-
C\int_{\partial\Omega} e^{2\lambda r^{-\beta}}
\big[
\lambda^3 |u|^2+\lambda |\nabla u|^2
\big]\,d\sigma(\x).
\label{3.41}
\end{multline}
In particular, if $u|_{\partial \Omega} = 0$ then $|\nabla u| = |\partial_{\nu} u|$. In this case, \eqref{3.41} becomes
\begin{equation}
\int_{\Omega} e^{2\lambda r^{-\beta}} |\Delta u|^2\,d\x
\ge
C \int_{\Omega} e^{2\lambda r^{-\beta}}
\big[
\lambda^3 |u|^2+\lambda |\nabla u|^2
\big]\,d\x
-
C\int_{\partial\Omega} e^{2\lambda r^{-\beta}}
\lambda |\partial_{\nu} u|^2d\sigma(\x).
\label{3.42}
\end{equation}

\end{Corollary}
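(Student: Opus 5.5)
The plan is to specialize Corollary~\ref{col222} to $A=I$ and then simplify the boundary term using the Dirichlet condition. The identity matrix is symmetric, lies in $C^2(\overline{\Omega})$ with $\|I\|_{C^2(\overline{\Omega})}=1$, and satisfies \eqref{ellipic_matrix} with $\Lambda=1$. So the hypotheses of Lemma~\ref{thmCarpointwise} and Corollaries~\ref{cor:CarEst1} and \ref{col222} hold, and $\Div(I\nabla u)=\Delta u$. Estimate \eqref{3.41} is then just \eqref{3.40} with $A=I$. The constant $C$ now depends only on $\x_0,\Omega,R,\beta,d$, since $\Lambda$ and $\|A\|_{C^2}$ are fixed.

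If $u$ is complex-valued, as it will be in later applications to the reduced Schr\"odinger system, I would apply \eqref{3.41} separately to $\mathrm{Re}\,u$ and $\mathrm{Im}\,u$ and add the two inequalities. This works because $\Delta$ commutes with taking real and imaginary parts, and $|u|^2=|\mathrm{Re}\,u|^2+|\mathrm{Im}\,u|^2$, with the same identity for $|\nabla u|^2$ and $|\Delta u|^2$. The same remark lets Lemma~\ref{thmCarpointwise} be used as stated. For $u$ with only $H^2$ regularity, I would approximate by $C^2(\overline{\Omega})$ functions, which is possible since $\partial\Omega$ is smooth, and pass to the limit using the continuity of the trace.

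For \eqref{3.42}, assume $u|_{\partial\Omega}=0$. Since $u$ is constant on $\partial\Omega$, every tangential derivative of $u$ vanishes there, so $\nabla u=(\partial_\nu u)\nu$ on $\partial\Omega$ and $|\nabla u|=|\partial_\nu u|$. Substituting $|u|^2=0$ and $|\nabla u|^2=|\partial_\nu u|^2$ into the boundary integral of \eqref{3.41} makes the $\lambda^3|u|^2$ contribution disappear and leaves exactly $C\int_{\partial\Omega}e^{2\lambda r^{-\beta}}\lambda|\partial_\nu u|^2\,d\sigma(\x)$.

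No step here is a genuine obstacle. The only points needing care are the justification that tangential derivatives vanish on the smooth boundary, which is where smoothness of $\partial\Omega$ enters, and the density argument if $u$ is only in $H^2$.
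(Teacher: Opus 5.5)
Your proposal is correct and follows the paper's own route: set $A=I$ in Corollary~\ref{col222} to obtain \eqref{3.41}, then use $u|_{\partial\Omega}=0$ to drop the boundary $\lambda^3|u|^2$ term and replace $|\nabla u|$ by $|\partial_\nu u|$ to get \eqref{3.42}. The paper leaves two points implicit, and your remarks on them are sound additions: splitting a complex-valued $u$ into real and imaginary parts, and using density for $H^2$ functions.
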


\subsection{The Legendre polynomial-exponential basis}
\label{sec:legendre_review}

Our time-dimensional reduction method is based on the Legendre polynomial-exponential basis introduced in \cite{trong2025elastic}; see also \cite{van2025navierstokes} for related properties used in the reduction process. For the reader's convenience, we briefly summarize the main definitions and facts needed later.

Let $\{P_n\}_{n\ge 0}$ be the classical Legendre polynomials on $(-1,1)$, given by Rodrigues' formula
\[
P_n(x)=\frac{1}{2^n n!}\frac{d^n}{dx^n}(x^2-1)^n.
\]
To transfer this family to the interval $(0,T)$, we use the affine change of variables
\[
x=\frac{2t}{T}-1,
\]
and define
\[
Q_n(t):=\sqrt{\frac{2n+1}{T}}\,P_n\!\left(\frac{2t}{T}-1\right),
\qquad t\in(0,T),\quad n\ge 0.
\]
Then $\{Q_n\}_{n\ge 0}$ is an orthonormal basis of $L^2(0,T)$.

Following \cite{trong2025elastic}, we introduce the weighted functions
\[
\Psi_n(t):=e^t Q_n(t), \qquad t\in(0,T),\quad n\ge 0.
\]
The family $\{\Psi_n\}_{n\ge 0}$ is orthonormal with respect to the weighted inner product
\[
\langle u,v\rangle_{e^{-2t}}
:=
\int_0^T e^{-2t}u(t)v(t)\,dt,
\]
and therefore forms an orthonormal basis in the weighted space
\[
L^2_{e^{-2t}}(0,T)
:=
\left\{
u\in L^2(0,T):
\int_0^T e^{-2t}|u(t)|^2\,dt<\infty
\right\}.
\]

\begin{Remark}
The weighted space $L^2_{e^{-2t}}(0,T)$ coincides with the classical space $L^2(0,T)$, since the weight $e^{-2t}$ is positive and bounded above and below on the finite interval $(0,T)$. In particular, the corresponding norms are equivalent. We use the notation $L^2_{e^{-2t}}(0,T)$ in order to emphasize the presence of the weight $e^{-2t}$ in the associated inner product and norm.
\end{Remark}

We next recall several properties of this basis that will be used throughout the paper.

\begin{Proposition}[See \cite{trong2025elastic}]
\label{prop:basis-properties}
The Legendre polynomial-exponential basis functions $\Psi_n$, $n\ge 0$, satisfy the following properties.
\begin{enumerate}
    \item For each $n\ge 0$, the function $\Psi_n$ is infinitely differentiable on $(0,T)$, and none of its derivatives of any order vanishes identically on this interval.
    
    \item For every integer $\ell\in\mathbb{N}$, there exists a constant $C>0$, depending only on $\ell$ and $T$, such that for all $u\in H^\ell(0,T)$,
    \begin{equation}\label{eq:coeff_decay}
        \sum_{n=0}^{\infty} n^{2\ell}
        \left|
        \langle u,\Psi_n\rangle_{e^{-2t}}
        \right|^2
        \le C\|u\|_{H^\ell(0,T)}^2.
    \end{equation}
    
    \item There exists a constant $C>0$, depending only on $T$, such that for all $n\ge 1$,
    \begin{equation}\label{eq:psi_prime_growth}
        \|\Psi_n'\|_{L^2_{e^{-2t}}(0,T)} \le C n^{3/2},
        \qquad
        \|\Psi_n''\|_{L^2_{e^{-2t}}(0,T)} \le C n^{7/2}.
    \end{equation}
\end{enumerate}
\end{Proposition}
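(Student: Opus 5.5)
The plan is to prove the three items of Proposition~\ref{prop:basis-properties} separately, using only the structure $\Psi_n = e^t Q_n$ and classical facts about Legendre polynomials.

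\textbf{Item 1.} Since $Q_n$ is a polynomial, $\Psi_n$ is smooth. For the non-vanishing of derivatives, Leibniz's rule gives
\[
\Psi_n^{(k)}(t)=e^t\sum_{j=0}^{k}\binom{k}{j}Q_n^{(j)}(t).
\]
The sum is a polynomial whose leading coefficient equals that of $Q_n$, which is nonzero because $Q_n$ has exact degree $n$. Hence the sum is not the zero polynomial, and $\Psi_n^{(k)}$ is not identically zero on $(0,T)$.

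\textbf{Item 2.} The key identity is
\[
\langle u,\Psi_n\rangle_{e^{-2t}}=\int_0^T e^{-t}u(t)Q_n(t)\,dt=\langle v,Q_n\rangle_{L^2(0,T)}, \qquad v:=e^{-t}u .
\]
Multiplication by $e^{-t}$ is bounded on $H^\ell(0,T)$, so $\|v\|_{H^\ell}\le C\|u\|_{H^\ell}$. The estimate therefore reduces to the classical weighted decay of Legendre coefficients.

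To obtain that decay, I will use the Sturm--Liouville operator
\[
Lw=-\frac{d}{dt}\big(t(T-t)w'\big),
\]
for which $LQ_n=n(n+1)Q_n$. Integrating by parts, with the weight vanishing at the endpoints, gives
\[
n(n+1)\langle v,Q_n\rangle=\langle Lv,Q_n\rangle \quad\text{when } \ell\ge 2.
\]
Iterating and applying Parseval bounds $\sum n^{4m}|\langle v,Q_n\rangle|^2$ by $\|L^m v\|^2\le C\|v\|_{H^{2m}}^2$. For odd $\ell$, I will use instead the identity
\[
\sum n(n+1)|c_n|^2=\int t(T-t)|v'|^2 ,
\]
combined with the previous step applied to $v$, or equivalently interpolation. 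The step I expect to be the main obstacle is the bookkeeping at the endpoints: the boundary terms vanish only thanks to the degenerate weight, so each integration by parts must be justified for $v\in H^\ell$.

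\textbf{Item 3.} Here $\Psi_n'=e^t(Q_n+Q_n')$, so
\[
\|\Psi_n'\|_{L^2_{e^{-2t}}}\le \|Q_n\|+\|Q_n'\|.
\]
By the Markov-type inequality $\|P_n'\|_{L^2(-1,1)}\le Cn^2\|P_n\|$, or more precisely by the expansion
\[
P_n'=\sum_{k\equiv n-1\ (2),\,k<n}(2k+1)P_k ,
\]
one gets $\|P_n'\|^2=\sum(2k+1)^2\frac{2}{2k+1}\sim n^2$. After normalization this gives $\|Q_n'\|\le Cn^{2}$, which is even sharper than what is needed; I will check constants against $n^{3/2}$ and use the explicit expansion for safety. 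Similarly,
\[
\Psi_n''=e^t(Q_n+2Q_n'+Q_n'')
\]
with $\|Q_n''\|\le Cn^{4}$ from applying the expansion twice. If these exponents turn out incompatible with the stated $n^{3/2}$ and $n^{7/2}$, I will instead take the bounds as cited from \cite{trong2025elastic}, which is the statement's stated source.
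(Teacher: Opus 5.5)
The genuine gap is in item 3. A bound $\|Q_n'\|_{L^2(0,T)}\le Cn^2$ is weaker than the required $Cn^{3/2}$, not sharper, since $n^2\ge n^{3/2}$. Likewise $\|Q_n''\|\le Cn^4$ does not give $Cn^{7/2}$. So your argument, as written, does not establish \eqref{eq:psi_prime_growth}. The fallback of quoting \cite{trong2025elastic} is a citation, not a proof; it is also all the paper itself does here, referring to Proposition 2.1 and Lemmas 2.1--2.2 there.

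The exponent $n^2$ is what the general $L^2$ Markov inequality gives, and it is not sharp for Legendre polynomials. In your explicit computation the slip is the normalization step. Your own expansion gives $\|P_n'\|_{L^2(-1,1)}^2=2\sum(2k+1)=n(n+1)$, so $\|P_n'\|\sim n$. Since $\|Q_n'\|_{L^2(0,T)}^2=\frac{2(2n+1)}{T^2}\|P_n'\|_{L^2(-1,1)}^2$, you get $\|Q_n'\|^2=\frac{2(2n+1)n(n+1)}{T^2}\le\frac{12n^3}{T^2}$. Hence $\|\Psi_n'\|_{L^2_{e^{-2t}}(0,T)}=\|Q_n+Q_n'\|_{L^2(0,T)}\le Cn^{3/2}$.

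For the second derivative, write the expansion as $Q_n'=\frac{2}{T}\sum_k\sqrt{(2n+1)(2k+1)}\,Q_k$, summed over $k<n$ with $k\equiv n-1 \pmod 2$. Differentiate it and use the corrected bound on each $\|Q_k'\|$. This gives $\|Q_n''\|\le\frac{2}{T}\sqrt{2n+1}\sum_{1\le k<n}\sqrt{2k+1}\,\|Q_k'\|\le C\sqrt n\sum_{k<n}k^2\le Cn^{7/2}$. This order is sharp: from $P_n''=\sum_j (j+\tfrac12)\bigl(n(n+1)-j(j+1)\bigr)P_j$, summed over $j\le n-2$ with $j\equiv n \pmod 2$, one finds $\|Q_n''\|^2\asymp n^7$. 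Your $n^4$ was just the first error propagated. Then $\|\Psi_n''\|_{L^2_{e^{-2t}}(0,T)}\le 1+2\|Q_n'\|+\|Q_n''\|\le Cn^{7/2}$ for $n\ge1$, with $C$ depending only on $T$.

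Items 1 and 2 are correct, and they give a self-contained route where the paper only cites. The Leibniz leading-coefficient argument settles item 1. The substitution $v=e^{-t}u$ together with $L=-\frac{d}{dt}\bigl(t(T-t)\frac{d}{dt}\bigr)$ is the standard proof of the Legendre decay estimate.

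One fix is needed in item 2 for odd $\ell=2m+1$. Apply your weighted identity to $w=L^m v$, not to $v$; the coefficients of $L^m v$ are $(n(n+1))^m\langle v,Q_n\rangle$. For this, Bessel's inequality for the orthogonal family $\sqrt{t(T-t)}\,Q_n'$, which has squared norms $n(n+1)$, already suffices. It gives $\sum_n (n(n+1))^{2m+1}|\langle v,Q_n\rangle|^2\le\int_0^T t(T-t)|(L^m v)'|^2\,dt\le C\|v\|_{H^{2m+1}}^2$.
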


\begin{Remark}
The statements in Proposition~\ref{prop:basis-properties} follow from the results established in \cite{trong2025elastic}; see in particular Proposition~2.1, Lemma~2.1, and Lemma~2.2 there. The exponential factor in the definition $\Psi_n=e^tQ_n$ plays an important role. Indeed, without this factor, some time modes would have derivatives that vanish identically, which is undesirable in the time-reduction procedure.
\end{Remark}

The next proposition provides the counterpart, at the level of the first time derivative, of the second-derivative convergence result established in \cite{trong2025elastic}.

\begin{Proposition}
\label{prop:first_derivative_basis}
Let $p\ge 0$ and assume that
\[
u\in H^\ell\big((0,T);H^p(\Omega)\big)\qquad \text{for some }\ell\ge 3.
\]
Denote the Legendre-exponential coefficients of $u$ by
\[
u_n(\cdot):=\left\langle u(\cdot,\cdot),\Psi_n\right\rangle_{L^2_{e^{-2t}}(0,T)}
=\int_0^T e^{-2t}u(\cdot,t)\Psi_n(t)\,dt,\qquad n\ge 0.
\]
Then $u_t\in L^2\big((0,T);H^p(\Omega)\big)$ and
\begin{equation*}
\partial_t u(\cdot,t)=\sum_{n=0}^{\infty} u_n(\cdot)\,\Psi_n'(t)
\quad \text{in } L^2\big((0,T);H^p(\Omega)\big).
\end{equation*}
\end{Proposition}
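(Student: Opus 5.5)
The plan is to show that the partial sums $S_M(\cdot,t):=\sum_{n=0}^{M}u_n(\cdot)\Psi_n'(t)$ form a Cauchy sequence in $L^2((0,T);H^p(\Omega))$, and then to identify the limit with $\partial_t u$ by a distributional argument.

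\textbf{Step 1: regularity and expansion of $u$.} Since $\ell\ge 3\ge 1$, we have $u_t\in L^2((0,T);H^p(\Omega))$ immediately. Because $\{\Psi_n\}$ is an orthonormal basis of $L^2_{e^{-2t}}(0,T)$, and this space has a norm equivalent to that of $L^2(0,T)$, the expansion $u=\sum_n u_n\Psi_n$ holds in $L^2((0,T);H^p(\Omega))$. To justify this for Hilbert-valued functions, fix an orthonormal basis of $H^p(\Omega)$, or equivalently apply Parseval pointwise in $\x$ to derivatives up to order $p$ and integrate.

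\textbf{Step 2: the Cauchy estimate.} For $M<K$, the triangle inequality and \eqref{eq:psi_prime_growth} give
\[
\Big\|\sum_{n=M+1}^{K}u_n\Psi_n'\Big\|_{L^2((0,T);H^p)}\le C\sum_{n=M+1}^{K}n^{3/2}\|u_n\|_{H^p(\Omega)}
\le C\Big(\sum_{n>M} n^{-2}\Big)^{1/2}\Big(\sum_{n>M} n^{5}\|u_n\|_{H^p}^2\Big)^{1/2}.
\]
We need $\sum_n n^{2\ell}\|u_n\|^2_{H^p}\le C\|u\|^2_{H^\ell((0,T);H^p)}$ with $2\ell\ge 5$, which holds since $\ell\ge3$. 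This is the vector-valued version of \eqref{eq:coeff_decay}. We obtain it by applying \eqref{eq:coeff_decay} to the scalar functions $t\mapsto \langle u(\cdot,t),e_k\rangle_{H^p}$ for an orthonormal basis $\{e_k\}$ of $H^p(\Omega)$ and summing over $k$, using monotone convergence. The tail therefore tends to zero, so $S_M\to S$ in $L^2((0,T);H^p)$.

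\textbf{Step 3: identification $S=u_t$.} Take $\varphi\in C_c^\infty(0,T)$. We have
\[
\int_0^T S_M\varphi\,dt=-\int_0^T\Big(\sum_{n\le M}u_n\Psi_n\Big)\varphi'\,dt\;\longrightarrow\;-\int_0^T u\varphi'\,dt=\int_0^T u_t\varphi\,dt,
\]
as $M\to\infty$, using Step 1 for the convergence. The left-hand side also tends to $\int_0^T S\varphi\,dt$. Hence $S=u_t$ as distributions with values in $H^p$, and since both lie in $L^2((0,T);H^p)$, they agree in that space.

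\textbf{Main obstacle.} The main difficulty is the bookkeeping in Step 2. The triangle inequality loses orthogonality because the $\Psi_n'$ are not orthogonal, which forces the $n^{3/2}$ growth to be absorbed by the decay rate. The weighted sum must therefore use at least $n^{5+\epsilon}$, and this is exactly where $\ell\ge3$ is needed.
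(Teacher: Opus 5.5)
Your proof is correct and follows essentially the route the paper indicates (the paper defers to the second-derivative argument of its cited reference): you get convergence of $\sum_n u_n\Psi_n'$ in $L^2((0,T);H^p(\Omega))$ by combining \eqref{eq:psi_prime_growth} with the vector-valued form of \eqref{eq:coeff_decay} through Cauchy--Schwarz, which is exactly where $\ell\ge 3$ enters, and you then identify the limit with $u_t$ by testing against $C_c^\infty(0,T)$. The only slip is a harmless one in your closing remark: any weight $n^{4+\delta}$ with $\delta>0$ suffices (your own Step~2 uses $n^5$), so ``at least $n^{5+\epsilon}$'' overstates what is needed, although for integer $\ell$ this still gives the same threshold $\ell\ge 3$.
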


\begin{Remark}
The proof of Proposition~\ref{prop:first_derivative_basis} follows the same line of argument as the proof of the corresponding second-derivative result in \cite{trong2025elastic}. The condition $\ell\ge 3$ comes from combining the coefficient decay estimate \eqref{eq:coeff_decay} with the derivative bound \eqref{eq:psi_prime_growth}; see also \cite[Theorem 1]{van2025navierstokes} for further details.
\end{Remark}

\section{The time-dimensional reduction model} \label{sec3}

Let $\{\Psi_n\}_{n \geq 0}$ be the Legendre exponential-polynomial basis of $L^2(0,T)$, introduced in \cite{trong2025elastic}. We write
\begin{equation}\label{3}
u(\x,t)=\sum_{n=0}^\infty u_n(\x)\Psi_n(t)
\qquad \text{for } (\x,t)\in \Omega\times(0,T),
\end{equation}
where
\[
u_n(\x)=\int_0^T e^{-2t}u(\x,t)\Psi_n(t)\,dt.
\]
By Proposition \ref{prop:first_derivative_basis}, see also \cite[Theorem 1]{van2025navierstokes},
\begin{equation}\label{4}
u_t(\x,t)=\sum_{n=0}^\infty u_n(\x)\Psi_n'(t)
\qquad \text{for } (\x,t)\in \Omega\times(0,T).
\end{equation}
Plugging \eqref{3} and \eqref{4} into the Schr\"odinger equation \eqref{maineqn}, we obtain
\begin{equation}\label{5}
\ii \sum_{n=0}^\infty u_n(\x)\Psi_n'(t)
+ \sum_{n=0}^\infty \Delta u_n(\x)\Psi_n(t)
+ q(\x,t)\Big|\sum_{l=0}^\infty u_l(\x)\Psi_l(t)\Big|^{p-1}
\sum_{n=0}^\infty u_n(\x)\Psi_n(t)
=0
\end{equation}
for $(\x, t) \in \Omega \times (0, T).$

For each $m\ge 0$, multiply both sides of \eqref{5} by $e^{-2t}\Psi_m(t)$ and integrate over $t\in(0,T)$. Using the orthonormality relation
\[
\int_0^T e^{-2t}\Psi_n(t)\Psi_m(t)\,dt=\delta_{mn},
\]
and denoting
\[
\bu=
\begin{bmatrix}
u_0 & u_1 & \dots
\end{bmatrix}^{\top},
\]
we obtain
\begin{equation}\label{6}
\ii \sum_{n=0}^\infty s_{mn} u_n(\x)
+ \Delta u_m(\x)
+ \sum_{n=0}^\infty b_{mn}(\bu,\x)\,u_n(\x)
=0,
\end{equation}
where
\begin{align}
s_{mn}
&=
\int_0^T e^{-2t}\Psi_n'(t)\Psi_m(t)\,dt,\\
b_{mn}(\bu,\x)
&=
\int_0^T e^{-2t}q(\x,t)
\Big|\sum_{l=0}^\infty u_l(\x)\Psi_l(t)\Big|^{p-1}
\Psi_n(t)\Psi_m(t)\,dt.
\end{align}

Fix a cutoff number $N\ge 0$. By truncating the series in \eqref{6}, we approximate it by
\begin{equation}\label{9}
\ii \sum_{n=0}^N s_{mn} u_n(\x)
+ \Delta u_m(\x)
+ \sum_{n=0}^N b_{mn}^N(\bu^N,\x)\,u_n(\x)
=0,
\qquad m=0,1,\dots,N,
\end{equation}
where
\begin{align}
\bu^N
&=
\begin{bmatrix}
u_0 & u_1 & \dots & u_N
\end{bmatrix}^{\top},\\
b_{mn}^N(\bu^N,\x)
&=
\int_0^T e^{-2t}q(\x,t)
\Big|\sum_{l=0}^N u_l(\x)\Psi_l(t)\Big|^{p-1}
\Psi_n(t)\Psi_m(t)\,dt.
\end{align}

Equation \eqref{9} serves as the time-dimensional reduction model, which approximates the original time-dependent Schr\"odinger equation. We next compute the boundary conditions for $u_m$.

Using \eqref{3} and the homogeneous Dirichlet boundary condition in \eqref{maineqn}, we have
\[
\sum_{n=0}^\infty u_n(\x)\Psi_n(t)=0
\qquad \text{for } (\x,t)\in \partial\Omega\times(0,T).
\]
Multiplying both sides by $e^{-2t}\Psi_m(t)$ and integrating over $(0,T)$, we obtain
\begin{equation}\label{bc-Dirichlet-um}
u_m(\x)=0
\qquad \text{for } \x\in\partial\Omega,\quad m\ge 0.
\end{equation}

Next, differentiating \eqref{3} in the outward normal direction yields
\[
\partial_\nu u(\x,t)=\sum_{n=0}^\infty \partial_\nu u_n(\x)\Psi_n(t)
\qquad \text{for } (\x,t)\in \partial\Omega\times(0,T).
\]
Using the Neumann data \eqref{data}, we obtain
\[
f(\x,t)=\sum_{n=0}^\infty \partial_\nu u_n(\x)\Psi_n(t)
\qquad \text{for } (\x,t)\in \partial\Omega\times(0,T).
\]
Multiplying both sides by $e^{-2t}\Psi_m(t)$ and integrating over $(0,T)$, we arrive at
\begin{equation}\label{bc-Neumann-um}
\partial_\nu u_m(\x)=f_m(\x)
\qquad \text{for } \x\in\partial\Omega,\quad m\ge 0,
\end{equation}
where
\begin{equation}\label{def-fm}
f_m(\x):=\int_0^T e^{-2t}f(\x,t)\Psi_m(t)\,dt.
\end{equation}

Hence, for each $m=0,1,\dots,N$, the function $u_m$ satisfies the boundary conditions \eqref{bc-Dirichlet-um} and \eqref{bc-Neumann-um} on $\partial\Omega$. Therefore, combining \eqref{9}, \eqref{bc-Dirichlet-um}, and \eqref{bc-Neumann-um}, we obtain the following coupled elliptic system:
\begin{equation}\label{15}
\begin{cases}
\ii \displaystyle\sum_{n=0}^N s_{mn} u_n(\x)
+ \Delta u_m(\x)
+ \displaystyle\sum_{n=0}^N b_{mn}^N(\bu^N,\x)\,u_n(\x)
=0, & \x \in \Omega,\\
u_m(\x) = 0, & \x \in \partial\Omega,\\
\partial_{\nu} u_m(\x) = f_m(\x), & \x \in \partial\Omega,
\end{cases}
\qquad m=0,1,\dots,N.
\end{equation}
Solving system \eqref{15} is the next step of our method.
\begin{Remark}
System \eqref{15} will be referred to as the {\bf time-dimensional reduction model}. It provides an approximate reduction of the original time-dependent Schr\"odinger problem by eliminating the explicit time variable through the truncated expansion \eqref{3}. As a result, instead of working on the $(d+1)$-dimensional space-time domain $\Omega\times(0,T)$, one only needs to solve a coupled system on the $d$-dimensional spatial domain $\Omega$. This reduction significantly decreases the computational cost.

In addition, the derivation of \eqref{15} involves truncating the expansion to the first $N+1$ modes. Therefore, the high-oscillation components of the data are discarded. This truncation acts as a filtering step and can help reduce the influence of noise in practical computations.
\end{Remark}

\section{A Carleman-contraction principle for the time-dimensional \\reduction model}\label{sec4}

Let
$
s>\frac d2+2.
$
Then, by the Sobolev embedding theorem,
\[
H^s(\Omega)\hookrightarrow C^2(\overline{\Omega}),
\]
and in particular
\[
H^s(\Omega)\hookrightarrow L^\infty(\Omega).
\]
Moreover, the mappings
\[
u\mapsto \Delta u,\qquad
u\mapsto u|_{\partial\Omega},\qquad
u\mapsto \partial_\nu u|_{\partial\Omega}
\]
are continuous from $H^s(\Omega)$ into $L^2(\Omega)$, $L^2(\partial\Omega)$, and
$L^2(\partial\Omega)$, respectively.
We seek a solution to \eqref{15} in the admissible set
\[
H=\left\{
\bm{\varphi}\in [H^s(\Omega)]^{N+1}: \bm{\varphi}|_{\partial \Omega} = \mathbf{0} \mbox{ and }
\|\bm{\varphi}\|_{[L^\infty(\Omega)]^{N+1}} \le M
\right\},
\]
where $M>0$ is a fixed constant chosen sufficiently large.

\begin{Remark}
The restriction to the admissible set $H$ is imposed as an a priori regularity assumption on the exact coefficient vector. More precisely, we assume that the exact coefficient vector belongs to $[H^s(\Omega)]^{N+1}$. Hence, for $M$ sufficiently large, it belongs to $H$. Therefore, $H$ should be viewed as a natural class of physically meaningful solutions rather than as a restrictive assumption.
\end{Remark}

Fix $\beta \geq \beta_0$ where $\beta_0$ is as in Lemma \ref{thmCarpointwise}. For each $\lambda > \lambda_0$, where $\lambda_0$ is also defined in Lemma \ref{thmCarpointwise}, and $\epsilon > 0$,
for each $\bm {\varphi} = \begin{bmatrix}
	\varphi_0 &\varphi_1 & \dots &\varphi_N
\end{bmatrix}^\top \in H$, define
\begin{multline}
J_{\bm{\varphi}}^{\lambda, \epsilon}(\bu)
:=
\sum_{m=0}^N \Bigg[
\int_\Omega e^{2\lambda r^{-\beta}}
\Big|
\ii \sum_{n=0}^N s_{mn}u_n(\x)
+\Delta u_m(\x)
+\sum_{n=0}^N b_{mn}^N(\bm{\varphi},\x)\,\varphi_n(\x)
\Big|^2\,d\x
\\
+\lambda^3\int_{\partial\Omega} e^{2\lambda r^{-\beta}} |\partial_\nu u_m-f_m|^2\,d\sigma(\x)
+\epsilon \|u_m\|_{H^s(\Omega)}^2
\Bigg].
\label{eq:frozen_functional}
\end{multline}

\begin{Proposition}\label{prop:well_defined}
Given $\lambda>\lambda_0$ and $\epsilon>0$, the functional $J_{\bm{\varphi}}^{\lambda,\epsilon}$ admits a unique minimizer on $H$ for every $\bm{\varphi}\in H$.
\end{Proposition}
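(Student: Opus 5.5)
The plan is to treat $J_{\bm{\varphi}}^{\lambda,\epsilon}$ as a strictly convex, coercive, continuous quadratic functional on a closed convex subset of the Hilbert space $[H^s(\Omega)]^{N+1}$, and then invoke the standard existence and uniqueness theorem for minimizers of such functionals. Since $\bm{\varphi}\in H$ is frozen, the term $g_m(\x):=\sum_{n=0}^N b_{mn}^N(\bm{\varphi},\x)\varphi_n(\x)$ does not depend on $\bu$. Using $\|\bm{\varphi}\|_{L^\infty}\le M$, $q\in L^\infty$ and the boundedness of $\Psi_n$ on $[0,T]$, we get $|b^N_{mn}(\bm{\varphi},\x)|\le C\|q\|_{L^\infty}((N+1)M\max_l\|\Psi_l\|_\infty)^{p-1}$, so $g_m\in L^\infty(\Omega)\subset L^2(\Omega)$. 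Then the integrand $\ii\sum_n s_{mn}u_n+\Delta u_m+g_m$ is an affine, continuous map $[H^s(\Omega)]^{N+1}\to L^2(\Omega)$; here we use that $\Delta:H^s\to L^2$ is bounded and that the weight $e^{2\lambda r^{-\beta}}$ is bounded above and below on $\overline\Omega$ because $\x_0\notin\overline\Omega$. Likewise $\bu\mapsto \partial_\nu u_m-f_m$ is affine and continuous into $L^2(\partial\Omega)$, assuming $f_m\in L^2(\partial\Omega)$. Hence $J:=J^{\lambda,\epsilon}_{\bm{\varphi}}$ has the form $J(\bu)=\|L\bu-F\|^2_{\mathcal Y}+\epsilon\|\bu\|^2_{[H^s]^{N+1}}$, where $L$ is a bounded linear operator into a weighted Hilbert space $\mathcal Y$. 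In particular $J$ is continuous and convex.

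Strict convexity and coercivity both come from the regularization term. For $\bu\neq\bv$ and $\theta\in(0,1)$, the first part is convex and $\epsilon\|\cdot\|^2$ is strictly convex, so $J$ is strictly convex. Moreover $J(\bu)\ge\epsilon\|\bu\|^2_{H^s}$, so $J$ is coercive. One can also write the parallelogram identity $J(\tfrac{\bu+\bv}{2})=\tfrac12J(\bu)+\tfrac12J(\bv)-\tfrac14\|L(\bu-\bv)\|^2-\tfrac{\epsilon}{4}\|\bu-\bv\|^2$, which later gives uniqueness directly.

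Next I check that $H$ is nonempty, convex, and closed in $[H^s(\Omega)]^{N+1}$. It contains $\mathbf 0$. It is convex because the trace condition is linear and the $L^\infty$-ball is convex. It is closed because the trace map is continuous and, by the embedding $H^s\hookrightarrow L^\infty$, $H^s$-convergence implies uniform convergence, which preserves the bound $M$. A closed convex set is weakly closed, and $J$, being convex and continuous, is weakly lower semicontinuous.

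Existence then follows by the direct method. Take a minimizing sequence in $H$. It is bounded in $H^s$ by coercivity, so it has a weakly convergent subsequence whose limit lies in $H$ by weak closedness; weak lower semicontinuity makes that limit a minimizer. Alternatively, the parallelogram identity shows the minimizing sequence is Cauchy in $H^s$, which avoids weak compactness altogether. Uniqueness follows from strict convexity: if $\bu\ne\bv$ were two minimizers, their midpoint would lie in $H$ and give a strictly smaller value. I expect the only delicate step to be justifying that the frozen nonlinear term lies in $L^2$, with its bound depending on $M$. This uses the $L^\infty$ constraint in $H$ together with the measurability of $b^N_{mn}$; the latter follows from Fubini's theorem, since the integrand is measurable in $(\x,t)$.
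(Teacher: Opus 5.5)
Your proposal is correct and follows essentially the same route as the paper: the direct method on the nonempty, closed, convex (hence weakly closed) set $H$, with coercivity from the term $\epsilon\|\bu\|_{[H^s(\Omega)]^{N+1}}^2$, weak lower semicontinuity, and uniqueness from strict convexity. Your parallelogram-identity remark adds a small bonus: it shows a minimizing sequence is Cauchy in $[H^s(\Omega)]^{N+1}$, so weak compactness is not even needed.
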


\begin{proof}
The existence of a minimizer follows from the direct method in the calculus of variations; see, for example, \cite{deimling1985, zeidler1985}.
Fix $\bm{\varphi}\in H$. Clearly, $H$ is nonempty, since $0\in H$. In addition, the continuity of the embedding $H^s(\Omega)\hookrightarrow L^\infty(\Omega)$ implies that $H$ is a closed and convex subset of $[H^s(\Omega)]^{N+1}$. Since $[H^s(\Omega)]^{N+1}$ is a Hilbert space, $H$ is also weakly closed.

Since $\bm{\varphi}\in H$ and $q\in L^\infty(\Omega\times(0,T))$, we have $b_{mn}^N(\bm{\varphi},\cdot)\in L^\infty(\Omega)$ for all $m,n=0,\dots,N$. Moreover, since $s>\frac d2+2$, the operators $\Delta$, the Dirichlet trace, and the Neumann trace are continuous on $H^s(\Omega)$. Also, since $\x_0\notin \overline{\Omega}$, the function $r(\x)=|\x-\x_0|$ is bounded above and below by positive constants on $\overline{\Omega}$. Hence the weight $e^{2\lambda r^{-\beta}}$ is bounded above and below by positive constants on $\Omega$ and on $\partial\Omega$. Therefore, $J_{\bm{\varphi}}^{\lambda,\epsilon}$ is well defined on $H$.

We first prove the existence of a minimizer. Since every term in $J_{\bm{\varphi}}^{\lambda,\epsilon}$ is nonnegative,
\[
J_{\bm{\varphi}}(\bu)\ge \epsilon \sum_{m=0}^N \|u_m\|_{H^s(\Omega)}^2
= \epsilon \|\bu\|_{[H^s(\Omega)]^{N+1}}^2
\qquad \text{for all } \bu\in H.
\]
Thus $J_{\bm{\varphi}}^{\lambda,\epsilon}$ is coercive on $H$. Let $\alpha:=\inf_{\bu\in H} J_{\bm{\varphi}}^{\lambda,\epsilon}(\bu)$, and let $\{\bu^k\}_{k=1}^\infty\subset H$ be a minimizing sequence such that $J_{\bm{\varphi}}^{\lambda,\epsilon}(\bu^k)\to \alpha$ as $k\to\infty$. By coercivity, $\{\bu^k\}_{k = 1}^\infty$ is bounded in $[H^s(\Omega)]^{N+1}$. Since $[H^s(\Omega)]^{N+1}$ is a Hilbert space, there exist a subsequence, still denoted by $\{\bu^k\}_{k = 1}^\infty$, and an element $\bu^*\in [H^s(\Omega)]^{N+1}$ such that
\[
\bu^k\rightharpoonup \bu^*
\qquad \text{weakly in } [H^s(\Omega)]^{N+1}.
\]
Because $H$ is weakly closed, we have $\bu^*\in H$.

The operators $\Delta$, the Dirichlet trace, and the Neumann trace are continuous and linear with respect to $\bu$. Therefore, under the weak convergence above, the corresponding images converge weakly in their target spaces. It follows from the weak lower semicontinuity of the norm that
\[
J_{\bm{\varphi}}^{\lambda,\epsilon}(\bu^*)
\le \liminf_{k\to\infty} J_{\bm{\varphi}}^{\lambda,\epsilon}(\bu^k)
= \alpha.
\]
Hence $\bu^*$ is a minimizer of $J_{\bm{\varphi}}^{\lambda,\epsilon}$ on $H$.
The uniqueness of the minimizer follows from the strict convexity of $J_{\bm{\varphi}}^{\lambda,\epsilon}$ on the convex set $H$.
\end{proof}

 Define $F_{\lambda, \epsilon}:H\to H$ by letting $F(\bm{\varphi})$ be the unique minimizer of $J_{\bm{\varphi}}^{\lambda,\epsilon}$ on $H$, that is,
\[
F_{\lambda, \epsilon}(\bm{\varphi}) := \underset{\bu\in H}{\operatorname{argmin}}\, J_{\bm{\varphi}}^{\lambda, \epsilon}(\bu).
\]
The well-definedness of $F_{\lambda, \epsilon}$ follows from Proposition \ref{prop:well_defined}.
We next show that when $\lambda$ is sufficiently large, the map $F_{\lambda, \epsilon}$ is contractive with respect to the norm
\begin{equation}\label{norm}
    \|\bu\|_{\lambda, \epsilon}^2
    = \int_\Omega e^{2\lambda r^{-\beta}}
|
 \bu 
|^2\,d\x
+2 \int_{\partial\Omega} e^{2\lambda r^{-\beta}} |\partial_\nu \bu |^2\,d\sigma(\x)
+\frac{2\epsilon}{\lambda^3} \|\bu\|_{[H^s(\Omega)]^{N + 1}}^2
\end{equation}

\begin{Theorem}\label{thm:contraction}	
	Let $\epsilon>0$. Fix $\beta > \beta_0$. There exist $\lambda_1$ and $C$ depending only on $\Omega$, $\beta$, $s$, $N$, $p$, $T$, $\|q\|_{L^\infty(\Omega\times(0,T))}$, $\{\Psi_\ell\}_{\ell=0}^N$, $\{s_{mn}\}_{m,n=0}^N$, and $M$ such that for all $\lambda > \lambda_1$, we have
	\begin{equation}\label{contraction}
\|F_{\lambda, \epsilon}(\bm \varphi) - F_{\lambda, \epsilon}(\bm \psi)\|_{\lambda, \epsilon} \leq \sqrt{\frac{C}{\lambda^3}} \|\bm \varphi - \bm \psi\|_{\lambda, \epsilon}
	\end{equation}	
	for all $\bm \varphi$ and $\bm \psi$ in $H$.
 \end{Theorem}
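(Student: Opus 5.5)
The plan is the standard variational-inequality argument for Carleman contraction maps. Set $\bu=F_{\lambda,\epsilon}(\bm\varphi)$, $\bv=F_{\lambda,\epsilon}(\bm\psi)$ and $\bh=\bv-\bu$. Since $H$ is convex and $\bu,\bv$ minimize the strictly convex quadratic functionals $J_{\bm\varphi}^{\lambda,\epsilon}$ and $J_{\bm\psi}^{\lambda,\epsilon}$ on $H$, the first-order optimality conditions give
\[
\mathrm{Re}\,\langle DJ_{\bm\varphi}^{\lambda,\epsilon}(\bu),\bh\rangle\ge 0,
\qquad
\mathrm{Re}\,\langle DJ_{\bm\psi}^{\lambda,\epsilon}(\bv),-\bh\rangle\ge 0 .
\]
Write $L_m\bu=\ii\sum_n s_{mn}u_n+\Delta u_m$ and $G_m(\bm\varphi)=\sum_n b_{mn}^N(\bm\varphi,\x)\varphi_n$. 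Adding the two inequalities and using that $J$ is quadratic in $\bu$ yields, with $\mathcal B = G(\bm\psi)-G(\bm\varphi)$,
\begin{multline*}
\sum_{m}\Big[\int_\Omega e^{2\lambda r^{-\beta}}|L_m\bh|^2\,d\x+\lambda^3\int_{\partial\Omega}e^{2\lambda r^{-\beta}}|\partial_\nu h_m|^2\,d\sigma+\epsilon\|h_m\|_{H^s(\Omega)}^2\Big]\\
\le -\mathrm{Re}\sum_m\int_\Omega e^{2\lambda r^{-\beta}}\,\mathcal B_m\,\overline{L_m\bh}\,d\x .
\end{multline*}
Applying Cauchy--Schwarz with $ab\le \tfrac12 a^2+\tfrac12 b^2$ to the right-hand side absorbs half of the $|L\bh|^2$ term, leaving
\[
\tfrac12\int_\Omega e^{2\lambda r^{-\beta}}|L\bh|^2+\lambda^3\int_{\partial\Omega}e^{2\lambda r^{-\beta}}|\partial_\nu\bh|^2+\epsilon\|\bh\|_{H^s}^2\le \tfrac12\int_\Omega e^{2\lambda r^{-\beta}}|\mathcal B|^2 .
\]

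Next I bound both sides. For the right-hand side I need a pointwise Lipschitz estimate on the $L^\infty$-ball: $|G(\bm\psi)-G(\bm\varphi)|\le C|\bm\psi-\bm\varphi|$ pointwise, since $\|\bm\varphi\|_\infty,\|\bm\psi\|_\infty\le M$. With $w_{\bm\varphi}(\x,t)=\sum_l\varphi_l\Psi_l(t)$ we have
\[
G_m(\bm\varphi)=\int_0^T e^{-2t}q\,|w_{\bm\varphi}|^{p-1}w_{\bm\varphi}\,\Psi_m\,dt .
\]
The map $z\mapsto|z|^{p-1}z$ is locally Lipschitz on $\mathbb C$ when $p>1$, with $\big||z|^{p-1}z-|y|^{p-1}y\big|\le p\max(|z|,|y|)^{p-1}|z-y|$. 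Moreover $|w|\le M\sum_l\|\Psi_l\|_\infty$. Hence $C$ depends on $M,p,N,T,\|q\|_\infty$ and the $\Psi_l$. This step is exactly where membership in $H$ replaces the global Lipschitz hypothesis of earlier work, and I expect it to be the main point needing care. The remaining steps are routine.

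For the left-hand side, each $h_m$ vanishes on $\partial\Omega$ because $\bu,\bv\in H$, so Corollary~\ref{cor:carleman_laplacian} in the form \eqref{3.42} applies to $\Delta h_m=L_m\bh-\ii\sum_n s_{mn}h_n$. Using $|a-b|^2\ge\tfrac12|a|^2-|b|^2$ gives
\[
\int e^{2\lambda r^{-\beta}}|L\bh|^2\ge \tfrac12\int e^{2\lambda r^{-\beta}}|\Delta\bh|^2-C\int e^{2\lambda r^{-\beta}}|\bh|^2,
\]
and \eqref{3.42} bounds the first term below by $C\lambda^3\int e^{2\lambda r^{-\beta}}|\bh|^2$ minus $C\lambda\int_{\partial\Omega}e^{2\lambda r^{-\beta}}|\partial_\nu\bh|^2$. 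The lower-order term $C\int e^{2\lambda r^{-\beta}}|\bh|^2$ is absorbed for $\lambda$ large. The boundary term $C\lambda$ is dominated by the $\lambda^3$ boundary term already on the left.

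Combining these, for $\lambda>\lambda_1$ I get
\[
\lambda^3\Big[\int_\Omega e^{2\lambda r^{-\beta}}|\bh|^2+\int_{\partial\Omega}e^{2\lambda r^{-\beta}}|\partial_\nu\bh|^2+\tfrac{\epsilon}{\lambda^3}\|\bh\|_{H^s}^2\Big]\le C\int_\Omega e^{2\lambda r^{-\beta}}|\bm\psi-\bm\varphi|^2,
\]
after suitably splitting constants to match the weights $1,2,2\epsilon/\lambda^3$ in \eqref{norm}, possibly renormalizing $C$. The right-hand side is at most $C\|\bm\varphi-\bm\psi\|_{\lambda,\epsilon}^2$. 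Dividing by $\lambda^3$ gives \eqref{contraction}.
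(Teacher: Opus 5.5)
Your proposal is correct and follows essentially the same route as the paper's proof. The shared steps are the two variational inequalities from the convexity of $H$, absorption via $ab\le\frac12(a^2+b^2)$, the pointwise Lipschitz bound on the frozen nonlinearity coming from the $L^\infty$ bound $M$, the inequality $|a-b|^2\ge\frac12|a|^2-|b|^2$, and the Carleman estimate \eqref{3.42} with $\lambda$ large enough to absorb the lower-order bulk term and the $C\lambda$ boundary term. Your explicit Lipschitz estimate $\bigl||z|^{p-1}z-|y|^{p-1}y\bigr|\le p\max(|z|,|y|)^{p-1}|z-y|$ makes precise a step that the paper only asserts.
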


\begin{proof}
Throughout the proof, $C>0$ denotes a generic constant depending only on the parameters in the statement of the theorem.
    The constant $C$ might vary from estimate to estimate.

Let $\bm{\varphi} = \begin{bmatrix}
	\varphi_0 & \dots &\varphi_N
\end{bmatrix}^{\top}$ and $\bm{\psi} = \begin{bmatrix}
	\psi_0 & \dots &\Psi_N
\end{bmatrix}^{\top}$ be two vector-valued functions in $H$.  Set
\[
    \bu = F_{\lambda, \epsilon}(\bm{\varphi}),
    \quad
    \bv = F_{\lambda, \epsilon}(\bm{\psi}), \quad \mbox{and } 
    \quad\bh = \bu-\bv.
\]
Since $H$ is convex, for all $\theta\in(0,1)$,
\[
\theta \bv+(1-\theta)\bu=\bu-\theta(\bu-\bv)=\bu-\theta\bh\in H.
\]
Since $\bu$ is the minimizer of $J_{\bm{\varphi}}^{\lambda,\epsilon}$, for all $\theta\in(0,1)$, writing \[\bu = \begin{bmatrix}
	u_0 & \dots &u_N
\end{bmatrix}^{\top} \quad \mbox{and} \quad \bv = \begin{bmatrix}
	v_0 & \dots &v_N
\end{bmatrix}^{\top}\] gives
\begin{align*}
0
&\le
\frac{J_{\bm{\varphi}}^{\lambda,\epsilon}(\bu-\theta\bh)-J_{\bm{\varphi}}^{\lambda,\epsilon}(\bu)}{\theta} \\
&=
\frac{1}{\theta}\sum_{m=0}^N \Bigg[
\Big\|
e^{\lambda r^{-\beta}}
\Big(
\ii \sum_{n=0}^N s_{mn}(u_n-\theta h_n)
+\Delta (u_m-\theta h_m)
+\sum_{n=0}^N b_{mn}^N(\bm{\varphi},\x)\varphi_n\Big)
\Big\|_{L^2(\Omega)}^2
\\ 
&
-
\Big\|
e^{\lambda r^{-\beta}}
\Big(
\ii \sum_{n=0}^N s_{mn}u_n
+\Delta u_m
+\sum_{n=0}^N b_{mn}^N(\bm{\varphi},\x)\varphi_n
\Big)
\Big\|_{L^2(\Omega)}^2
\\&
+\lambda^3\Big(
\|e^{\lambda r^{-\beta}}(\partial_\nu u_m-f_m-\theta \partial_\nu h_m)\|_{L^2(\partial\Omega)}^2
-
\|e^{\lambda r^{-\beta}}(\partial_\nu u_m-f_m)\|_{L^2(\partial\Omega)}^2
\Big) \\
&\hspace{8cm}
+\epsilon\Big(
\|u_m-\theta h_m\|_{H^s(\Omega)}^2-\|u_m\|_{H^s(\Omega)}^2
\Big)
\Bigg].
\end{align*}
Expanding each square and letting $\theta\to0^+$, we obtain
\begin{multline}\label{4.3}
\sum_{m=0}^N \Bigg[
\operatorname{Re}\Big\langle
e^{2\lambda r^{-\beta}}
\Big(
\ii \sum_{n=0}^N s_{mn}u_n
+\Delta u_m
+\sum_{n=0}^N b_{mn}^N(\bm{\varphi},\x)\,\varphi_n
\Big),
\ii \sum_{n=0}^N s_{mn}h_n
+\Delta h_m
\Big\rangle_{L^2(\Omega)}
\\+\lambda^3 \operatorname{Re}\langle e^{2\lambda r^{-\beta}}(\partial_\nu u_m-f_m),\partial_\nu h_m\rangle_{L^2(\partial\Omega)}
+\epsilon \operatorname{Re}\langle u_m,h_m\rangle_{H^s(\Omega)}
\Bigg]
\le 0.
\end{multline}
Similarly, interchanging the roles of $\bu$ and $\bv$, we obtain
\begin{multline}\label{4.4}
\sum_{m=0}^N \Bigg[
\operatorname{Re}\Big\langle
e^{2\lambda r^{-\beta}}
\Big(
\ii \sum_{n=0}^N s_{mn}v_n
+\Delta v_m
+\sum_{n=0}^N b_{mn}^N(\bm{\psi},\x)\,\psi_n
\Big),
\ii \sum_{n=0}^N s_{mn}h_n
+\Delta h_m
\Big\rangle_{L^2(\Omega)}
\\
+\lambda^3 \operatorname{Re}\langle e^{2\lambda r^{-\beta}}(\partial_\nu v_m-f_m),\partial_\nu h_m\rangle_{L^2(\partial\Omega)}
+\epsilon \operatorname{Re}\langle v_m,h_m\rangle_{H^s(\Omega)}
\Bigg]
\ge 0.
\end{multline}
Subtracting \eqref{4.4} from \eqref{4.3}, we obtain
\begin{multline}\label{4.5}
\sum_{m=0}^N \Bigg[
\operatorname{Re}\Big\langle
e^{2\lambda r^{-\beta}}
\Big( 
\ii \sum_{n=0}^N s_{mn}h_n
+\Delta h_m
+\sum_{n=0}^N b_{mn}^N(\bm{\varphi},\x)\,\varphi_n
-\sum_{n=0}^N b_{mn}^N(\bm{\psi},\x)\,\psi_n
\Big),
\\
\ii \sum_{n=0}^N s_{mn}h_n
+\Delta h_m
\Big\rangle_{L^2(\Omega)}
+\lambda^3 \int_{\partial\Omega} e^{2\lambda r^{-\beta}} |\partial_\nu h_m|^2\,d\sigma(\x)
+\epsilon \|h_m\|_{H^s(\Omega)}^2
\Bigg]
\le 0.
\end{multline}
Rearranging \eqref{4.5} and moving the frozen nonlinear term to the right-hand side, we obtain
\begin{multline}\label{4.6}
\sum_{m=0}^N \Bigg[
\int_\Omega e^{2\lambda r^{-\beta}}
\Big|
\ii \sum_{n=0}^N s_{mn}h_n+\Delta h_m
\Big|^2\,d\x
+\lambda^3 \int_{\partial\Omega} e^{2\lambda r^{-\beta}} |\partial_\nu h_m|^2\,d\sigma(\x)
+\epsilon \|h_m\|_{H^s(\Omega)}^2
\Bigg]
\\
\le
-\sum_{m=0}^N
\operatorname{Re}\Bigg\langle
e^{2\lambda r^{-\beta}}
\Big(
\sum_{n=0}^N b_{mn}^N(\bm{\varphi},\x)\,\varphi_n
-\sum_{n=0}^N b_{mn}^N(\bm{\psi},\x)\,\psi_n
\Big),
\ii \sum_{n=0}^N s_{mn}h_n+\Delta h_m
\Bigg\rangle_{L^2(\Omega)}.
\end{multline}
Applying the inequality $ab \le \frac12(a^2+b^2)$ to the right-hand side of \eqref{4.6}, we obtain
\begin{multline}\label{4.7}
\sum_{m=0}^N \Bigg[
\int_\Omega e^{2\lambda r^{-\beta}}
\Big|
\ii \sum_{n=0}^N s_{mn}h_n+\Delta h_m
\Big|^2\,d\x
+2\lambda^3 \int_{\partial\Omega} e^{2\lambda r^{-\beta}} |\partial_\nu h_m|^2\,d\sigma(\x)
+2\epsilon \|h_m\|_{H^s(\Omega)}^2
\Bigg]
\\
\le
 \sum_{m=0}^N \int_\Omega e^{2\lambda r^{-\beta}}
\Big|
\sum_{n=0}^N b_{mn}^N(\bm{\varphi},\x)\,\varphi_n
-\sum_{n=0}^N b_{mn}^N(\bm{\psi},\x)\,\psi_n
\Big|^2\,d\x
\end{multline}
Since $H$ is bounded in $[L^\infty(\Omega)]^{N+1}$, the map
\[
\bm\zeta \mapsto \sum_{n=0}^N b_{mn}^N(\bm\zeta,\x)\,\zeta_n
\]
is locally Lipschitz on $H$, uniformly in $\x\in\Omega$. Hence,\[
\left|
\sum_{n=0}^N b_{mn}^N(\bm{\varphi},\x)\,\varphi_n
-\sum_{n=0}^N b_{mn}^N(\bm{\psi},\x)\,\psi_n
\right|
\le C |\bm\varphi(\x)-\bm\psi(\x)|.
\]
It follows from \eqref{4.7} that
\begin{multline}\label{4.8}
\sum_{m=0}^N \Bigg[
\int_\Omega e^{2\lambda r^{-\beta}}
\Big|
\ii \sum_{n=0}^N s_{mn}h_n+\Delta h_m
\Big|^2\,d\x
+2\lambda^3 \int_{\partial\Omega} e^{2\lambda r^{-\beta}} |\partial_\nu h_m|^2\,d\sigma(\x)
+2\epsilon \|h_m\|_{H^s(\Omega)}^2
\Bigg]
\\
\le
C \int_\Omega e^{2\lambda r^{-\beta}} |\bm\varphi-\bm\psi|^2\,d\x.
\end{multline}
Using the inequality $|a - b|^2 \geq\frac 12 |a|^2 - |b|^2$ gives
\begin{multline}\label{4.9}
\sum_{m=0}^N \Bigg[
\frac 12\int_\Omega e^{2\lambda r^{-\beta}}
|
\Delta h_m
|^2\,d\x
-C \int_\Omega e^{2\lambda r^{-\beta}}
|
 h_m
|^2\,d\x
\\
+2\lambda^3 \int_{\partial\Omega} e^{2\lambda r^{-\beta}} |\partial_\nu h_m|^2\,d\sigma(\x)
+2\epsilon \|h_m\|_{H^s(\Omega)}^2
\Bigg]
\le
C \int_\Omega e^{2\lambda r^{-\beta}} |\bm\varphi-\bm\psi|^2\,d\x.
\end{multline}

By applying the Carleman estimate \eqref{3.42} with $u=h_m$, we find
\begin{multline}\label{4.10}
\int_{\Omega} e^{2\lambda r^{-\beta}} |\Delta h_m|^2\,d\x
\\
\ge
C \int_{\Omega} e^{2\lambda r^{-\beta}}
\bigl[
\lambda^3 |h_m|^2+\lambda |\nabla h_m|^2
\bigr]\,d\x
-
C\lambda\int_{\partial\Omega} e^{2\lambda r^{-\beta}}|\partial_{\nu} h_m|^2 d\sigma(\x).
\end{multline}
Combining \eqref{4.9} and \eqref{4.10} gives
\begin{multline}\label{4.11}
 \lambda^3 \int_\Omega e^{2\lambda r^{-\beta}}
|
 \bh
|^2\,d\x
\\
+2\lambda^3 \int_{\partial\Omega} e^{2\lambda r^{-\beta}} |\partial_\nu \bh|^2\,d\sigma(\x)
+2\epsilon \|\bh\|_{[H^s(\Omega)]^{N + 1}}^2
\le
C \int_\Omega e^{2\lambda r^{-\beta}} |\bm\varphi-\bm\psi|^2\,d\x.
\end{multline}
Add the nonlinear term \[2C\int_{\partial\Omega} e^{2\lambda r^{-\beta}} |\partial_\nu (\bm\varphi-\bm\psi)|^2\,d\sigma(\x)
+\frac{2C\epsilon}{\lambda^3} \|\bm\varphi-\bm\psi\|_{[H^s(\Omega)]^{N + 1}}^2\] into the right hand side of \eqref{4.11} and recall $\bh = \bu - \bv$. We obtain
\begin{multline}
 \lambda^3 \Big[\int_\Omega e^{2\lambda r^{-\beta}}
|
 \bu - \bv
|^2\,d\x
+2 \int_{\partial\Omega} e^{2\lambda r^{-\beta}} |\partial_\nu (\bu - \bv)|^2\,d\sigma(\x)
+\frac{2\epsilon}{\lambda^3} \|\bu - \bv\|_{[H^s(\Omega)]^{N + 1}}^2
\\
\le
C \Big[\int_\Omega e^{2\lambda r^{-\beta}} |\bm\varphi-\bm\psi|^2\,d\x
+ 2 \int_{\partial\Omega} e^{2\lambda r^{-\beta}} |\partial_\nu (\bm\varphi-\bm\psi)|^2\,d\sigma(\x)
+\frac{2\epsilon}{\lambda^3} \|\bm\varphi-\bm\psi\|_{[H^s(\Omega)]^{N + 1}}^2\Big].
\end{multline}
Estimate \eqref{contraction} follows.
\end{proof}

\begin{Corollary}\label{cor:picard}
Let $\epsilon>0$, and let $\lambda\ge \lambda_1$, where $\lambda_1$ is as in Theorem \ref{thm:contraction}. For an arbitrary initial guess $\bu^{(0)}\in H$, define the Picard iteration
\[
\bu^{(k+1)}=F_{\lambda,\epsilon}(\bu^{(k)}), \qquad k=0,1,2,\dots.
\]
Then the sequence $\{\bu^{(k)}\}_{k=0}^\infty$ converges in $(H,\|\cdot\|_{\lambda,\epsilon})$ to a unique fixed point $\bu_{\lambda,\epsilon}\in H$ satisfying
\[
F_{\lambda,\epsilon}(\bu_{\lambda,\epsilon})=\bu_{\lambda,\epsilon}.
\]
More precisely, $\bu_{\lambda,\epsilon}$ is the unique minimizer of the functional $J_{\bu_{\lambda,\epsilon}}^{\lambda,\epsilon}$ over the admissible set $H$.

In addition, if $\mu \in(0,1)$ denotes the contraction constant of $F_{\lambda,\epsilon}$, then
\[
\|\bu^{(k)}-\bu_{\lambda,\epsilon}\|_{\lambda,\epsilon}
\le \mu ^k \|\bu^{(0)}-\bu_{\lambda,\epsilon}\|_{\lambda,\epsilon},
\qquad k\ge 0.
\]
\end{Corollary}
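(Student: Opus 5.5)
This is the Banach fixed-point theorem applied to $F_{\lambda,\epsilon}$ on the metric space $(H,\|\cdot\|_{\lambda,\epsilon})$. First, I will check that $\|\cdot\|_{\lambda,\epsilon}$ is a norm on $[H^s(\Omega)]^{N+1}$ equivalent to the standard $H^s$ norm for fixed $\lambda$ and $\epsilon$.

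\begin{itemize}
\item \textbf{Upper bound.} Since $r$ is bounded above and below on $\overline{\Omega}$, the weight $e^{2\lambda r^{-\beta}}$ is bounded above and below by positive constants. Since $s>\frac d2+2$, the Neumann trace is continuous from $H^s(\Omega)$ into $L^2(\partial\Omega)$. Hence the first two terms in \eqref{norm} are bounded by $C\|\bu\|^2_{[H^s(\Omega)]^{N+1}}$.
\item \textbf{Lower bound.} This is supplied by the term $\frac{2\epsilon}{\lambda^3}\|\bu\|^2_{[H^s(\Omega)]^{N+1}}$, because $\epsilon>0$.
\end{itemize}

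Next, I will show that $H$ is complete in this norm. The proof of Proposition~\ref{prop:well_defined} shows that $H$ is closed in $[H^s(\Omega)]^{N+1}$. Indeed, the trace condition $\bm\varphi|_{\partial\Omega}=\mathbf 0$ is preserved under $H^s$ limits by continuity of the trace. The bound $\|\bm\varphi\|_{L^\infty}\le M$ is preserved by the embedding $H^s\hookrightarrow L^\infty$. Since the two norms are equivalent, $(H,\|\cdot\|_{\lambda,\epsilon})$ is a complete metric space. Moreover, $F_{\lambda,\epsilon}$ maps $H$ into $H$ by Proposition~\ref{prop:well_defined}.

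By Theorem~\ref{thm:contraction}, for $\lambda>\lambda_1$ the map $F_{\lambda,\epsilon}$ is Lipschitz with constant $\sqrt{C/\lambda^3}$. After enlarging $\lambda_1$ if necessary so that $C/\lambda_1^3<1$, the constant $\mu:=\sqrt{C/\lambda^3}$ lies in $(0,1)$. The endpoint $\lambda=\lambda_1$ in the corollary is handled by this same enlargement, or by reading the statement with a strict inequality.

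The Banach contraction principle then gives three conclusions:
\begin{enumerate}
\item The Picard sequence $\bu^{(k)}$ is Cauchy, because $\|\bu^{(k+1)}-\bu^{(k)}\|_{\lambda,\epsilon}\le\mu^k\|\bu^{(1)}-\bu^{(0)}\|_{\lambda,\epsilon}$. It therefore converges to some $\bu_{\lambda,\epsilon}\in H$.
\item The limit is a fixed point. Since $F_{\lambda,\epsilon}$ is Lipschitz, hence continuous, passing to the limit in $\bu^{(k+1)}=F_{\lambda,\epsilon}(\bu^{(k)})$ gives $F_{\lambda,\epsilon}(\bu_{\lambda,\epsilon})=\bu_{\lambda,\epsilon}$.
\item The fixed point is unique. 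If $\bu$ and $\bv$ are two fixed points, then $\|\bu-\bv\|_{\lambda,\epsilon}\le\mu\|\bu-\bv\|_{\lambda,\epsilon}$, which forces $\bu=\bv$.
\end{enumerate}

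The convergence rate follows by induction: $\|\bu^{(k)}-\bu_{\lambda,\epsilon}\|_{\lambda,\epsilon}=\|F_{\lambda,\epsilon}(\bu^{(k-1)})-F_{\lambda,\epsilon}(\bu_{\lambda,\epsilon})\|_{\lambda,\epsilon}\le\mu\|\bu^{(k-1)}-\bu_{\lambda,\epsilon}\|_{\lambda,\epsilon}$. The characterization of $\bu_{\lambda,\epsilon}$ as the unique minimizer of $J^{\lambda,\epsilon}_{\bu_{\lambda,\epsilon}}$ over $H$ is the definition of $F_{\lambda,\epsilon}$ combined with the uniqueness in Proposition~\ref{prop:well_defined}.

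The only step needing care is the completeness and norm equivalence in the first two paragraphs. The $\lambda^{-3}\epsilon$ factor is what makes $\|\cdot\|_{\lambda,\epsilon}$ control the full $H^s$ norm; without $\epsilon>0$ completeness would fail. Everything else is the standard contraction-mapping argument.
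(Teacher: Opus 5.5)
Your proposal is correct and follows the same route as the paper. Completeness of $(H,\|\cdot\|_{\lambda,\epsilon})$ comes from $H$ being closed in $[H^s(\Omega)]^{N+1}$ and from the equivalence of $\|\cdot\|_{\lambda,\epsilon}$ with the $H^s$ norm, and then the Banach fixed-point theorem is applied to the contraction given by Theorem~\ref{thm:contraction}. You are more careful than the paper on two points it leaves implicit: you verify the norm equivalence explicitly, and you note that $\lambda_1$ must be large enough that $\mu=\sqrt{C/\lambda^3}<1$, including the endpoint $\lambda=\lambda_1$.
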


\begin{proof}
By Theorem \ref{thm:contraction}, the map $F_{\lambda,\epsilon}:H\to H$ is contractive with respect to the norm $\|\cdot\|_{\lambda,\epsilon}$. Since $H$ is a closed subset of $[H^s(\Omega)]^{N+1}$ and the norm $\|\cdot\|_{\lambda,\epsilon}$ is equivalent to the norm of $[H^s(\Omega)]^{N+1}$, the metric space $(H,\|\cdot\|_{\lambda,\epsilon})$ is complete. Therefore, the conclusion follows from the Banach fixed-point theorem.
\end{proof}

\section{The consistency of the fixed-point}\label{sec5}

In inverse problems, it is essential to address noisy data. If the boundary measurement $f(\x,t)$, $(\x,t)\in \partial\Omega\times(0,T)$, in \eqref{data} is contaminated by noise, then the induced boundary data $f_m$, $m=0,\dots,N$, in \eqref{15} are also noisy. Let $f_m^*$, $m=0,\dots,N$, denote the corresponding exact data. Let $\bu^* = \begin{bmatrix}
u_0& \dots, u_N
\end{bmatrix}^\top$ be the solution of the time-dimensional reduction model associated with the exact data, that is, $\bu^*$ solves
\begin{equation} \label{5.1}
\begin{cases}
\ii \displaystyle\sum_{n=0}^N s_{mn} u_n^*(\x)
+ \Delta u_m^*(\x)
+ \displaystyle\sum_{n=0}^N b_{mn}^N(\bu^*,\x)\,u_n^*(\x)
=0, & \x \in \Omega,\\
u_m^*(\x) = 0, & \x \in \partial\Omega,\\
\partial_{\nu} u_m^*(\x) = f_m^*(\x), & \x \in \partial\Omega,
\end{cases}
\qquad m=0,1,\dots,N.
\end{equation}
In this section, we show that the fixed point $\bu_{\lambda,\epsilon}$ is close to $\bu^*$.
Writing $\bff^* = \begin{bmatrix}
f_0^*& \dots, f_N^*
\end{bmatrix}^\top$ and $\bff = \begin{bmatrix}
f_0& \dots, f_N
\end{bmatrix}^\top$, we have the theorem.

\begin{Theorem}
\label{thm:stability}
Assume that $\bu^*\in H$ solves the exact reduced system \eqref{5.1}.
Let $\bu_{\lambda,\epsilon}\in H$ be the fixed point associated with the noisy data
$\bff=(f_0,\dots,f_N)^\top$, and let $\bff^*=(f_0^*,\dots,f_N^*)^\top$ denote the exact data.
Fix $\epsilon>0$ and $\beta\ge \beta_0$. Then there exist $\lambda_1\ge \lambda_0$ and $C>0$,
depending only on $\Omega$, $\beta$, $s$, $N$, $p$, $T$,
$\|q\|_{L^\infty(\Omega\times(0,T))}$, $\{\Psi_\ell\}_{\ell=0}^N$,
$\{s_{mn}\}_{m,n=0}^N$, and $M$, such that for all $\lambda\ge \lambda_1$,
\begin{equation}\label{5.2}
\|\bu_{\lambda,\epsilon}-\bu^*\|_{\lambda,\epsilon}^2
\le
C\frac{\epsilon}{\lambda^3}\|\bu^*\|_{[H^s(\Omega)]^{N+1}}^2
+
C\int_{\partial\Omega} e^{2\lambda r^{-\beta}} |\bff-\bff^*|^2\,d\sigma(\x).
\end{equation}
\end{Theorem}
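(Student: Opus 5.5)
Write $\bu=\bu_{\lambda,\epsilon}$ and $\bh=\bu-\bu^*$; we imitate the variational argument from the proof of Theorem~\ref{thm:contraction}. Since $\bu=F_{\lambda,\epsilon}(\bu)$ minimizes $J^{\lambda,\epsilon}_{\bu}$ over the convex set $H$, and $\bu^*\in H$, we test with $\bu-\theta\bh\in H$ and let $\theta\to0^+$. This gives the first-order inequality \eqref{4.3} with $\bm\varphi=\bu$ and data $\bff$.

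For $\bu^*$ we do not use a minimization property. Instead, the exact system \eqref{5.1} gives the identity
\[
\ii\sum_n s_{mn}u_n^*+\Delta u_m^*+\sum_n b^N_{mn}(\bu^*,\x)u_n^*=0 \quad\text{in }\Omega,
\qquad
\partial_\nu u_m^*-f_m^*=0 \quad\text{on }\partial\Omega.
\]
Take the inner product of the first relation with $e^{2\lambda r^{-\beta}}\big(\ii\sum_n s_{mn}h_n+\Delta h_m\big)$ and of the second with $\lambda^3e^{2\lambda r^{-\beta}}\partial_\nu h_m$. Subtract these (zero) quantities from \eqref{4.3}. The regularization term is not cancelled, so write
\[
\epsilon\operatorname{Re}\langle u_m,h_m\rangle_{H^s}=\epsilon\|h_m\|^2_{H^s}+\epsilon\operatorname{Re}\langle u^*_m,h_m\rangle_{H^s}.
\]
The analogue of \eqref{4.6} is then
\[
\sum_m\Big[\|e^{\lambda r^{-\beta}}(\ii\textstyle\sum_n s_{mn}h_n+\Delta h_m)\|^2+\lambda^3\|e^{\lambda r^{-\beta}}\partial_\nu h_m\|^2_{L^2(\partial\Omega)}+\epsilon\|h_m\|^2_{H^s}\Big]
\]
\[
\le -\operatorname{Re}\langle e^{2\lambda r^{-\beta}}(\text{nonlinear difference}),\ii\textstyle\sum_n s_{mn}h_n+\Delta h_m\rangle
+\lambda^3\operatorname{Re}\langle e^{2\lambda r^{-\beta}}(f_m-f_m^*),\partial_\nu h_m\rangle_{\partial\Omega}
-\epsilon\operatorname{Re}\langle u_m^*,h_m\rangle_{H^s}.
\]

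We bound the three right-hand terms by Cauchy--Schwarz with absorption. The nonlinear term is treated as in \eqref{4.7}--\eqref{4.8}, using the local Lipschitz property on $H$, and is bounded by $C\int e^{2\lambda r^{-\beta}}|\bh|^2$. The boundary term is bounded by $\frac{\lambda^3}{2}\|e^{\lambda r^{-\beta}}\partial_\nu\bh\|^2+\frac{\lambda^3}{2}\|e^{\lambda r^{-\beta}}(\bff-\bff^*)\|^2$. The regularization term is bounded by $\frac\epsilon2\|\bh\|^2_{H^s}+\frac\epsilon2\|\bu^*\|^2_{H^s}$.

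Next we bound $\int e^{2\lambda r^{-\beta}}|\Delta h_m|^2$ from below by the Carleman estimate \eqref{3.42}, which applies because $h_m|_{\partial\Omega}=0$. The resulting $\lambda\int_{\partial\Omega}|\partial_\nu h_m|^2$ error and the $C\int e^{2\lambda r^{-\beta}}|\bh|^2$ terms are absorbed by the $\lambda^3$ terms on the left once $\lambda\ge\lambda_1$ is large. This yields
\[
\lambda^3\Big[\int e^{2\lambda r^{-\beta}}|\bh|^2+c\int_{\partial\Omega}e^{2\lambda r^{-\beta}}|\partial_\nu\bh|^2+\tfrac{c\epsilon}{\lambda^3}\|\bh\|^2_{H^s}\Big]
\le C\epsilon\|\bu^*\|^2_{H^s}+C\lambda^3\int_{\partial\Omega}e^{2\lambda r^{-\beta}}|\bff-\bff^*|^2.
\]
Dividing by $\lambda^3$ and comparing with the norm \eqref{norm} gives \eqref{5.2}.

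The main obstacle is the bookkeeping of powers of $\lambda$. The boundary noise term enters with weight $\lambda^3$, and it must appear only to that order so that after division it is $O(1)$ times the noise, as claimed. The nonlinear term carries no power of $\lambda$, so it must be dominated by the Carleman gain $\lambda^3\int e^{2\lambda r^{-\beta}}|\bh|^2$, and this needs $\lambda$ large. The Carleman boundary error has only one power of $\lambda$, so it is absorbed by the $\lambda^3$ boundary term. The cross term between $\Delta h_m$ and $\ii\sum_n s_{mn}h_n$ is handled via $|a+b|^2\ge\frac12|a|^2-|b|^2$, exactly as in \eqref{4.9}.
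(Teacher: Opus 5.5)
Your proposal is correct and follows the paper's proof essentially step for step. Both arguments use the first-order variational inequality at the fixed point tested against $\bu^*\in H$, then subtract the exact reduced system \eqref{5.1}. Both absorb the nonlinear, noise and regularization terms by Cauchy--Schwarz, use $|a+b|^2\ge\frac12|a|^2-|b|^2$, and apply the Carleman estimate \eqref{3.42} with $\lambda$ large before dividing by $\lambda^3$, with the same accounting of the powers of $\lambda$.
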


\begin{proof}
Set
\[
\bh:=\bu_{\lambda,\epsilon}-\bu^*
=
\begin{bmatrix}
h_0 & \dots & h_N
\end{bmatrix}^{\top}.
\]
Since $\bu^*\in H$ and $H$ is convex, the same argument used to derive \eqref{4.3} yields
\begin{multline}\label{5.3}
\sum_{m=0}^N \Bigg[
\operatorname{Re}\Big\langle
e^{2\lambda r^{-\beta}}
\Big(
\ii \sum_{n=0}^N s_{mn}u_{\lambda,\epsilon,n}
+\Delta u_{\lambda,\epsilon,m}
+\sum_{n=0}^N b_{mn}^N(\bu_{\lambda,\epsilon},\x)\,u_{\lambda,\epsilon,n}
\Big),
\ii \sum_{n=0}^N s_{mn}h_n+\Delta h_m
\Big\rangle_{L^2(\Omega)}
\\
+\lambda^3 \operatorname{Re}\Big\langle
e^{2\lambda r^{-\beta}}(\partial_\nu u_{\lambda,\epsilon,m}-f_m),
\partial_\nu h_m
\Big\rangle_{L^2(\partial\Omega)}
+\epsilon\,\operatorname{Re}\langle u_{\lambda,\epsilon,m},h_m\rangle_{H^s(\Omega)}
\Bigg]
\le 0.
\end{multline}
Using $u_{\lambda,\epsilon,m}=u_m^*+h_m$ and the fact that $\bu^*$ solves \eqref{5.1}, we obtain
\begin{multline}\label{5.4}
\sum_{m=0}^N \Bigg[
\int_\Omega e^{2\lambda r^{-\beta}}
\left|\ii\sum_{n=0}^N s_{mn}h_n+\Delta h_m\right|^2\,d\x
+\lambda^3\int_{\partial\Omega} e^{2\lambda r^{-\beta}}|\partial_\nu h_m|^2\,d\sigma(\x)
+\epsilon\|h_m\|_{H^s(\Omega)}^2
\Bigg]
\\
\le
-\sum_{m=0}^N \operatorname{Re}\Bigg\langle
e^{2\lambda r^{-\beta}}
\Big(
\sum_{n=0}^N b_{mn}^N(\bu_{\lambda,\epsilon},\x)\,u_{\lambda,\epsilon,n}
-
\sum_{n=0}^N b_{mn}^N(\bu^*,\x)\,u_n^*
\Big),
\ii\sum_{n=0}^N s_{mn}h_n+\Delta h_m
\Bigg\rangle_{L^2(\Omega)}
\\
-\lambda^3\sum_{m=0}^N \operatorname{Re}\Big\langle
e^{2\lambda r^{-\beta}}(f_m^*-f_m),\partial_\nu h_m
\Big\rangle_{L^2(\partial\Omega)}
-\epsilon\sum_{m=0}^N \operatorname{Re}\langle u_m^*,h_m\rangle_{H^s(\Omega)}.
\end{multline}

We now estimate the three terms on the right-hand side of \eqref{5.4}. Since $H$ is bounded in
$[L^\infty(\Omega)]^{N+1}$, the map
\[
\bm{\zeta}\longmapsto \sum_{n=0}^N b_{mn}^N(\bm{\zeta},\x)\,\zeta_n
\]
is locally Lipschitz on $H$, uniformly in $\x\in\Omega$. Hence
\[
\left|
\sum_{n=0}^N b_{mn}^N(\bu_{\lambda,\epsilon},\x)\,u_{\lambda,\epsilon,n}
-
\sum_{n=0}^N b_{mn}^N(\bu^*,\x)\,u_n^*
\right|
\le C|\bh(\x)|.
\]
Therefore, by $ab\le \frac12(a^2+b^2)$,
\begin{multline}\label{5.5}
\left|
\operatorname{Re}\Bigg\langle
e^{2\lambda r^{-\beta}}
\Big(
\sum_{n=0}^N b_{mn}^N(\bu_{\lambda,\epsilon},\x)\,u_{\lambda,\epsilon,n}
-
\sum_{n=0}^N b_{mn}^N(\bu^*,\x)\,u_n^*
\Big),
\ii\sum_{n=0}^N s_{mn}h_n+\Delta h_m
\Bigg\rangle_{L^2(\Omega)}
\right|
\\
\le
\frac12\int_\Omega e^{2\lambda r^{-\beta}}
\left|\ii\sum_{n=0}^N s_{mn}h_n+\Delta h_m\right|^2\,d\x
+
C\int_\Omega e^{2\lambda r^{-\beta}}|\bh|^2\,d\x.
\end{multline}
Similarly,
\begin{multline}\label{5.6}
\lambda^3\left|
\operatorname{Re}\Big\langle
e^{2\lambda r^{-\beta}}(f_m^*-f_m),\partial_\nu h_m
\Big\rangle_{L^2(\partial\Omega)}
\right|
\\
\le
\frac{\lambda^3}{2}\int_{\partial\Omega} e^{2\lambda r^{-\beta}}|\partial_\nu h_m|^2\,d\sigma(\x)
+
\frac{\lambda^3}{2}\int_{\partial\Omega} e^{2\lambda r^{-\beta}}|f_m-f_m^*|^2\,d\sigma(\x),
\end{multline}
and
\begin{equation}\label{5.7}
\epsilon\left|\operatorname{Re}\langle u_m^*,h_m\rangle_{H^s(\Omega)}\right|
\le
\frac{\epsilon}{2}\|h_m\|_{H^s(\Omega)}^2
+
\frac{\epsilon}{2}\|u_m^*\|_{H^s(\Omega)}^2.
\end{equation}

Substituting \eqref{5.5}--\eqref{5.7} into \eqref{5.4} and absorbing the half terms into the left-hand side, we obtain
\begin{multline}\label{5.8}
\sum_{m=0}^N \Bigg[
\int_\Omega e^{2\lambda r^{-\beta}}
\left|\ii\sum_{n=0}^N s_{mn}h_n+\Delta h_m\right|^2\,d\x
+\lambda^3\int_{\partial\Omega} e^{2\lambda r^{-\beta}}|\partial_\nu h_m|^2\,d\sigma(\x)
+\epsilon\|h_m\|_{H^s(\Omega)}^2
\Bigg]
\\
\le
C\int_\Omega e^{2\lambda r^{-\beta}}|\bh|^2\,d\x
+
C\lambda^3\int_{\partial\Omega} e^{2\lambda r^{-\beta}}|\bff-\bff^*|^2\,d\sigma(\x)
+
C\epsilon\|\bu^*\|_{[H^s(\Omega)]^{N+1}}^2.
\end{multline}

Next, using
\[
|a+b|^2\ge \frac12|a|^2-|b|^2
\]
with
\[
a=\Delta h_m,
\qquad
b=\ii\sum_{n=0}^N s_{mn}h_n,
\]
we obtain
\[
\left|\ii\sum_{n=0}^N s_{mn}h_n+\Delta h_m\right|^2
\ge
\frac12|\Delta h_m|^2-C|\bh|^2.
\]
Hence \eqref{5.8} implies
\begin{multline}\label{5.9}
\sum_{m=0}^N \Bigg[
\frac12\int_\Omega e^{2\lambda r^{-\beta}}|\Delta h_m|^2\,d\x
+\lambda^3\int_{\partial\Omega} e^{2\lambda r^{-\beta}}|\partial_\nu h_m|^2\,d\sigma(\x)
+\epsilon\|h_m\|_{H^s(\Omega)}^2
\Bigg]
\\
\le
C\int_\Omega e^{2\lambda r^{-\beta}}|\bh|^2\,d\x
+
C\lambda^3\int_{\partial\Omega} e^{2\lambda r^{-\beta}}|\bff-\bff^*|^2\,d\sigma(\x)
+
C\epsilon\|\bu^*\|_{[H^s(\Omega)]^{N+1}}^2.
\end{multline}

Since both $\bu_{\lambda,\epsilon}$ and $\bu^*$ satisfy the homogeneous Dirichlet boundary condition, we have
$h_m=0$ on $\partial\Omega$. Therefore, applying the Carleman estimate \eqref{3.42} to each $h_m$, we get
\begin{equation}\label{5.10}
\int_\Omega e^{2\lambda r^{-\beta}}|\Delta h_m|^2\,d\x
\ge
C\int_\Omega e^{2\lambda r^{-\beta}}
\bigl(\lambda^3|h_m|^2+\lambda|\nabla h_m|^2\bigr)\,d\x
-
C\lambda\int_{\partial\Omega} e^{2\lambda r^{-\beta}}|\partial_\nu h_m|^2\,d\sigma(\x).
\end{equation}
Substituting \eqref{5.10} into \eqref{5.9}, we obtain
\begin{multline}\label{5.11}
\sum_{m=0}^N \Bigg[
C\int_\Omega e^{2\lambda r^{-\beta}}
\bigl(\lambda^3|h_m|^2+\lambda|\nabla h_m|^2\bigr)\,d\x
-C\lambda\int_{\partial\Omega} e^{2\lambda r^{-\beta}}|\partial_\nu h_m|^2\,d\sigma(\x)
\\
+\lambda^3\int_{\partial\Omega} e^{2\lambda r^{-\beta}}|\partial_\nu h_m|^2\,d\sigma(\x)
+\epsilon\|h_m\|_{H^s(\Omega)}^2
\Bigg]
\\
\le
C\int_\Omega e^{2\lambda r^{-\beta}}|\bh|^2\,d\x
+
C\lambda^3\int_{\partial\Omega} e^{2\lambda r^{-\beta}}|\bff-\bff^*|^2\,d\sigma(\x)
+
C\epsilon\|\bu^*\|_{[H^s(\Omega)]^{N+1}}^2.
\end{multline}

Choosing $\lambda$ sufficiently large, we absorb the lower-order bulk term on the right-hand side and the
$C\lambda$ boundary term on the left into the corresponding $\lambda^3$ terms. Consequently,
\begin{multline}\label{5.12}
\lambda^3\int_\Omega e^{2\lambda r^{-\beta}}|\bh|^2\,d\x
+
\lambda^3\int_{\partial\Omega} e^{2\lambda r^{-\beta}}|\partial_\nu\bh|^2\,d\sigma(\x)
+
\epsilon\|\bh\|_{[H^s(\Omega)]^{N+1}}^2
\\
\le
C\lambda^3\int_{\partial\Omega} e^{2\lambda r^{-\beta}}|\bff-\bff^*|^2\,d\sigma(\x)
+
C\epsilon\|\bu^*\|_{[H^s(\Omega)]^{N+1}}^2.
\end{multline}
Dividing by $\lambda^3$ yields
\begin{multline}\label{5.13}
\int_\Omega e^{2\lambda r^{-\beta}}|\bh|^2\,d\x
+
\int_{\partial\Omega} e^{2\lambda r^{-\beta}}|\partial_\nu\bh|^2\,d\sigma(\x)
+
\frac{\epsilon}{\lambda^3}\|\bh\|_{[H^s(\Omega)]^{N+1}}^2
\\
\le
C\int_{\partial\Omega} e^{2\lambda r^{-\beta}}|\bff-\bff^*|^2\,d\sigma(\x)
+
C\frac{\epsilon}{\lambda^3}\|\bu^*\|_{[H^s(\Omega)]^{N+1}}^2.
\end{multline}
By the definition of $\|\cdot\|_{\lambda,\epsilon}$ and after adjusting the constant $C$, \eqref{5.2} follows.
\end{proof}

\begin{Remark}    
We note that the exact modal vector $\bu^*=(u_0^*,\dots,u_N^*)^\top$
should be understood as the exact solution of the truncated reduced model.
If instead $\bu^*$ is taken to be the first $N+1$ modes of the exact
solution $u$ of the original nonlinear Schr\"odinger equation, then
$\bu^*$ satisfies the truncated system only up to a truncation residual:
\[
i\sum_{n=0}^N s_{mn}u_n^*(\x)+\Delta u_m^*(\x)
+\sum_{n=0}^N b_{mn}^N(\bu^*,\x)u_n^*(\x)
=
R_m^N(\x),\qquad m=0,\dots,N.
\]
Here $R_m^N$ represents the contribution of the discarded modes and the
error caused by replacing the full nonlinear term by its $N$-mode
approximation. If
\[
u\in H^\ell((0,T);H^s(\Omega)),\qquad \ell\ge 3,\qquad s>\frac d2+2,
\]
then the Legendre-polynomial-exponential expansion of $u$ converges in
$L^2((0,T);H^s(\Omega))$. Hence the tail
\[
u(\x,t)-\sum_{\ell=0}^N u_\ell^*(\x)\Psi_\ell(t)
\]
converges to $0$ in $L^2((0,T);H^s(\Omega))$ as $N\to\infty$. Since
$H^s(\Omega)\hookrightarrow L^\infty(\Omega)$ and the nonlinear map
$z\mapsto |z|^{p-1}z$ is locally Lipschitz on bounded subsets of
$\mathbb C$, the truncation error in the nonlinear term also converges to
$0$. Therefore, for each fixed $m\ge 0$,
\[
\int_\Omega e^{2\lambda r^{-\beta}}|R_m^N(\x)|^2\,d\x
\to 0
\qquad\mbox{as }N\to\infty .
\]
Thus $R_m^N$ is precisely the residual caused by replacing the full
projected system by its $N$-mode truncation.
\end{Remark}
\begin{Remark}
Fix the Carleman parameters $\lambda$, $\beta$, and $\x_0$. Then estimate \eqref{5.2} shows that if the boundary data $\mathbf f$ are close to the exact data $\mathbf f^*$, the fixed point $\bu_{\lambda,\epsilon}$ provides an approximation of the exact reduced solution $\bu^*$. More precisely, the reconstruction error is controlled by two terms: the data discrepancy term
$
\int_{\partial\Omega} e^{2\lambda r^{-\beta}} |\mathbf f-\mathbf f^*|^2\,d\sigma(\x),
$
and the regularization term
$
\frac{\epsilon}{\lambda^3}\|\bu^*\|_{[H^s(\Omega)]^{N+1}}^2.
$
Therefore, for fixed Carleman parameters and small $\epsilon$, if $\mathbf f\to \mathbf f^*$, then $\bu_{\lambda,\epsilon}$ is close to $\bu^*$ in the norm $\|\cdot\|_{\lambda,\epsilon}$.
\end{Remark}

\begin{Remark}
There is no contradiction between the ill-posedness of the original inverse initial-data problem and the stability estimate in Theorem~\ref{thm:stability}. Indeed, the theorem does not assert stability for the full inverse problem in its original infinite-dimensional form. Instead, we first approximate that problem by the time-dimensional reduction model \eqref{15}, which is obtained by truncating the Legendre polynomial-exponential expansion to the first $N+1$ modes. This truncation removes the high-oscillation components of the solution, which are typically the most sensitive to noise, and therefore acts as a filtering mechanism. After this reduction, we solve a regularized problem for the coupled elliptic system with Cauchy data by means of the weighted functional $J_{\bm{\varphi}}^{\lambda,\epsilon}$. The resulting fixed point $\bu_{\lambda,\epsilon}$ is thus the solution of a stabilized and finite-dimensional approximation of the original inverse problem. The stability estimate in Theorem~\ref{thm:stability} should be understood in this regularized sense.
\end{Remark}

\section{Numerical study}\label{sec6}

In this section, we present the numerical study for solving Problem \ref{idp}, including the Carleman contraction method in Algorithm \ref{alg:picard}, and show some numerical results.

\begin{algorithm}[h!]
\caption{Reconstruction of the initial wave field via the Carleman--Picard iteration}
\label{alg:picard}
\begin{algorithmic}[1]

\State Fix the artificial parameters $\lambda$, $\beta$, $\x_0$, $\epsilon$, and the truncation number $N$. \label{s1}

\State Construct the Legendre polynomial-exponential basis $\{\Psi_n\}_{n=0}^N$ and compute the projected boundary data
\[
f_m(\x)=\int_0^T e^{-2t} f(\x,t)\Psi_m(t)\,dt,
\qquad m=0,\dots,N.
\]

\State Choose an initial guess \label{s3}
\[
\bu^{(0)}=\begin{bmatrix}u_0^{(0)} & u_1^{(0)} & \cdots & u_N^{(0)}\end{bmatrix}^\top \in H.
\] 

\For{$k=0,1,2,\dots,K_{\max}-1$ (for some $K_{\max}\geq 1$)}
    \State For the frozen coefficient vector $\bu^{(k)}$, compute $\bu^{(k+1)}\in H$ as the unique minimizer of
    \[
    J_{\bu^{(k)}}^{\lambda,\epsilon}(\bu).
    \]
    Equivalently, \label{s5}
    \[
    \bu^{(k+1)} = F_{\lambda,\epsilon}(\bu^{(k)}).
    \]
\EndFor

\State Set the computed coefficient vector as
\[
\bu^{\rm comp}=\bu^{(K_{\max})}.
\]

\State Write
\[
\bu^{\rm comp}=
\begin{bmatrix}
u_0^{\rm comp} & \dots & u_N^{\rm comp}
\end{bmatrix}^\top
\]
and reconstruct the approximate space-time solution by
\[
u^{\rm comp}(\x,t)
:=
\sum_{n=0}^N u_n^{\rm comp}(\x)\Psi_n(t).
\]

\State Compute the approximate solution of the inverse problem, namely the initial wave field,
\[
u^{\rm comp}(\x,0)
=
\sum_{n=0}^N u_n^{\rm comp}(\x)\Psi_n(0).
\]

\end{algorithmic}
\end{algorithm}

\subsection{Forward problem and data generation}

In this subsection, we describe the numerical procedure used to generate synthetic data for the inverse problem. The forward solution is computed on the square domain
$\Omega := (-R,R)^2$ with $R=1$. We use a uniform Cartesian grid in space and a uniform partition in time. More precisely, we set
\[
N_x = 61, \qquad x_i = -R + (i-1)h_x, \qquad y_j = -R + (j-1)h_y,
\]
for $i,j=1,\dots,N_x$, where
\[
h_x = h_y = \frac{2R}{N_x-1}.
\]
In time, we choose
\[
\Delta t = 1.25\times 10^{-4}, \qquad T=0.2,
\]
and define
\[
t_n = n\Delta t, \qquad n=0,1,\dots,N_t,
\]
where $N_t=T/\Delta t$.

To generate the synthetic data, we solve the forward nonlinear Schr\"odinger equation with $q(\x, t) = 1$ (for simplicity)
\begin{equation}
\begin{cases}
\ii u_t + \Delta u +  |u|^{p-1}u = 0, & (\x,t)\in \Omega\times(0,T),\\
u(\x,t)=0, & (\x,t)\in \partial\Omega \times(0,T),\\
u(\x,0)=u^{0, \rm true}(\x), & \x\in \Omega,
\end{cases}
\label{eq:forward_nls_numerics}
\end{equation}
where $u^{0, \rm true}$ is the prescribed exact initial condition. In all computations, the boundary condition is homogeneous Dirichlet. Although the theoretical analysis is carried out for a general coefficient $q=q(\x,t)$, in the numerical experiments we restrict ourselves to the representative case $q\equiv 1$ for simplicity of implementation. This choice allows us to isolate the performance of the reconstruction method without reducing the scope of the analytical results.

We discretize \eqref{eq:forward_nls_numerics} by a semi-implicit scheme in which the Laplacian is treated implicitly while the nonlinear term is evaluated explicitly at the previous time level. Let $u^n$ denote the numerical approximation of $u(\cdot,t_n)$. Then, for $n=0,1,\dots,N_t-1$, we compute $u^{n+1}$ from
\[
\ii \frac{u^{n+1}-u^n}{\Delta t} + \Delta_h u^{n+1} +  |u^n|^{p-1}u^n = 0,
\]
where $\Delta_h$ is the standard five-point finite difference approximation of the Laplacian,
\[
(\Delta_h u)_{i,j}
=
\frac{u_{i+1,j}-2u_{i,j}+u_{i-1,j}}{h_x^2}
+
\frac{u_{i,j+1}-2u_{i,j}+u_{i,j-1}}{h_y^2}.
\]
At each time step, this scheme yields a linear system for $u^{n+1}$. The homogeneous Dirichlet boundary condition is enforced by setting the boundary values of $u^{n+1}$ equal to zero.

After solving the forward problem, we compute the boundary observation
\[
f^{*}(\x,t)=\partial_\nu u(\x,t)
\qquad \text{for } (\x,t)\in \partial\Omega \times(0,T),
\]
by finite differences on the boundary.
The noisy data are defined by
\[
f^\delta(\x,t)=f^*(\x,t)\bigl(1+\delta\,\mathrm{rand}(\x,t)\bigr),
\]
where $\delta=10\%$ and $\mathrm{rand}(\x,t)$ is a complex-valued random function uniformly distributed in the unit disk, satisfying $|\mathrm{rand}(\x,t)|\le 1$ for all $(\x,t)$.

 The projected data used in the reduced inverse model are then obtained by
\[
f_m^\delta(\x)
=
\int_0^T e^{-2t} f^\delta(\x,t)\Psi_m(t)\,dt,
\qquad m=0,\dots,N.
\]
These quantities serve as the exact boundary inputs in the time-dimensional reduction model. 

\subsection{Implementation}

In this subsection, we discuss several implementation details used in our numerical computations.

In Step~\ref{s1}, the artificial parameters are selected by manual tuning. More precisely, we adjust the parameters $\lambda$, $\beta$, $\x_0$, $\epsilon$, and $N$ until satisfactory numerical performance is obtained for a reference experiment, namely Test~1. In our implementation, we use $N=65$, $\epsilon=10^{-6}$, and $K_{\max}=10$. For the Carleman weight, we choose $\lambda=20$, $\beta=5$, and $\x_0=(0,8)$. Once these parameters are determined from Test~1, the same values are used for all remaining tests.

In Step~\ref{s3}, we choose the initial guess $\bu^{(0)}=\mathbf{0}\in H$.

We now discuss the implementation in Step~\ref{s5}. At the $k$th Picard step, given the current iterate $\bu^{(k)}$, we first evaluate the frozen nonlinear term
\[
\sum_{n=0}^N b_{mn}^N(\bu^{(k)},\x)\,u_n^{(k)}(\x),
\qquad m=0,\dots,N,
\]
and then keep this term fixed in the reduced system. This leads to a linear weighted least-squares problem for the next iterate $\bu^{(k+1)}$. More precisely, $\bu^{(k+1)}$ is computed by minimizing the functional $J_{\bu^{(k)}}^{\lambda,\epsilon}$, which consists of the weighted residual of the frozen reduced equations, the Neumann boundary mismatch term, and the Sobolev regularization term. In the MATLAB implementation, this minimization problem is assembled as an overdetermined linear system of the form $A\mathbf{x}\approx \mathbf{b}$, where $A$ is the system matrix, $\mathbf{x}$ is the vector of unknown discrete values of the coefficients $u_0,\dots,u_N$, and $\mathbf{b}$ is the corresponding right-hand side vector. We then solve this system by the MATLAB command $\texttt{x = A{\char`\\}b}$, which returns the least-squares solution. Repeating this procedure for $k=0,1,2,\dots$ generates the sequence $\{\bu^{(k)}\}_{k\ge 0}$, whose last iterate is taken as the computed approximation of the fixed point $\bu_{\lambda,\epsilon}$.

\begin{Remark}
We note that the theoretical minimization problem is posed over the
admissible set $H$, which includes the a priori bound
$\|\bu\|_{[L^\infty(\Omega)]^{N+1}}\le M$. This constraint is used in the
analysis to guarantee that the nonlinear modal map is Lipschitz on the
admissible set and hence to prove the contraction property of
$F_{\lambda,\epsilon}$. In the numerical implementation, however, we solve
the linear least-squares problem obtained at each Picard step by the
unconstrained MATLAB command \texttt{A\textbackslash b}. This is a practical
implementation of the Carleman--Picard iteration. In all numerical tests
reported below, the computed iterates remained uniformly bounded and the
Dirichlet boundary condition was imposed directly on the discrete unknowns.
Thus the computed solutions stayed inside a bounded discrete analogue of
the admissible set.
\end{Remark}

All other steps in Algorithm~\ref{alg:picard}, including the projection of the boundary data and the reconstruction of the space-time solution and the initial data, are straightforward to implement once the coefficient vectors have been computed.

\subsection{Numerical examples}

In this subsection, we present some numerical tests obtained by Algorithm~\ref{alg:picard}.

{\bf Test 1.} For Test~1, we choose the true initial wave field in the form
\[
u^0(x,y)=u_R^0(x,y)+i\,u_I^0(x,y),
\]
where the real and imaginary parts are two spatially separated disk-shaped inclusions. More precisely,
\[
u_R^0(x,y)=
\begin{cases}
1, & (x+0.25)^2+(y-0.15)^2<0.18^2,\\
0, & \text{otherwise},
\end{cases}
\]
and
\[
u_I^0(x,y)=
\begin{cases}
1.5, & (x-0.20)^2+(y+0.20)^2<0.24^2,\\
0, & \text{otherwise}.
\end{cases}
\]
Thus, $u^0$ consists of a real-valued circular inclusion centered at $(-0.25,0.15)$ with radius $0.18$, and an imaginary-valued circular inclusion centered at $(0.20,-0.20)$ with radius $0.24$ and amplitude $1.5$. In this test, we choose $p=2$, corresponding to a quadratic power-type nonlinearity.

\begin{figure}[ht]
\centering
\subfloat[True real part $\Re(u^{0,\mathrm{true}})$. \label{fig:case1_true_real}]{
    \includegraphics[width=0.31\textwidth]{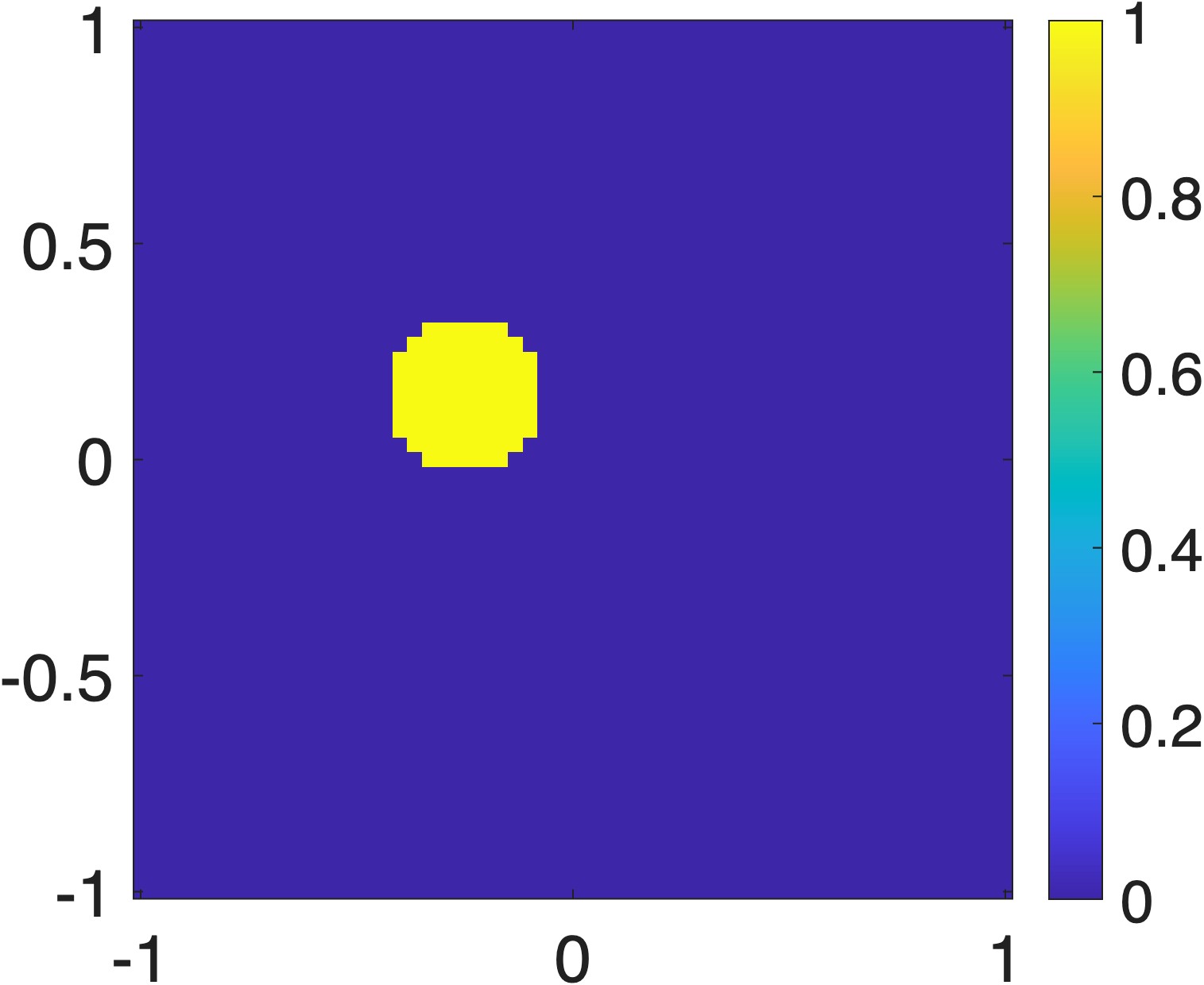}
}
\hfill
\subfloat[True imaginary part $\Im(u^{0,\mathrm{true}})$. \label{fig:case1_true_imag}]{
    \includegraphics[width=0.31\textwidth]{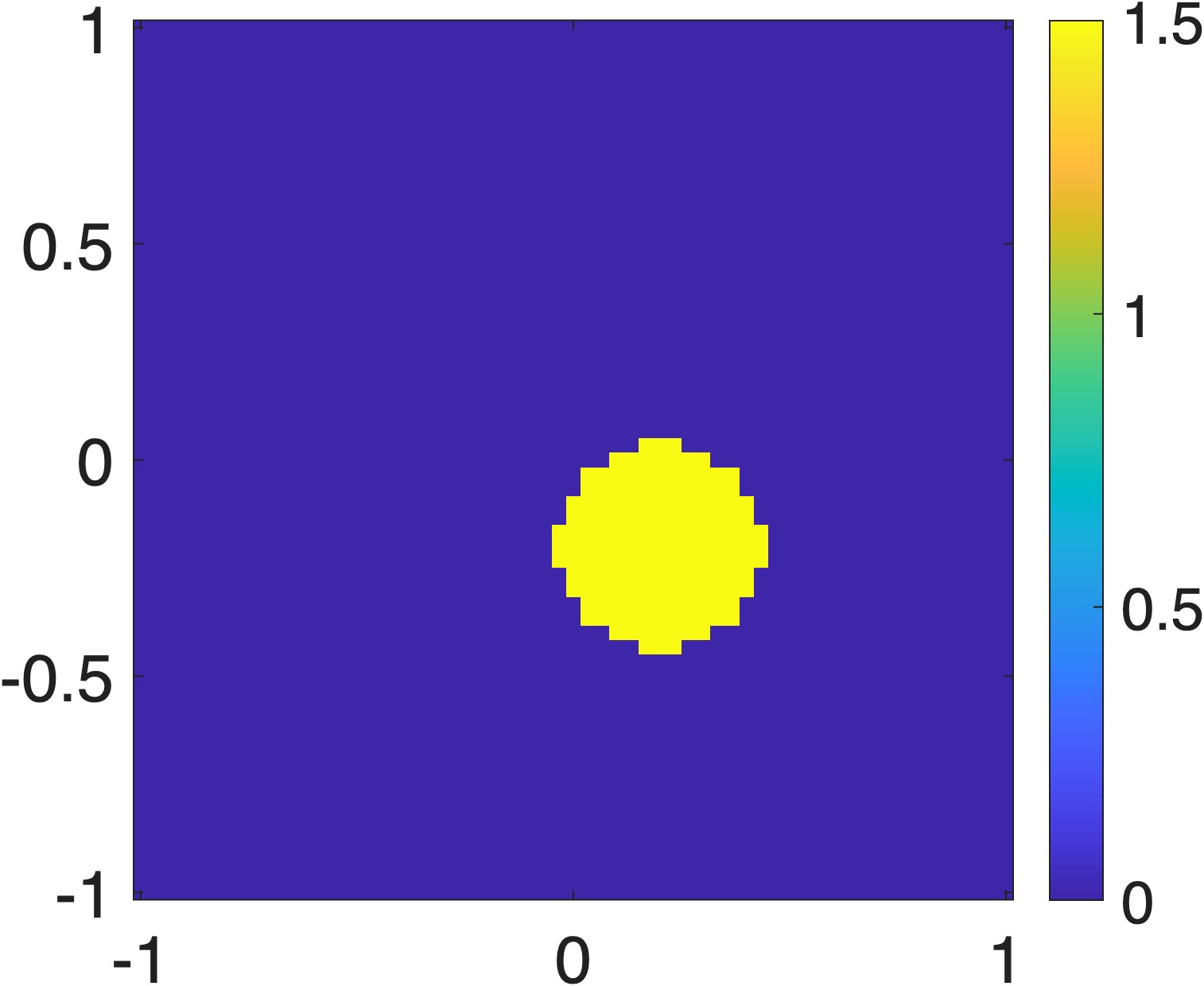}
}
\hfill
\subfloat[Relative change. \label{fig:case1_relchange}]{
    \includegraphics[width=0.31\textwidth]{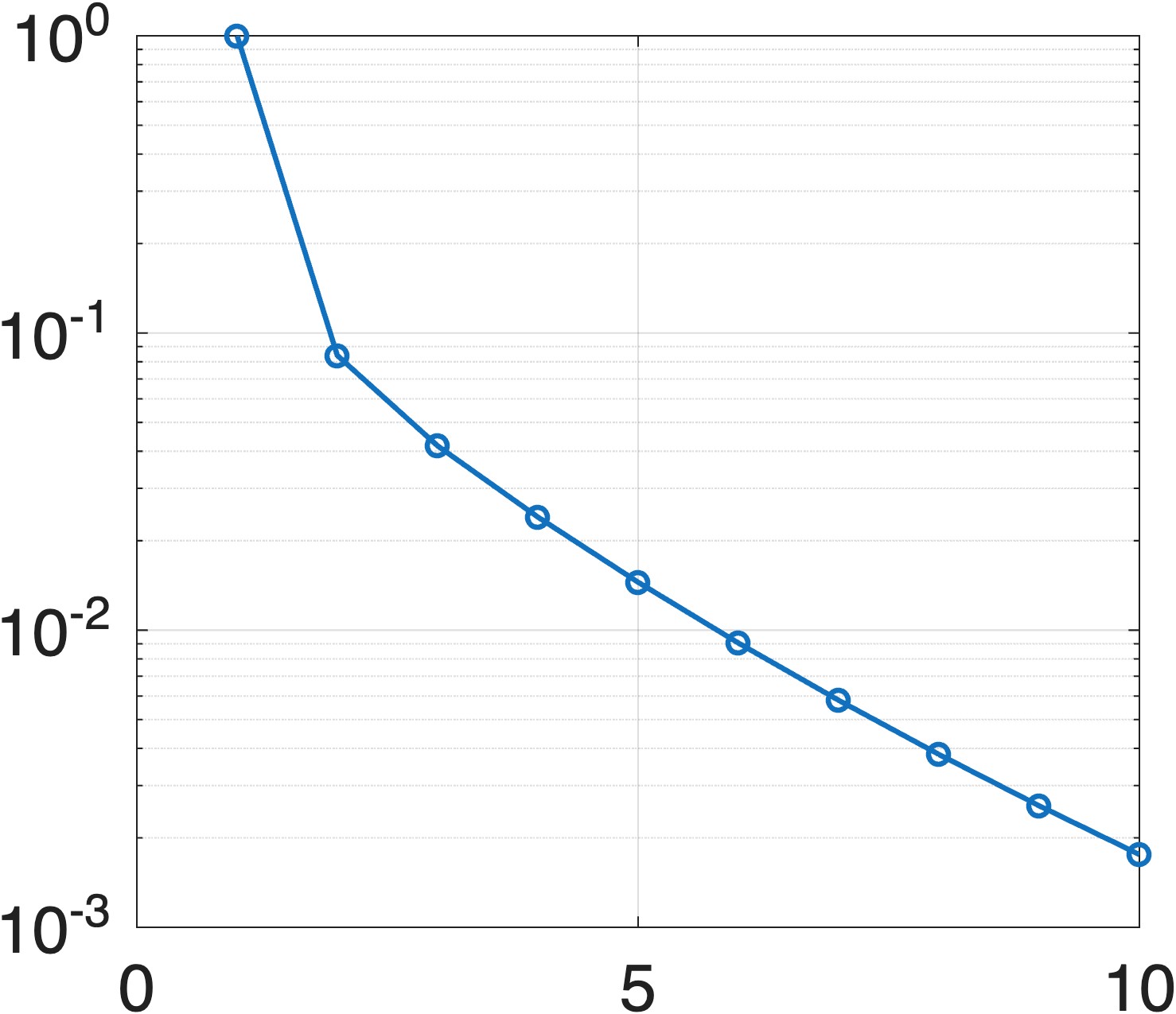}
}

\subfloat[Computed real part $\Re(u^{0,\mathrm{comp}})$. \label{fig:case1_comp_real}]{
    \includegraphics[width=0.31\textwidth]{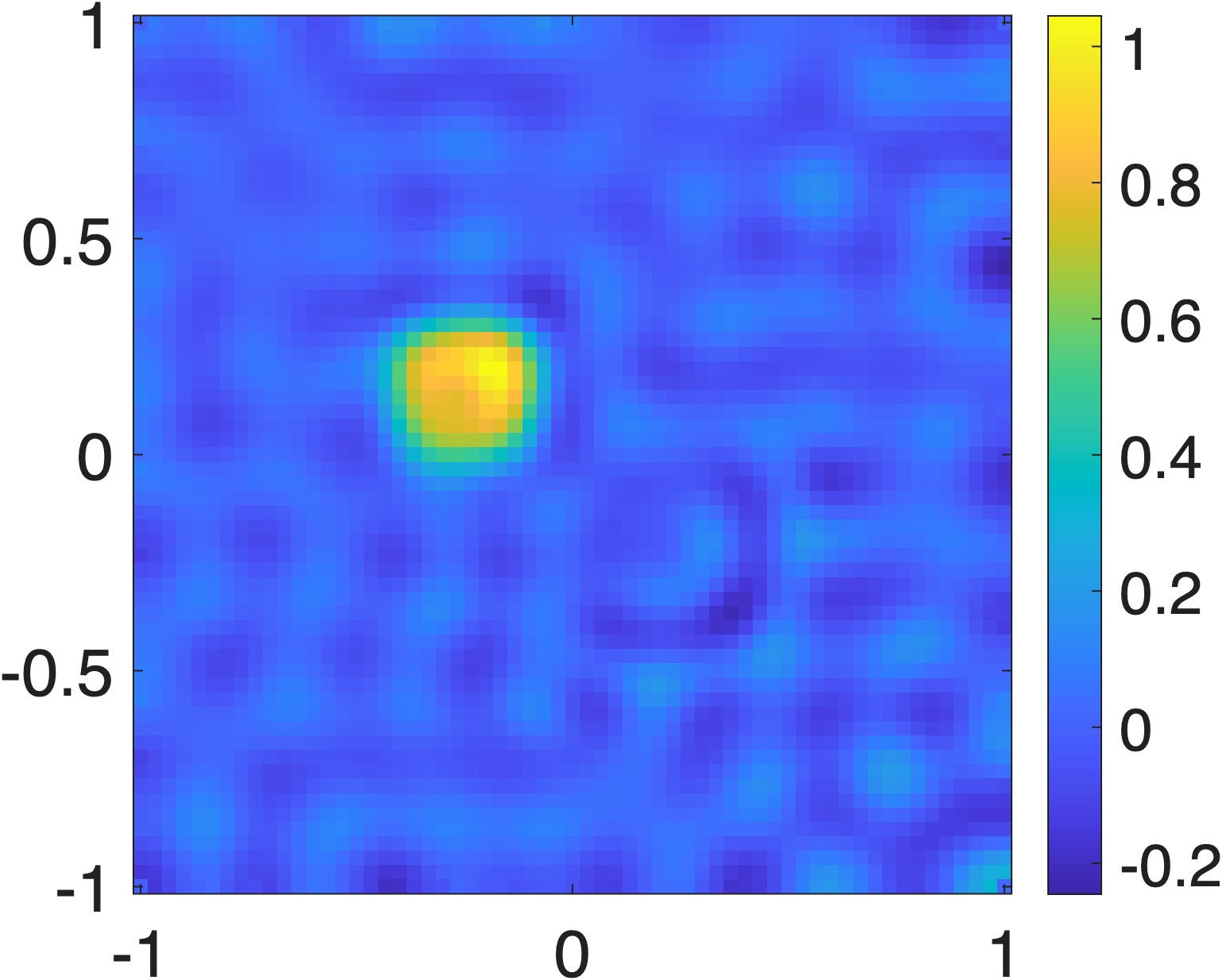}
}
\hfill
\subfloat[Computed imaginary part $\Im(u^{0,\mathrm{comp}})$. \label{fig:case1_comp_imag}]{
    \includegraphics[width=0.31\textwidth]{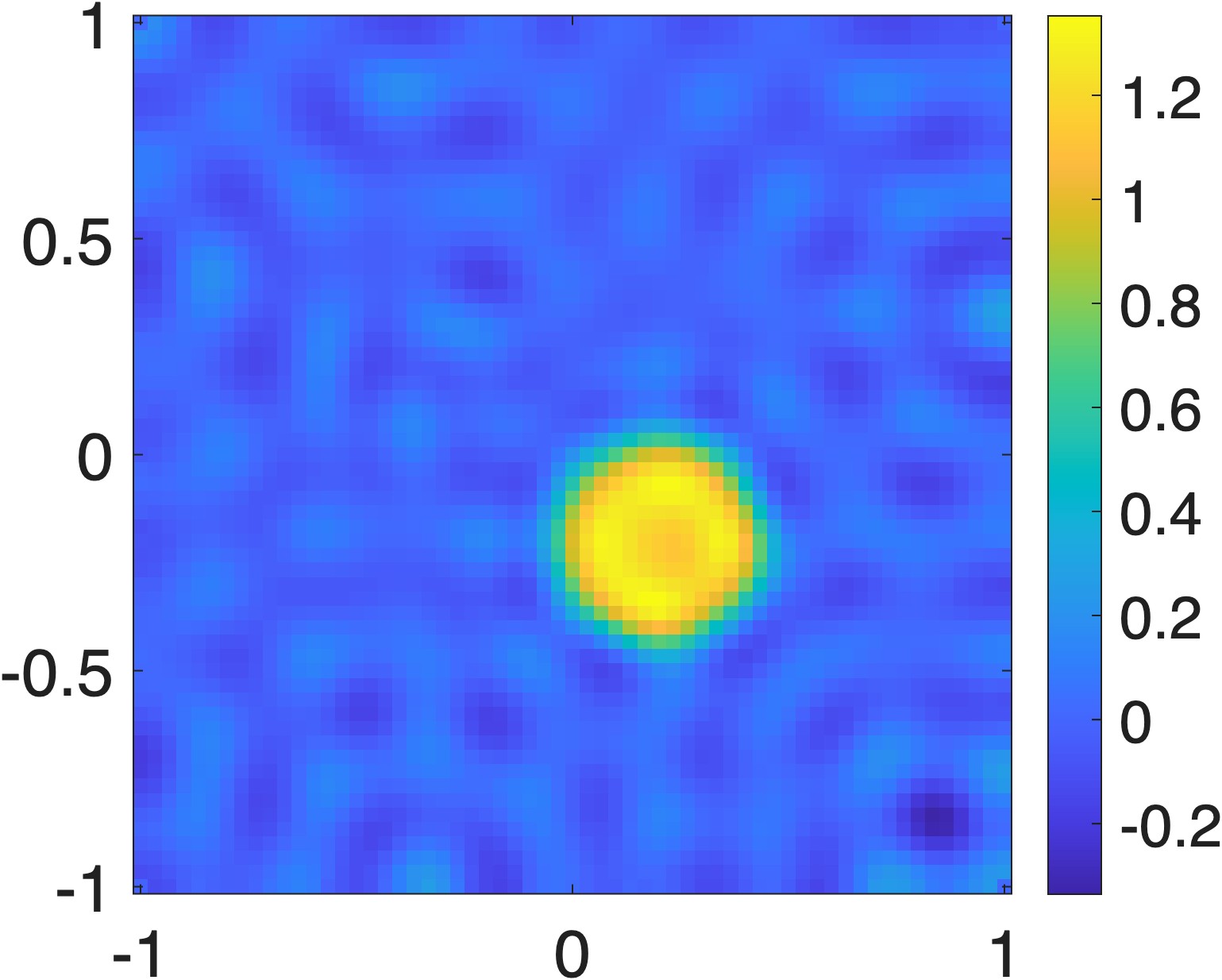}
}
\hfill
\subfloat[Dimensionless residual. \label{fig:case1_residual}]{
    \includegraphics[width=0.31\textwidth]{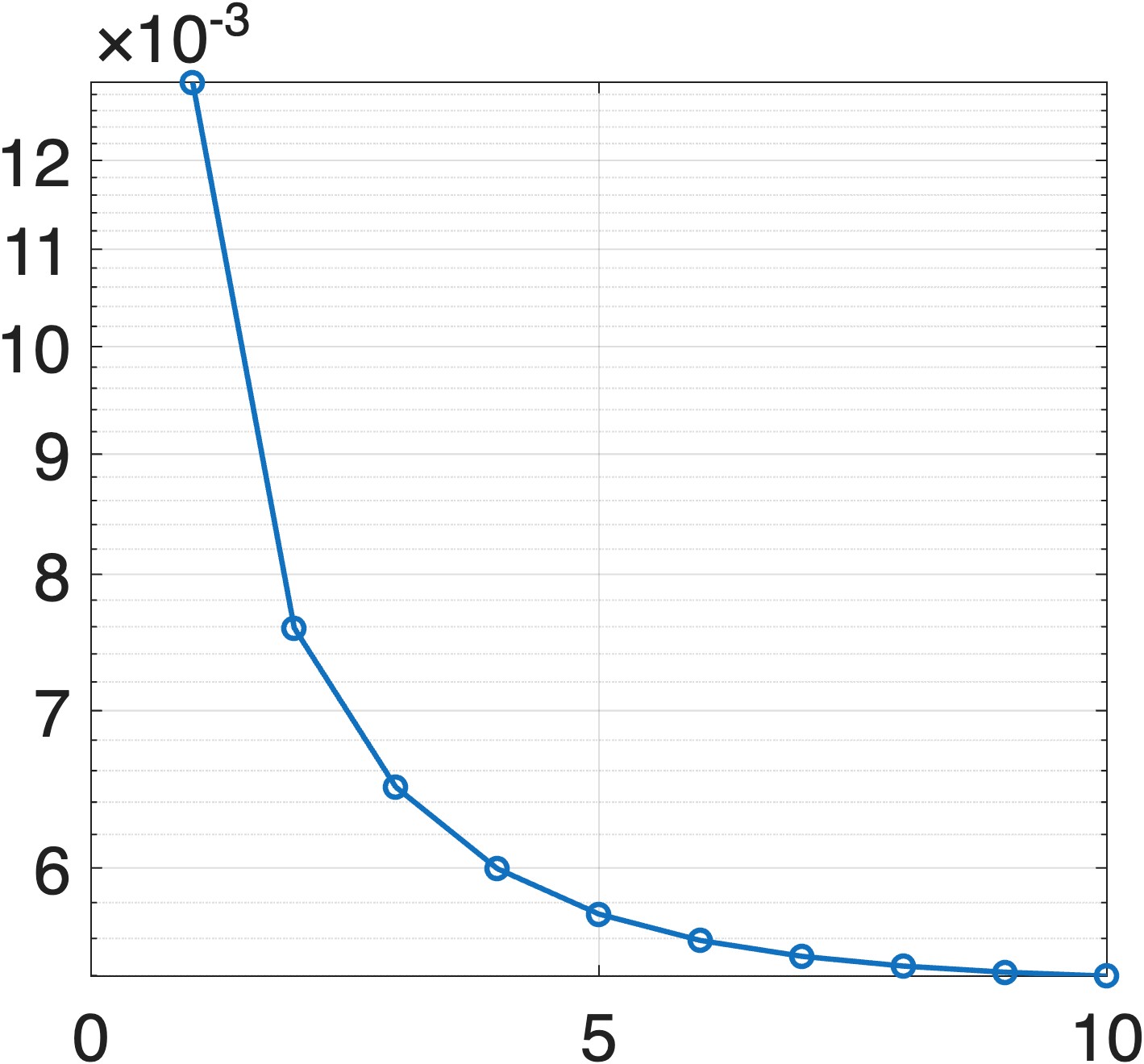}
}
\caption{Test~1. Top row: true real and imaginary parts of the initial wave field, together with the relative change versus the Picard iteration. Bottom row: computed real and imaginary parts, together with the dimensionless residual versus the Picard iteration.}
\label{fig:case1_all}
\end{figure}

Figure~\ref{fig:case1_all} shows that the proposed method remains effective even in the presence of $10\%$ noise in the boundary data. Visually, both inclusions are reconstructed at the correct locations, and their supports are captured well. The real part is recovered near $(-0.25,0.15)$, while the imaginary part is clearly identified near $(0.20,-0.20)$. The Picard iteration is also numerically stable: the relative change decreases steadily, and the dimensionless residual decays monotonically to a small level, indicating convergence of the algorithm. Quantitatively, the maximum value of the reconstructed real part is $1.048345$, compared with the true amplitude $1$, which corresponds to a relative amplitude error of $4.83\%$. For the imaginary part, the reconstructed maximum is $1.351354$, compared with the true amplitude $1.5$, giving a relative amplitude error of $9.91\%$. Thus, despite the presence of $10\%$ noise, the method still yields accurate reconstructions of both the geometry and the amplitudes of the two inclusions.

To quantify the convergence of the Picard iteration, we use the relative change and the dimensionless residual, which are displayed in Figures~\ref{fig:case1_relchange} and \ref{fig:case1_residual}, respectively. The relative change at the $k$th Picard iteration is defined by
\begin{equation}\label{6.2}
\mathrm{RelChange}^{(k)}
:=
\frac{
\left\|\mathbf{U}^{(k+1)}-\mathbf{U}^{(k)}\right\|_{[L^2(\Omega)]^{N+1}}
}{
\left\|\mathbf{U}^{(k+1)}\right\|_{[L^2(\Omega)]^{N+1}}
},
\end{equation}
where
\[
\mathbf{U}^{(k)}
=
\left(u_0^{(k)},u_1^{(k)},\ldots,u_N^{(k)}\right)^{\top}
\]
denotes the vector of modal coefficients at the $k$th iteration.
We also define the dimensionless residual by
\begin{equation}\label{6.3}
\mathrm{Res}^{(k)}
:=
\frac{
\left\|
\ii S\mathbf U^{(k)}+\Delta \mathbf U^{(k)}+\mathbf B(\mathbf U^{(k)})
\right\|_{[L^2(\Omega)]^{N+1}}
}{
\max\left\{
\left\|
\Delta \mathbf U^{(k)}
\right\|_{[L^2(\Omega)]^{N+1}},
\,10^{-2}
\right\}
}
\end{equation}
where 
\[
\mathbf{B}\bigl(\mathbf U^{(k)}\bigr)(\x)
:=
\left(
\int_0^T
e^{-2t} q(\x,t)
\left|
\sum_{\ell=0}^{N}u_\ell^{(k)}(\x)\Psi_\ell(t)
\right|^{p-1}
\left(
\sum_{\ell=0}^{N}u_\ell^{(k)}(\x)\Psi_\ell(t)
\right)
\Psi_m(t)\,dt
\right)_{m=0}^N.
\]
The normalization in the definition of $\mathrm{Res}^{(k)}$ makes the residual dimensionless and avoids division by a very small quantity.

{\bf Test 2.} For Test~2, we choose the true initial wave field in the form
\[
u^0(x,y)=u_R^0(x,y)+i\,u_I^0(x,y),
\]
where the real and imaginary parts are defined by simple geometric inclusions. More precisely, the real part consists of two disk-shaped inclusions of amplitude $2$:
\[
u_R^0(x,y)
=
\begin{cases}
2, & (x+0.35)^2+(y-0.15)^2<0.30^2,\\
0, & \text{otherwise},
\end{cases}
+
\begin{cases}
2, & (x-0.30)^2+(y+0.25)^2<0.30^2,\\
0, & \text{otherwise}.
\end{cases}
\]
The imaginary part is a square ring of amplitude $2$, centered at $(0.05,0.05)$, with outer half-width $0.60$ and inner half-width $0.42$:
\[
u_I^0(x,y)
=
\begin{cases}
2, & \max\{|x-0.05|,|y-0.05|\}<0.60
\ \text{and}\
\max\{|x-0.05|,|y-0.05|\}\ge 0.42,\\
0, & \text{otherwise}.
\end{cases}
\]
Thus, the real part contains two separated circular inclusions, while the imaginary part is supported on a square annulus. In this test, we set $p=3$, so that the model becomes the cubic nonlinear Schr\"odinger equation, which arises in important applications including nonlinear optics and Bose--Einstein condensates.

\begin{figure}[ht]
\centering
\subfloat[True real part $\Re(u^{0,\mathrm{true}})$. \label{fig:case2_true_real}]{
    \includegraphics[width=0.31\textwidth]{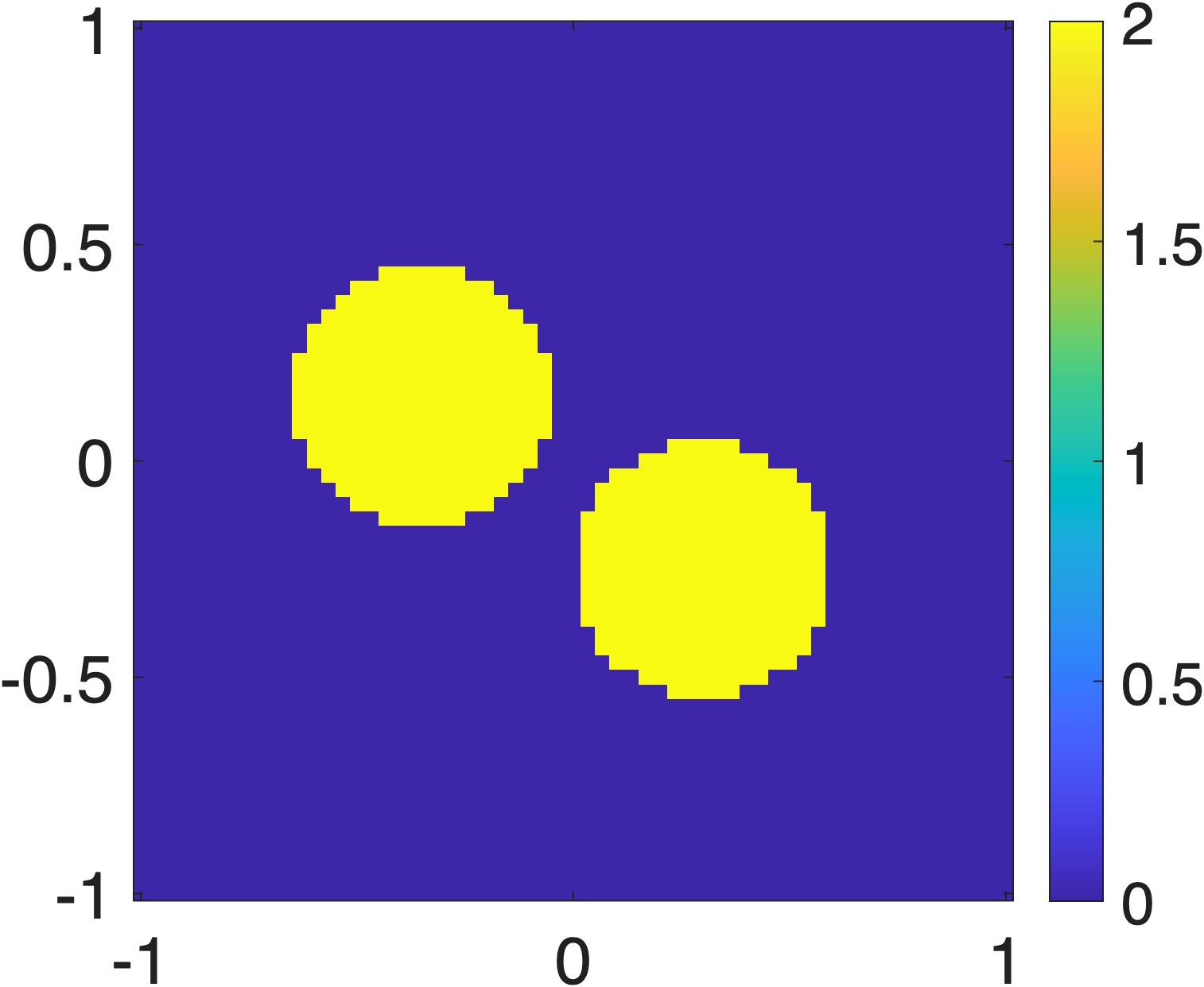}
}
\hfill
\subfloat[True imaginary part $\Im(u^{0,\mathrm{true}})$. \label{fig:case2_true_imag}]{
    \includegraphics[width=0.31\textwidth]{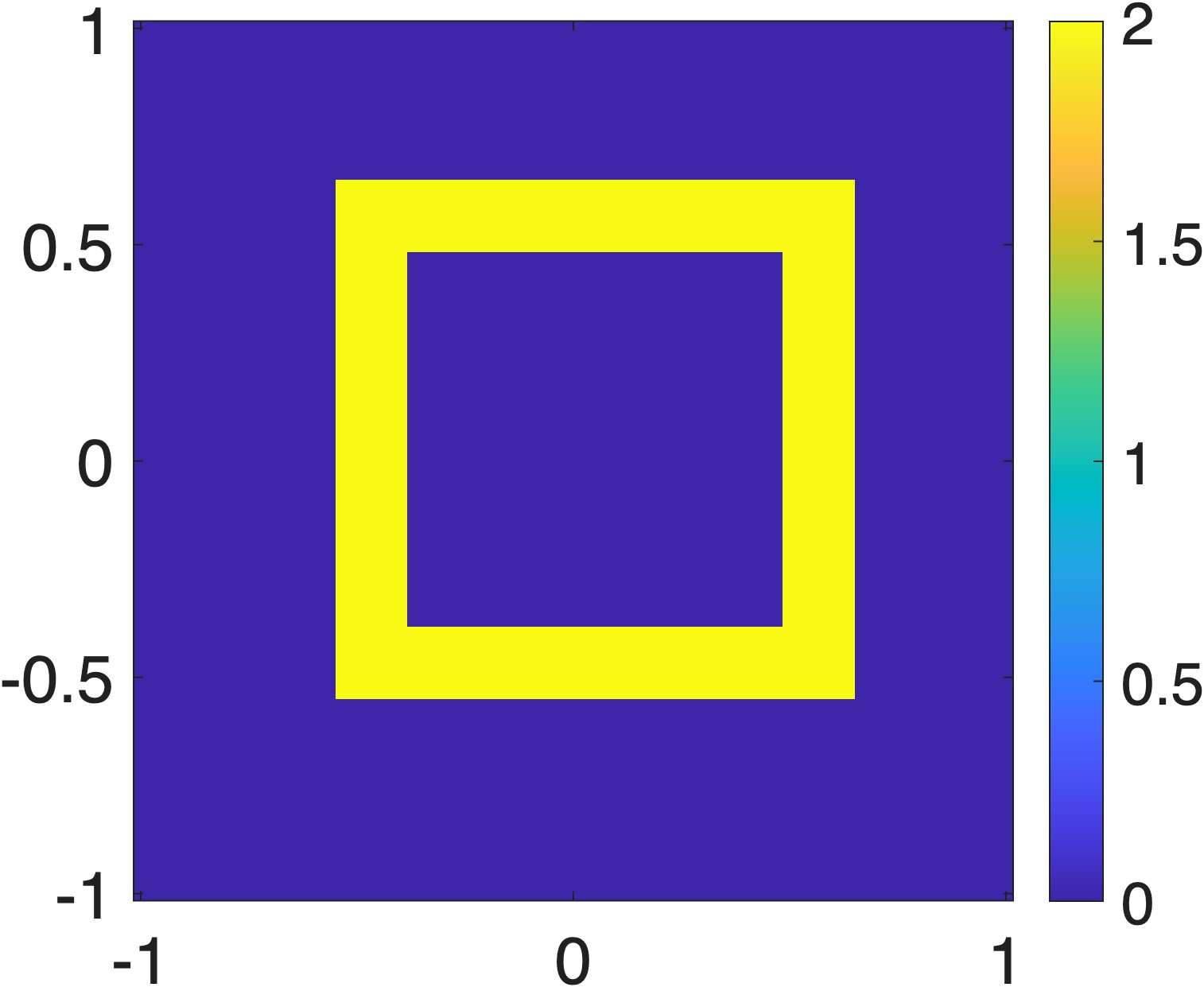}
}
\hfill
\subfloat[Relative change. \label{fig:case2_relchange}]{
    \includegraphics[width=0.31\textwidth]{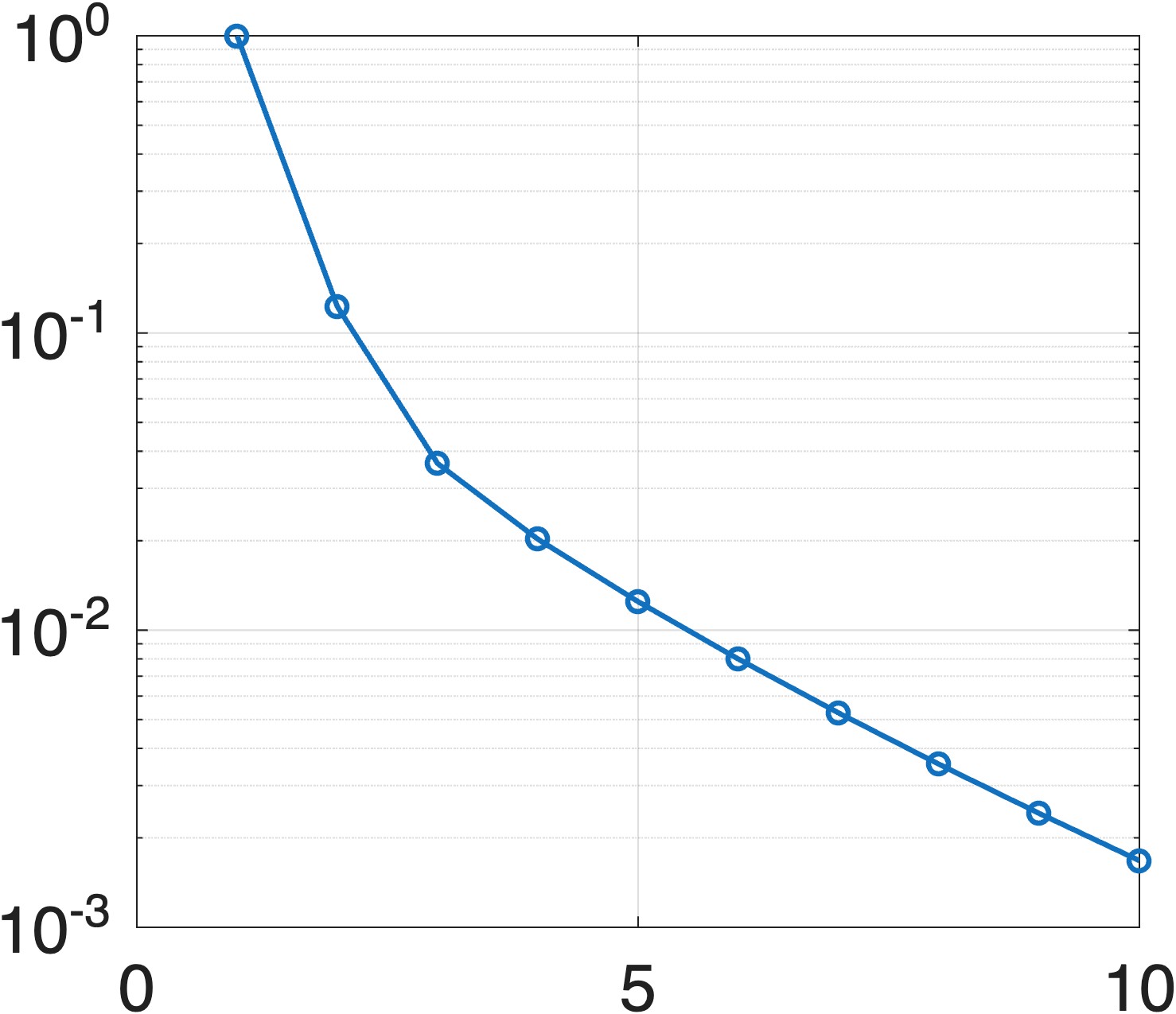}
}

\subfloat[Computed real part $\Re(u^{0,\mathrm{comp}})$. \label{fig:case2_comp_real}]{
    \includegraphics[width=0.31\textwidth]{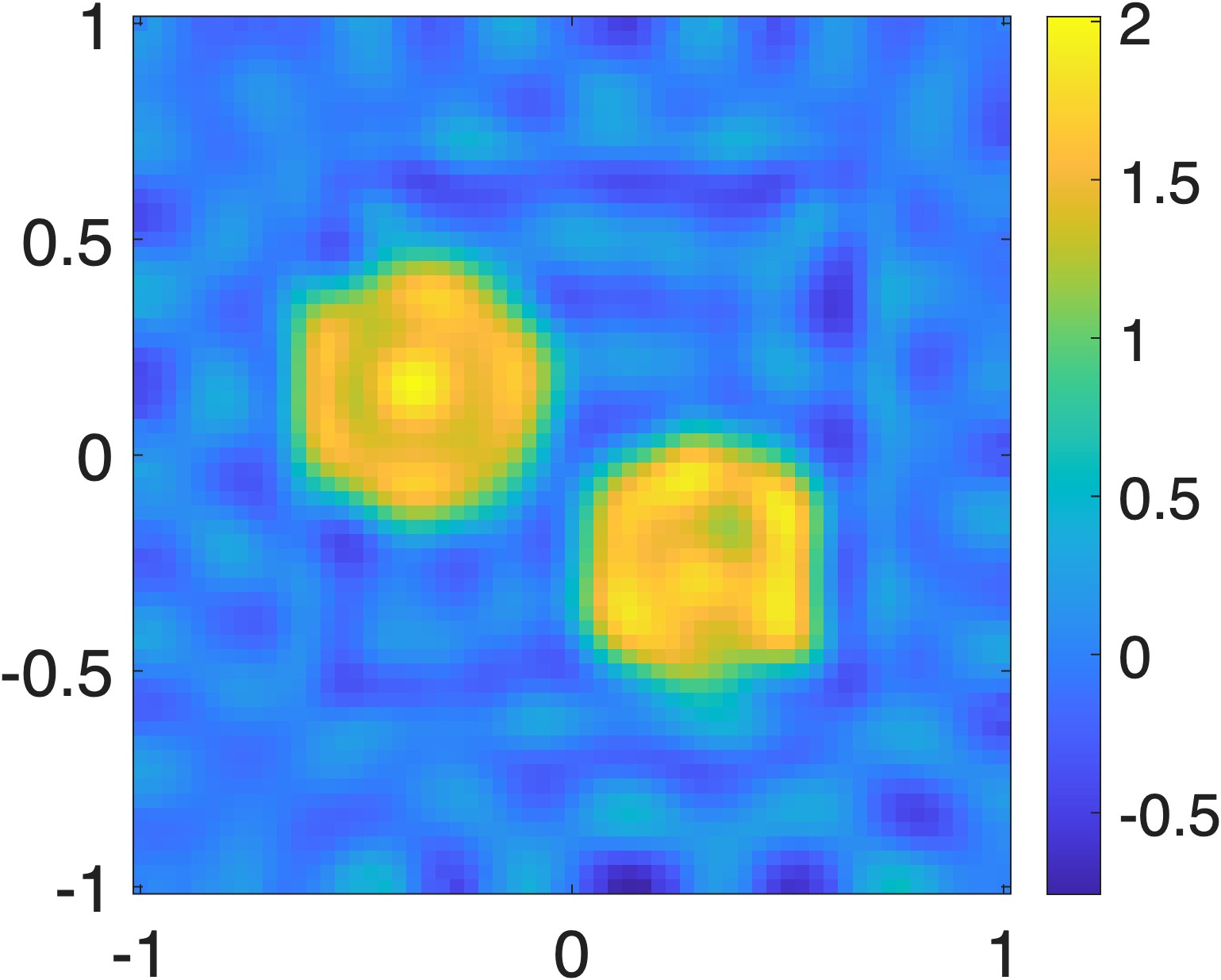}
}
\hfill
\subfloat[Computed imaginary part $\Im(u^{0,\mathrm{comp}})$. \label{fig:case2_comp_imag}]{
    \includegraphics[width=0.31\textwidth]{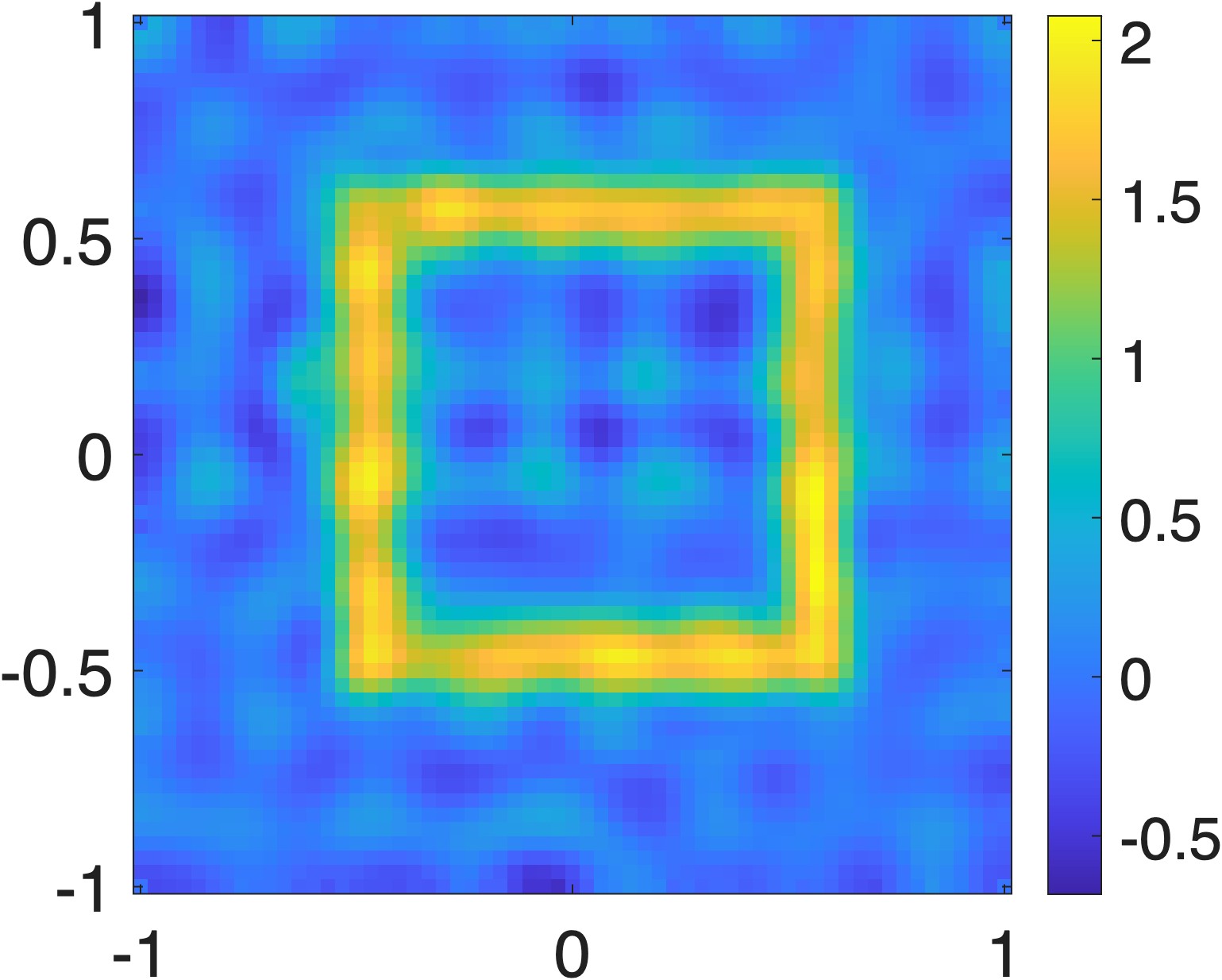}
}
\hfill
\subfloat[Dimensionless residual. \label{fig:case2_residual}]{
    \includegraphics[width=0.31\textwidth]{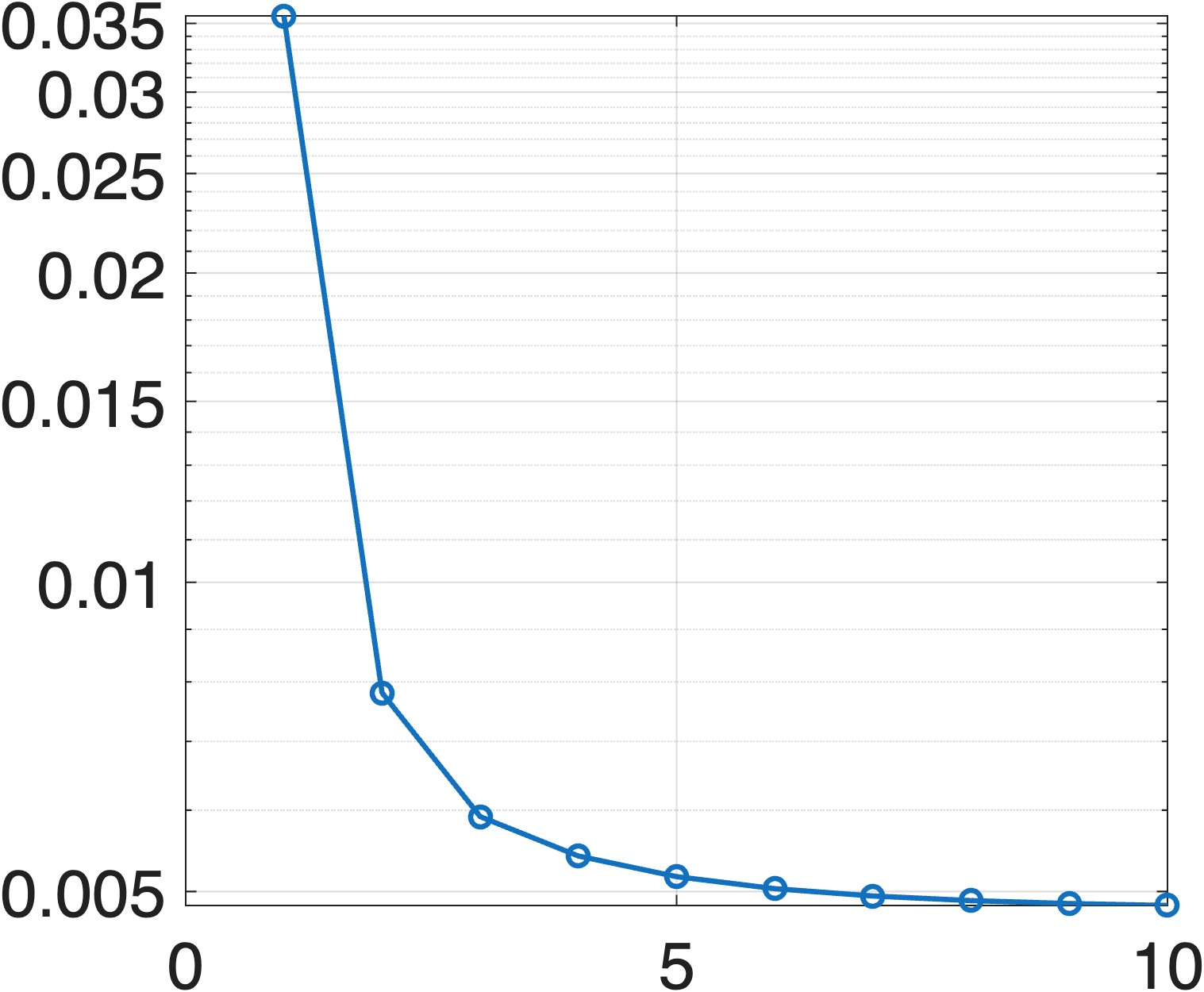}
}
\caption{Test~2. Top row: true real and imaginary parts of the initial wave field, together with the relative change versus the Picard iteration. Bottom row: computed real and imaginary parts, together with the dimensionless residual versus the Picard iteration.}
\label{fig:case2_all}
\end{figure}

The numerical results for Test~2 are displayed in Figure~\ref{fig:case2_all}. They show that the proposed method performs very well in this more complicated setting. Visually, the two disk-shaped inclusions in the real part are accurately recovered, with the correct locations, sizes, and amplitudes. The square-ring structure in the imaginary part is also reconstructed clearly, and its geometric shape is well preserved, although a mild background oscillation is still visible in the computed images. The convergence of the Picard iteration is confirmed by the quantities defined in \eqref{6.2} and \eqref{6.3}: the relative change decreases rapidly and monotonically over the iterations, while the dimensionless residual also decays to a small level, indicating that the iterates stabilize and that the reduced nonlinear system is satisfied with increasing accuracy. Quantitatively, the maximum value of the reconstructed real part is $2.015303$, compared with the true amplitude $2$, which corresponds to a relative amplitude error of $0.76515\%$. For the imaginary part, the reconstructed maximum is $2.077241$, compared with the true amplitude $2$, giving a relative amplitude error of $3.86205\%$. These results show that the method can recover both the geometry and the amplitudes of the true initial wave field with high accuracy.

{\bf Test 3.} For Test~3, we choose the true initial wave field in the form
\[
u^0(x,y)=u_R^0(x,y)+i\,u_I^0(x,y),
\]
where the real and imaginary parts have different geometric structures. The real part is an annulus centered at the origin:
\[
u_R^0(x,y)=
\begin{cases}
1, & 0.24\le \sqrt{x^2+y^2}\le 0.52,\\
0, & \text{otherwise}.
\end{cases}
\]
The imaginary part is chosen in the shape of the letter N, and is defined as
\[
u_I^0(x,y)=
\begin{cases}
1, & -0.42\le x\le -0.24,\ -0.42\le y\le 0.42,\\
1, & 0.20\le x\le 0.38,\ -0.42\le y\le 0.42,\\
1, & |y+1.55x+0.06|\le 0.10,\ -0.30\le x\le 0.26,\ -0.42\le y\le 0.42,\\
0, & \text{otherwise}.
\end{cases}
\]
Thus, the real part is supported on a circular ring, while the imaginary part consists of two vertical bars connected by a diagonal strip, forming an N-shaped inclusion. In this test we choose $p=5$, corresponding to a quintic nonlinear Schr\"odinger model, which is relevant in certain settings involving higher-order nonlinear effects.

\begin{figure}[ht]
\centering
\subfloat[True real part $\Re(u^{0,\mathrm{true}})$. \label{fig:case3_true_real}]{
    \includegraphics[width=0.31\textwidth]{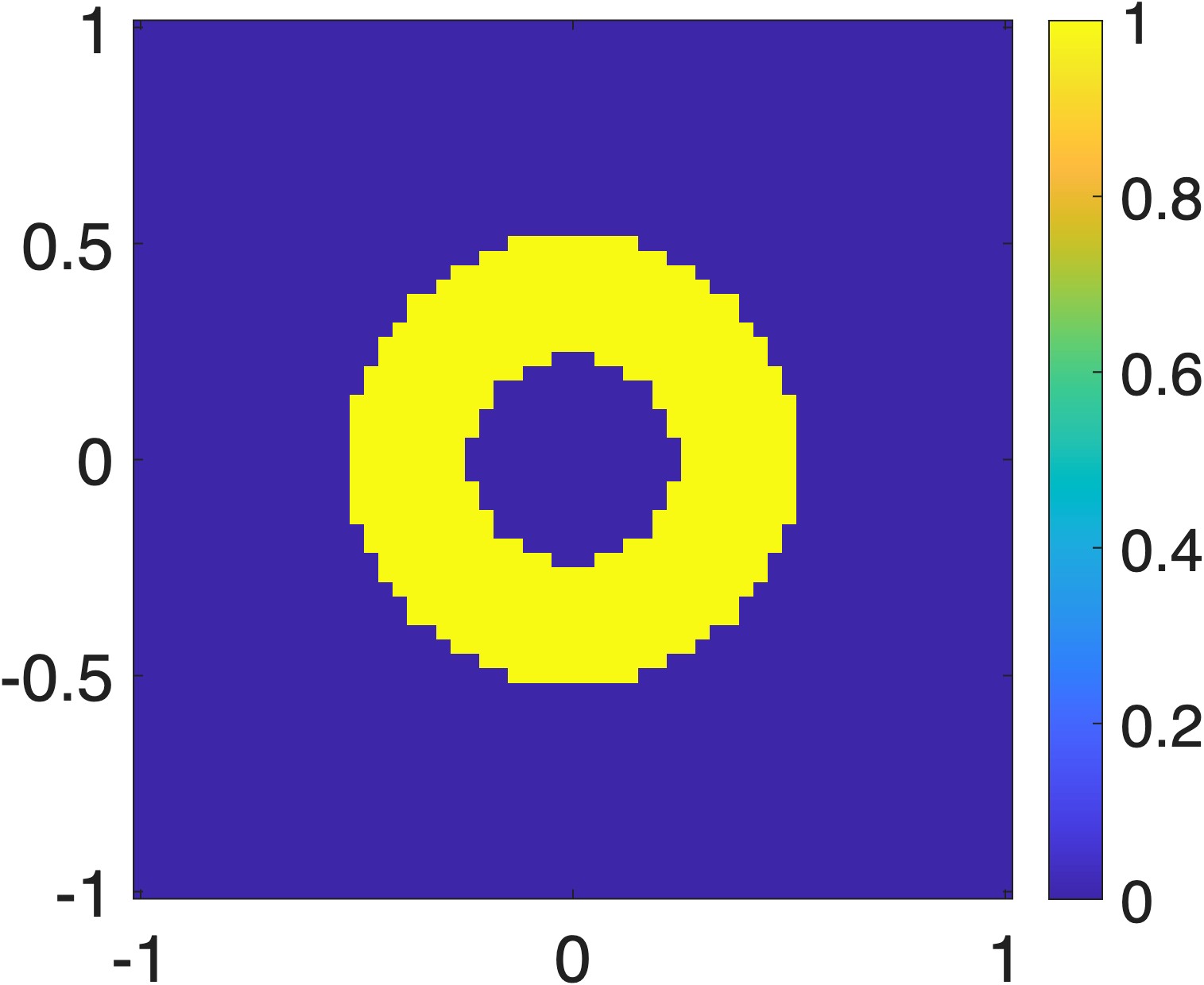}
}
\hfill
\subfloat[True imaginary part $\Im(u^{0,\mathrm{true}})$. \label{fig:case3_true_imag}]{
    \includegraphics[width=0.31\textwidth]{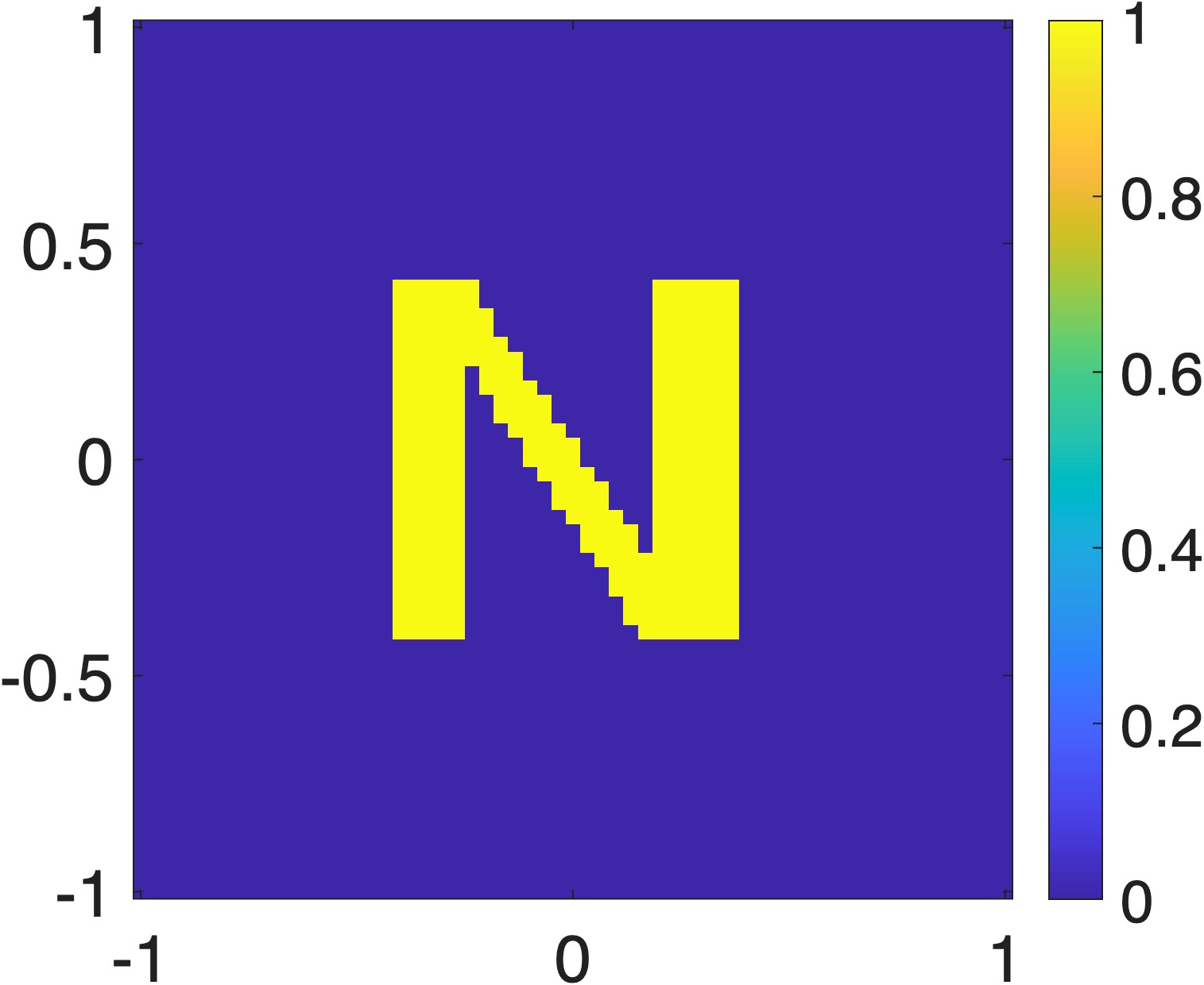}
}
\hfill
\subfloat[Relative change. \label{fig:case3_relchange}]{
    \includegraphics[width=0.31\textwidth]{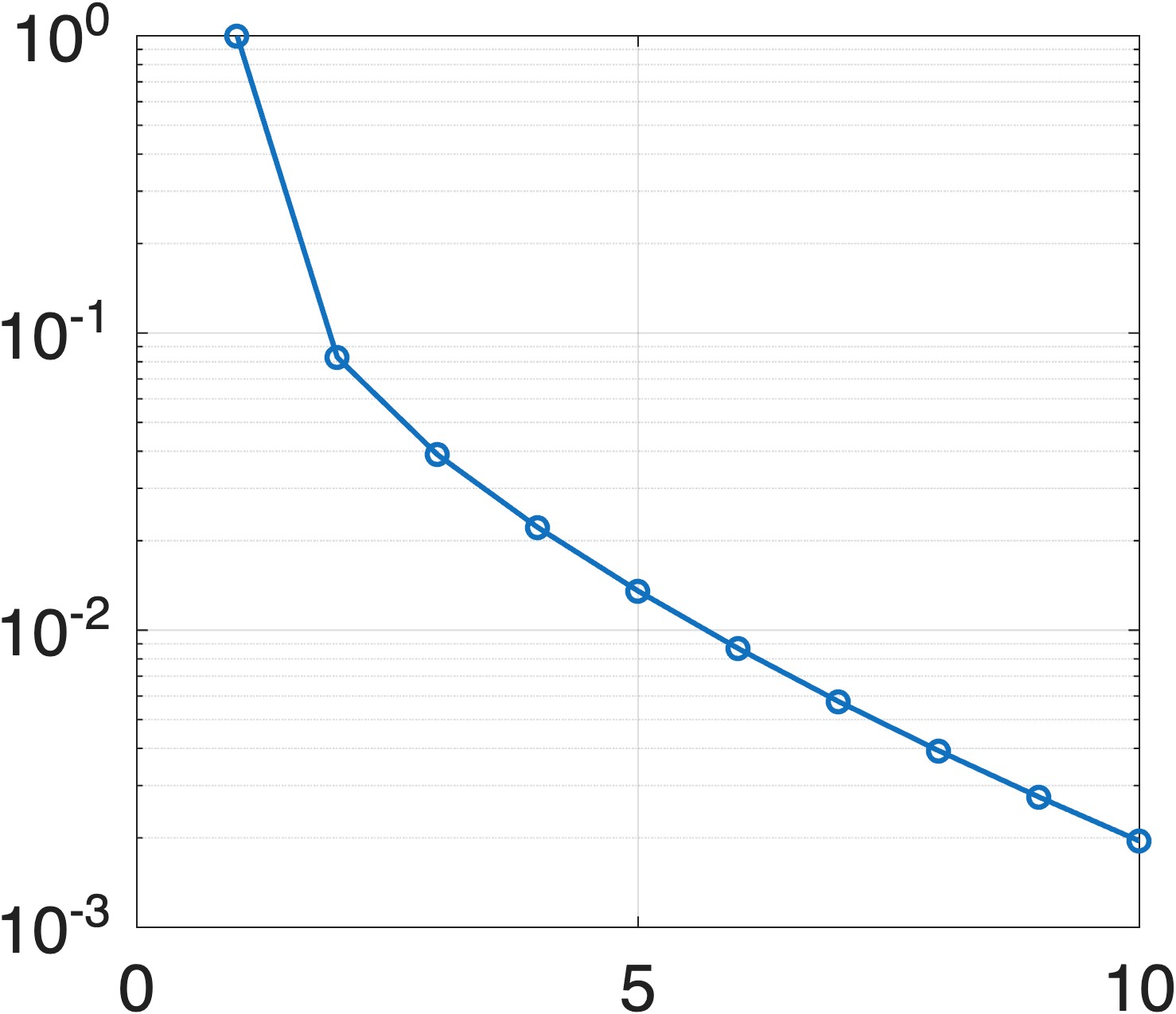}
}

\subfloat[Computed real part $\Re(u^{0,\mathrm{comp}})$. \label{fig:case3_comp_real}]{
    \includegraphics[width=0.31\textwidth]{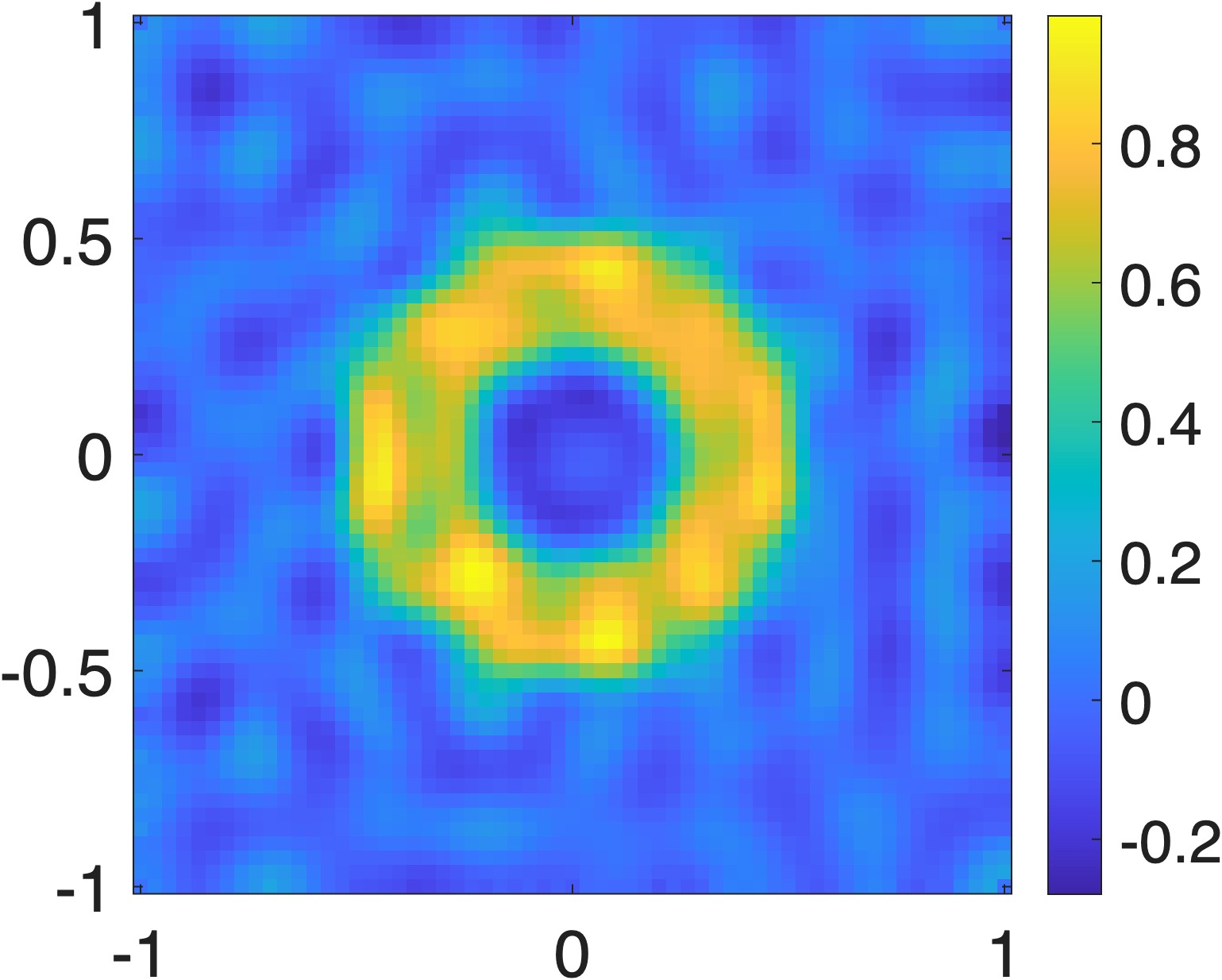}
}
\hfill
\subfloat[Computed imaginary part $\Im(u^{0,\mathrm{comp}})$. \label{fig:case3_comp_imag}]{
    \includegraphics[width=0.31\textwidth]{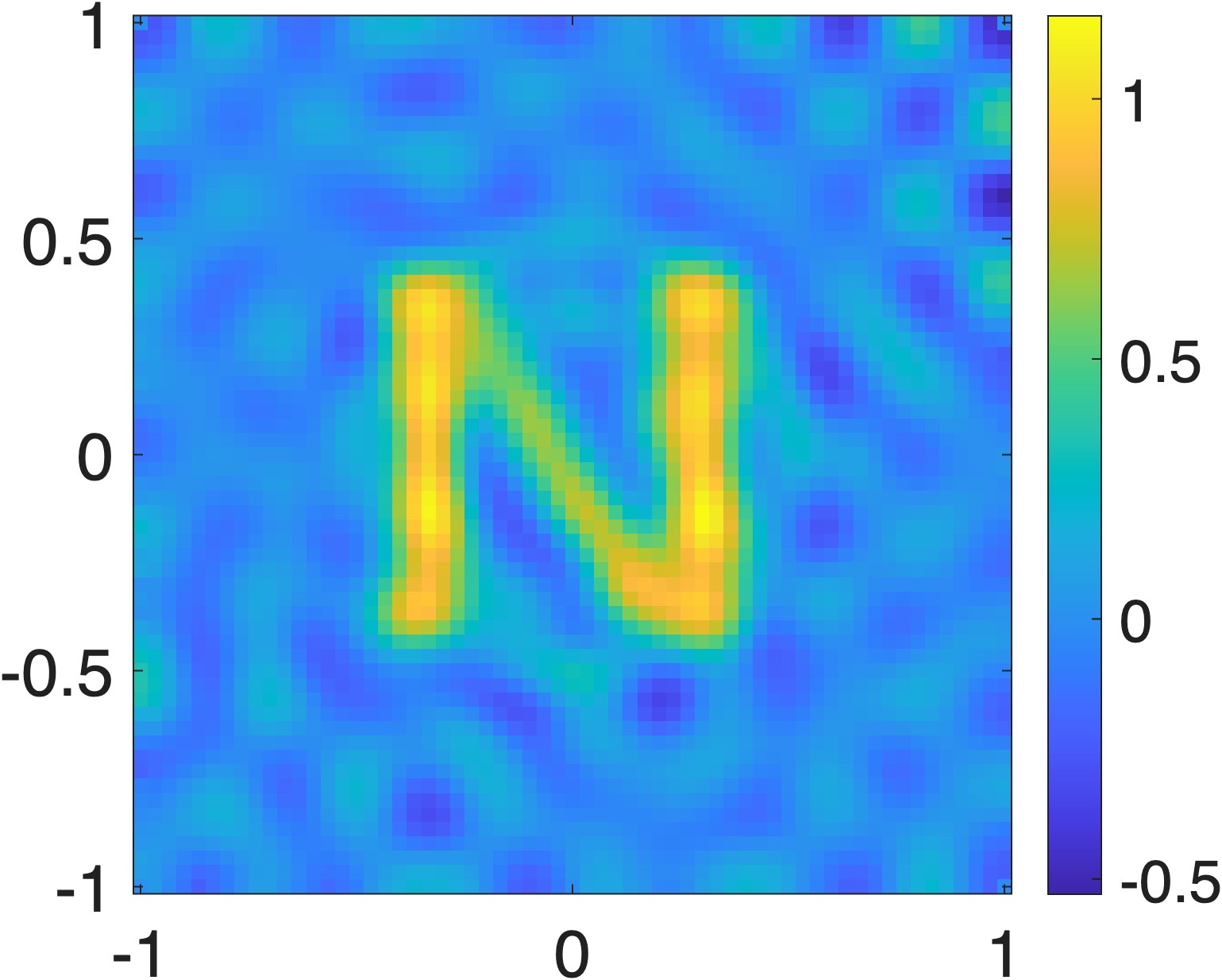}
}
\hfill
\subfloat[Dimensionless residual. \label{fig:case3_residual}]{
    \includegraphics[width=0.31\textwidth]{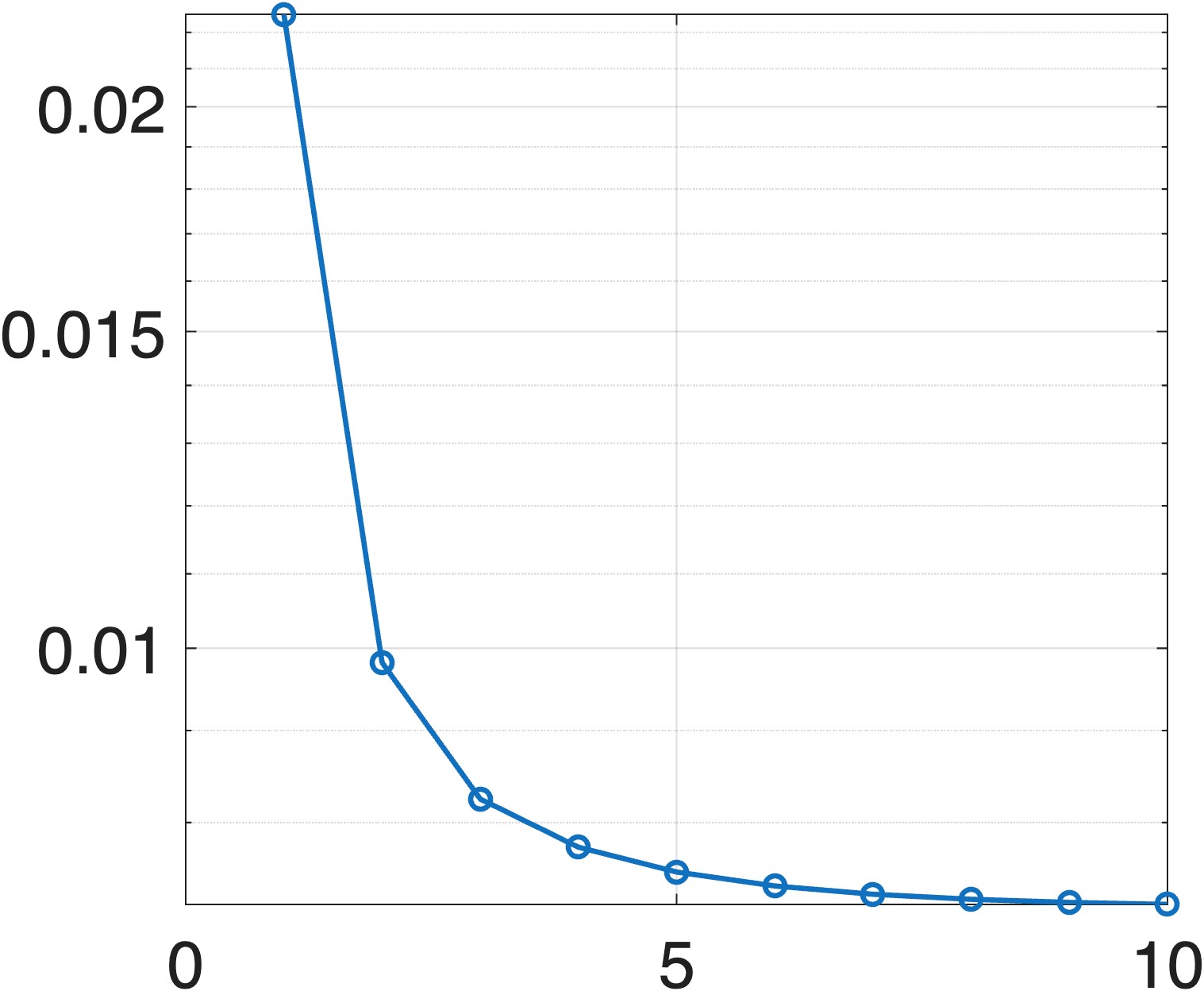}
}
\caption{Test~3. Top row: true real and imaginary parts of the initial wave field, together with the relative change versus the Picard iteration. Bottom row: computed real and imaginary parts, together with the dimensionless residual versus the Picard iteration.}
\label{fig:case3_all}
\end{figure}

The numerical results for Test~3 are shown in Figure~\ref{fig:case3_all}. They indicate that the proposed method performs well even for this more intricate geometry. Visually, the annular structure in the real part is clearly recovered, with the correct location, thickness, and circular shape. The N-shaped inclusion in the imaginary part is also reconstructed successfully: the two vertical bars and the connecting diagonal segment are all visible and match the true profile well. The convergence of the Picard iteration is stable, as evidenced by the steady decay of both the relative change and the dimensionless residual throughout the iterations. Quantitatively, the maximum value of the reconstructed real part is $0.9835456$, compared with the true amplitude $1$, which corresponds to a relative amplitude error of $1.65\%$. For the imaginary part, the reconstructed maximum is $1.159482$, compared with the true amplitude $1$, giving a relative amplitude error of $15.95\%$. Overall, the method accurately captures the main geometric features of both components and exhibits robust convergence in this quintic case.

\begin{Remark}
For all three numerical tests, the reconstruction results are very good, despite a noise level of $10\%$ in the boundary data. In particular, the proposed method remains stable across different geometries and different nonlinear exponents, while still recovering the main shapes, locations, and amplitudes of the true initial wave fields with good accuracy. These numerical experiments indicate that the Carleman--Picard method is both effective and robust in the presence of substantial measurement noise.
\end{Remark}

\begin{Remark}
In this paper, we set $\Psi_n(t)=e^{t}Q_n(t)$, where $\{Q_n\}_{n\ge 0}$ are the shifted Legendre polynomials on $(0,T)$. Although the factor $e^{t}$ is canceled by the weight $e^{-2t}$ in the inner product of $L^2_{e^{-2t}}(0,T)$, it becomes essential when time derivatives appear. Indeed,
\[
\Psi_n'(t)=\frac{d}{dt}\bigl(e^{t}Q_n(t)\bigr)=e^{t}\bigl(Q_n(t)+Q_n'(t)\bigr),
\]
which is not identically zero on $(0,T)$. Therefore, in the expansion \eqref{4}, namely,
\[
u_t(\x,t)=\sum_{n=0}^{\infty} u_n(\x)\Psi_n'(t),
\]
every coefficient $u_n(\x)$ contributes to the time derivative, and this contribution is retained in the reduced system \eqref{15}.

By contrast, if one uses the standard shifted Legendre basis $\{Q_n\}_{n\ge 0}$ without the exponential factor, then the lowest mode satisfies $Q_0'(t)=0$ for all $t$, since $Q_0$ is constant. As a consequence, the corresponding coefficient $u_0(\x)$ does not appear in the derivative expansion through the term involving $Q_0'$, which may weaken the coupling between modes and lead to a loss of information in the time-reduced model. The exponential factor avoids this difficulty by ensuring that even the lowest time mode remains visible in the differentiated expansion.
\end{Remark}

\subsection{Comparison with a Direct Unsupervised PINN Baseline}

For comparison, we also implemented a direct unsupervised physics-informed neural network (PINN) baseline for the original inverse problem \eqref{maineqn}--\eqref{data}, without using the Legendre polynomial-exponential time-dimensional reduction and Carleman weight functions. In this approach, the neural network directly approximates the complex-valued wave field
\[
u(\x,t)=u_R(\x,t)+\ii\,u_I(\x,t),
\qquad (\x,t)\in \Omega\times(0,T),
\]
where $u_R$ and $u_I$ denote the real and imaginary parts, respectively. The input of the network is the three-dimensional variable $(x,y,t)$, and the output consists of the two real-valued components $u_R(x,y,t)$ and $u_I(x,y,t)$.

The network is a fully connected feedforward neural network with six hidden layers, each of width $256$. Its architecture is
\[
3 \to 256 \to 256 \to 256 \to 256 \to 256 \to 256 \to 2,
\]
where the input dimension $3$ corresponds to $(x,y,t)$ and the output dimension $2$ corresponds to $(u_R,u_I)$. The activation function in every hidden layer is $\tanh$, so that the network is sufficiently smooth for automatic differentiation of the first- and second-order derivatives appearing in the Schr\"odinger equation.

The PINN is trained by minimizing a loss function consisting of three parts: the residual of the nonlinear Schr\"odinger equation in the interior of $\Omega\times(0,T)$, the homogeneous Dirichlet boundary condition on $\partial\Omega\times(0,T)$, and the measured Neumann boundary data on $\partial\Omega\times(0,T)$. The total loss is defined by
\[
\mathcal{L}(\theta)
=
\omega_{\mathrm{int}}\mathcal{L}_{\mathrm{int}}(\theta)
+
\omega_{\mathrm{D}}\mathcal{L}_{\mathrm{D}}(\theta)
+
\omega_{\mathrm{N}}\mathcal{L}_{\mathrm{N}}(\theta),
\]
where, in our implementation,
\[
\omega_{\mathrm{int}}=1,
\qquad
\omega_{\mathrm{D}}=20,
\qquad
\omega_{\mathrm{N}}=20.
\]
These weights are selected by manual tuning so as to achieve satisfactory numerical results.
The interior residual loss is given by
\[
\mathcal{L}_{\mathrm{int}}(\theta)
=
\frac{1}{N_{\mathrm{int}}}
\sum_{j=1}^{N_{\mathrm{int}}}
\left|
\ii\,\partial_t u_\theta(\x_j,t_j)
+
\Delta u_\theta(\x_j,t_j)
+
q(\x_j,t_j)\,
|u_\theta(\x_j,t_j)|^{p-1}u_\theta(\x_j,t_j)
\right|^2,
\]
the Dirichlet boundary loss is
\[
\mathcal{L}_{\mathrm{D}}(\theta)
=
\frac{1}{N_{\mathrm{D}}}
\sum_{j=1}^{N_{\mathrm{D}}}
|u_\theta(\x_j,t_j)|^2,
\]
and the Neumann boundary loss is
\[
\mathcal{L}_{\mathrm{N}}(\theta)
=
\frac{1}{N_{\mathrm{N}}}
\sum_{j=1}^{N_{\mathrm{N}}}
\left|
\partial_\nu u_\theta(\x_j,t_j)-f(\x_j,t_j)
\right|^2.
\]
Here, $u_\theta$ denotes the network output associated with the parameter vector $\theta$.

All spatial and temporal derivatives are computed by automatic differentiation. The network is trained using the Adam optimizer. In our implementation, the learning rate is set to $10^{-3}$, the number of training epochs is $4000$, and the collocation batch sizes are $1024$ for interior points, $512$ for Dirichlet boundary points, and $512$ for Neumann boundary points. After training, the reconstructed initial wave field is obtained by evaluating the trained network at time $t=0$, namely,
\[
u^{0,\mathrm{comp}}(\x)=u_\theta(\x,0).
\]

As shown in Figure \ref{fig:pinn_case1_compare}, for the data from Test 1, the direct unsupervised PINN baseline is able to recover the approximate locations of both inclusions. In particular, the imaginary part is reconstructed at roughly the correct location and with the correct qualitative shape. However, the recovered real part still contains visible artifacts, including a spurious negative region, and the amplitudes are not captured as accurately as those produced by the proposed method. By contrast, the Carleman--Picard method yields reconstructions with more accurate geometry and amplitude for the same test. Therefore, for the present inverse problem and under our implementation, the Carleman--Picard method appears to be more effective than the direct unsupervised PINN baseline. This comparison is intended only as an illustrative baseline, rather than a comprehensive benchmark against optimized PINN methods. Nevertheless, the PINN experiment is still informative, since it shows that residual-based neural-network training can recover meaningful qualitative features of the inclusions.

\begin{figure}[ht]
\centering
\subfloat[True real part $\Re(u^{0,\mathrm{true}})$. \label{fig:case1_compare_true_real}]{
    \includegraphics[width=0.31\textwidth]{case1_u0_true_real.jpg}
}
\hfill
\subfloat[Recovered PINN real part $\Re(u^{0,\mathrm{comp}}_{\mathrm{PINN}})$. \label{fig:case1_compare_pinn_real}]{
    \includegraphics[width=0.31\textwidth]{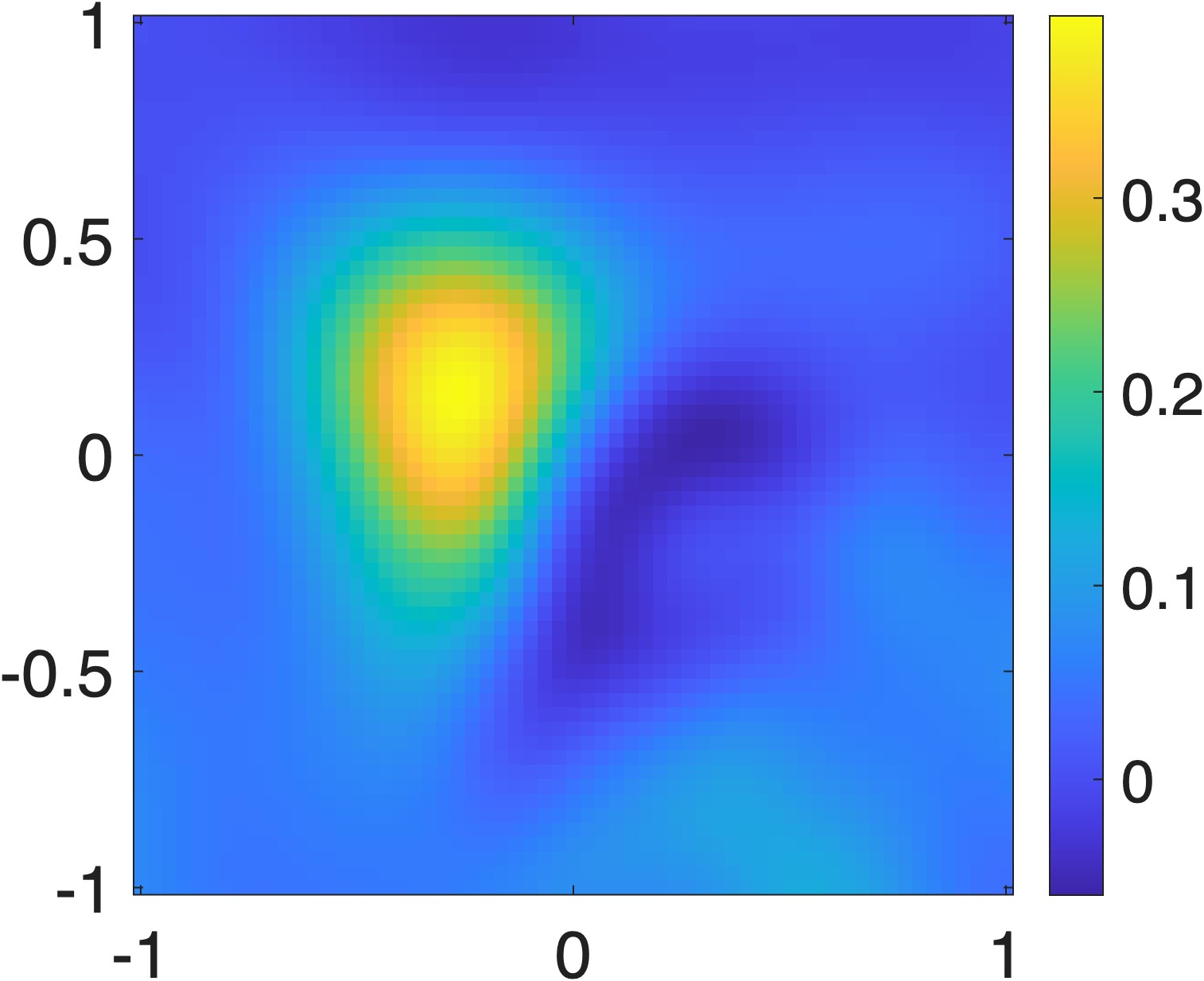}
}
\hfill
\subfloat[Recovered real part by our method $\Re(u^{0,\mathrm{comp}})$. \label{fig:case1_compare_cp_real}]{
    \includegraphics[width=0.31\textwidth]{case1_u0_comp_real.jpg}
}

\subfloat[True imaginary part $\Im(u^{0,\mathrm{true}})$. \label{fig:case1_compare_true_imag}]{
    \includegraphics[width=0.31\textwidth]{case1_u0_true_imag.jpg}
}
\hfill
\subfloat[Recovered PINN imaginary part $\Im(u^{0,\mathrm{comp}}_{\mathrm{PINN}})$. \label{fig:case1_compare_pinn_imag}]{
    \includegraphics[width=0.31\textwidth]{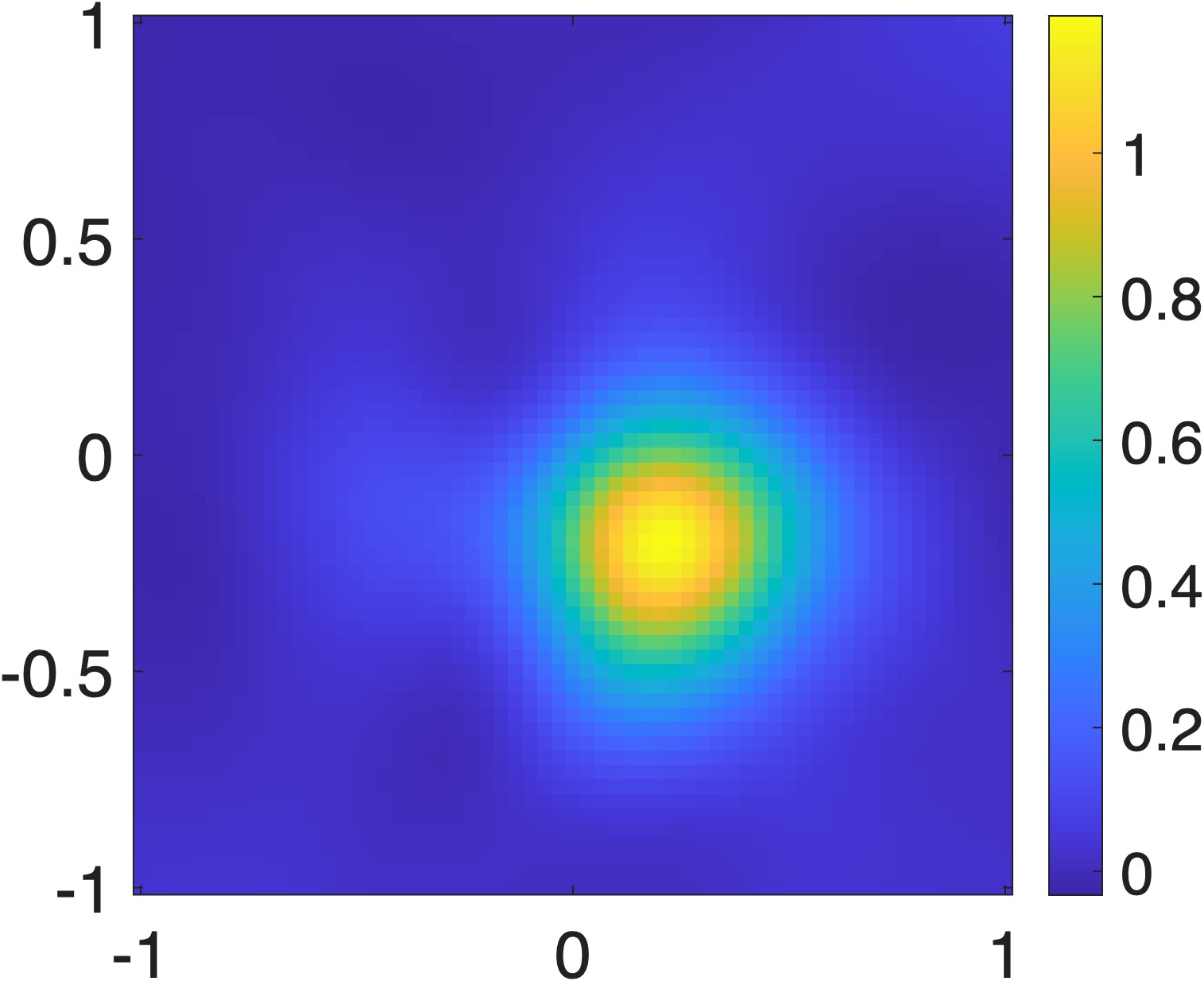}
}
\hfill
\subfloat[Recovered imaginary part by our method $\Im(u^{0,\mathrm{comp}})$. \label{fig:case1_compare_cp_imag}]{
    \includegraphics[width=0.31\textwidth]{case1_u0_comp_imag.jpg}
}
\caption{Comparison of reconstructions for Test~1. The second column shows the result of the direct unsupervised PINN baseline, while the third column shows the reconstruction produced by the proposed Carleman--Picard method.}
\label{fig:pinn_case1_compare}
\end{figure}
\section{Concluding remarks} \label{sec7}

In this paper, we studied an inverse initial-data problem for a nonlinear Schr\"odinger equation with lateral Neumann measurements. The main idea was to combine a Legendre-polynomial-exponential-time dimensional reduction with a Carleman-based contraction principle. This approach transforms the original inverse problem into a reduced nonlinear elliptic system for the time-expansion coefficients, and then solves that system using a globally convergent Picard iteration.

On the theoretical side, we constructed a contraction map on a suitable admissible set and proved that its unique fixed point is consistent with the exact reduced solution. We also established a stability estimate in the noisy-data case. In particular, the error bound does not require any special structural assumption on the noise, which distinguishes the present framework from several standard Carleman-based approaches.

On the numerical side, we proposed a practical reconstruction algorithm and tested it on several examples with different geometries and nonlinear exponents. The numerical results show that the method is stable and accurate, even when the data contain a significant level of noise. We also presented a comparison with a direct unsupervised PINN baseline. In our numerical experiments, that approach was able to recover some qualitative features of the inclusions, but the proposed Carleman--Picard method produced more accurate reconstructions.

%\bibliographystyle{plain}
%\bibliography{../../../../mybib}
%%\bibliography{NLS1}

\begin{thebibliography}{10}

\bibitem{AbneyLeNguyenPeters}
Ray~Abney, Thuy~T. Le, Loc~H. Nguyen, and Cam~Peters.
\newblock A {C}arleman-{P}icard approach for reconstructing zero-order
  coefficients in parabolic equations with limited data.
\newblock {\em Applied Mathematics and Computation}, 494:129286, 2025.

\bibitem{arrepu2025}
Pranav Arrepu and Hanming Zhou.
\newblock Stable determination of coefficients in nonlinear dynamical
  {S}chr\"odinger equations by {C}arleman estimates.
\newblock Preprint, arXiv:2508.07231, 2025.

\bibitem{baudouin2008}
Lucie Baudouin and Alberto Mercado.
\newblock An inverse problem for {S}chr\"odinger equations with discontinuous
  main coefficient.
\newblock {\em Applicable Analysis}, 87(10--11):1145--1165, 2008.

\bibitem{beilina2012}
Larisa Beilina and Michael~V. Klibanov.
\newblock {\em Approximate Global Convergence and Adaptivity for Coefficient
  Inverse Problems}.
\newblock Springer, New York, 2012.

\bibitem{bellassoued_benfraj2020}
Mourad Bellassoued and Oumaima Ben~Fraj.
\newblock Stability estimates for time-dependent coefficients appearing in the
  magnetic {S}chr\"odinger equation from arbitrary boundary measurements.
\newblock {\em Inverse Problems and Imaging}, 14(5):841--865, 2020.

\bibitem{bellassoued_choulli2009}
Mourad Bellassoued and Mourad Choulli.
\newblock Logarithmic stability in the dynamical inverse problem for the
  {S}chr\"odinger equation by arbitrary boundary observation.
\newblock {\em Journal de Math\'ematiques Pures et Appliqu\'ees}, 91:233--255,
  2009.

\bibitem{bellassoued_choulli2010}
Mourad Bellassoued and Mourad Choulli.
\newblock Stability estimate for an inverse problem for the magnetic
  {S}chr\"odinger equation from the {D}irichlet-to-{N}eumann map.
\newblock {\em Journal of Functional Analysis}, 258(1):161--195, 2010.

\bibitem{bellassouedkiansoccorsi2016}
Mourad Bellassoued, Yavar Kian, and Eric Soccorsi.
\newblock An inverse stability result for non-compactly supported potentials by
  one arbitrary lateral {N}eumann observation.
\newblock {\em Journal of Differential Equations}, 260(10):7535--7562, 2016.

\bibitem{bellassoued2018cylindrical}
Mourad Bellassoued, Yavar Kian, and Eric Soccorsi.
\newblock An inverse problem for the magnetic {S}chr\"odinger equation in
  infinite cylindrical domains.
\newblock {\em Publications of the Research Institute for Mathematical
  Sciences}, 54:679--728, 2018.

\bibitem{benaicha2018}
Ibtissem Ben~A\"icha and Yosra Mejri.
\newblock Simultaneous determination of the magnetic field and the electric
  potential in the {S}chr\"odinger equation by a finite number of boundary
  observations.
\newblock {\em Journal of Inverse and Ill-Posed Problems}, 26(2):201--209,
  2018.

\bibitem{bukhgeim_klibanov1981}
Alexander~L. Bukhgeim and Michael~V. Klibanov.
\newblock Global uniqueness of a class of multidimensional inverse problems.
\newblock {\em Soviet Mathematics Doklady}, 24:244--247, 1981.

\bibitem{cazenave2003}
Thierry Cazenave.
\newblock {\em Semilinear {S}chr\"odinger Equations}, volume~10 of {\em Courant
  Lecture Notes in Mathematics}.
\newblock American Mathematical Society, Providence, RI, 2003.

\bibitem{choullikiansoccorsi2015}
Mourad Choulli, Yavar Kian, and Eric Soccorsi.
\newblock Stable determination of time-dependent scalar potential from boundary
  measurements in a periodic quantum waveguide.
\newblock {\em SIAM Journal on Mathematical Analysis}, 47(6):4536--4558, 2015.

\bibitem{cristofol2011}
Michel Cristofol and Eric Soccorsi.
\newblock Stability estimate in an inverse problem for non-autonomous magnetic
  {S}chr\"odinger equations.
\newblock {\em Applicable Analysis}, 90(10):1499--1520, 2011.

\bibitem{dang2024hyperbolic}
Trong~D. Dang, Loc~H. Nguyen, and Huong T.~T. Vu.
\newblock Determining initial conditions for nonlinear hyperbolic equations
  with time dimensional reduction and the {C}arleman contraction principle.
\newblock {\em Inverse Problems}, 40:125021, 2024.

\bibitem{deimling1985}
Klaus Deimling.
\newblock {\em Nonlinear Functional Analysis}.
\newblock Springer-Verlag, Berlin, 1985.

\bibitem{deng2015}
Li~Deng.
\newblock An inverse problem for the {S}chr\"odinger equation with variable
  coefficients and lower order terms.
\newblock {\em Journal of Mathematical Analysis and Applications},
  427(2):930--940, 2015.

\bibitem{eskin2003}
Gregory Eskin.
\newblock Inverse problems for the {S}chr{\"o}dinger operators with
  electromagnetic potentials in domains with obstacles.
\newblock {\em Inverse Problems}, 19(4):985--996, 2003.

\bibitem{eskin2008}
Gregory Eskin.
\newblock Inverse problems for the {S}chr{\"o}dinger equations with
  time-dependent electromagnetic potentials and the {A}haronov--{B}ohm effect.
\newblock {\em Journal of Mathematical Physics}, 49(2):022105, 2008.

\bibitem{huang2019carleman}
Xinchi Huang, Yavar Kian, Eric Soccorsi, and Masahiro Yamamoto.
\newblock Carleman estimate for the {S}chr\"odinger equation and application to
  magnetic inverse problems.
\newblock {\em Journal of Mathematical Analysis and Applications},
  474(1):116--142, 2019.

\bibitem{imanuvilov_yamamoto1998}
Oleg~Yu. Imanuvilov and Masahiro Yamamoto.
\newblock Lipschitz stability in inverse parabolic problems by the {C}arleman
  estimate.
\newblock {\em Inverse Problems}, 14(5):1229--1245, 1998.

\bibitem{khoa2020}
Vo~Anh Khoa, Michael~V. Klibanov, and Loc~H. Nguyen.
\newblock Convexification for a {3D} inverse scattering problem with the moving
  point source.
\newblock {\em SIAM Journal on Imaging Sciences}, 13(2):871--904, 2020.

\bibitem{kian_soccorsi2019}
Yavar Kian and Eric Soccorsi.
\newblock {H}\"older stably determining the time-dependent electromagnetic
  potential of the {S}chr\"odinger equation.
\newblock {\em SIAM Journal on Mathematical Analysis}, 51(2):627--647, 2019.

\bibitem{klibanov2022convexification}
Michael~V. Klibanov, Thuy~T. Le, Loc~H. Nguyen, Anders Sullivan, and Lam
  Nguyen.
\newblock Convexification-based globally convergent numerical method for a {1D}
  coefficient inverse problem with experimental data.
\newblock {\em Inverse Problems and Imaging}, 16(6):1579--1618, 2022.

\bibitem{klibanov2021book}
Michael~V. Klibanov and Jingzhi Li.
\newblock {\em Inverse Problems and {C}arleman Estimates: Global Uniqueness,
  Global Convergence and Experimental Data}.
\newblock De Gruyter, Berlin, 2021.

\bibitem{klibanov2022contraction}
Michael~V. Klibanov and Loc~H. Nguyen.
\newblock Carleman estimates and the contraction principle for an inverse
  source problem for nonlinear hyperbolic equations.
\newblock {\em Inverse Problems}, 38(3):035009, 2022.

\bibitem{krupchyk2023}
Katsiaryna Krupchyk and Gunther Uhlmann.
\newblock Inverse problems for nonlinear magnetic {S}chr\"odinger equations on
  conformally transversally anisotropic manifolds.
\newblock {\em Analysis and PDE}, 16(8):1825--1868, 2023.

\bibitem{lai_lu_zhou2024}
Ru-Yu Lai, Xuezhu Lu, and Ting Zhou.
\newblock Partial data inverse problems for the nonlinear time-dependent
  {S}chr\"odinger equation.
\newblock {\em SIAM Journal on Mathematical Analysis}, 56(4):4712--4741, 2024.

\bibitem{lai_zhou2023}
Ru-Yu Lai and Ting Zhou.
\newblock Partial data inverse problems for nonlinear magnetic {S}chr\"odinger
  equations.
\newblock {\em Mathematical Research Letters}, 30(5):1535--1563, 2023.

\bibitem{lattes_lions1969}
Robert Latt\`es and Jacques-Louis Lions.
\newblock {\em The Method of Quasi-Reversibility: Applications to Partial
  Differential Equations}.
\newblock Elsevier, New York, 1969.

\bibitem{LeCON2023}
Thuy~T. Le.
\newblock Global reconstruction of initial conditions of nonlinear parabolic
  equations via the {C}arleman-contraction method.
\newblock In D-L. Nguyen, L.~H. Nguyen, and T-P. Nguyen, editors, {\em Advances
  in Inverse problems for Partial Differential Equations}, volume 784 of {\em
  Contemporary Mathematics}, pages 23--42. American Mathematical Society, 2023.

\bibitem{LeLeNguyen:2024}
Huynh P. N. Le, Thuy~T. Le, and Loc~H. Nguyen.
\newblock The {C}arleman convexification method for {H}amilton-{J}acobi
  equations.
\newblock {\em Computers and Mathematics with Applications}, 159:173--185,
  2024.

\bibitem{le2024timedim}
Thuy~T. Le, Linh~V. Nguyen, Loc~H. Nguyen, and Hyunha Park.
\newblock The time dimensional reduction method to determine the initial
  conditions without the knowledge of damping coefficients.
\newblock {\em Computers and Mathematics with Applications}, 166:77--90, 2024.

\bibitem{le2022parabolic}
Thuy~T. Le and Loc~H. Nguyen.
\newblock A convergent numerical method to recover the initial condition of
  nonlinear parabolic equations from lateral {C}auchy data.
\newblock {\em Journal of Inverse and Ill-Posed Problems}, 30(2):265--286,
  2022.

\bibitem{le2025maxwell}
Thuy~T. Le, Cong~B. Van, Trong~D. Dang, and Loc~H. Nguyen.
\newblock Inverse initial data reconstruction for {M}axwell's equations via
  time-dimensional reduction method.
\newblock Preprint, arXiv:2506.20777, 2025.

\bibitem{lee2007}
J.-H. Lee, O.~K. Pashaev, C.~Rogers, and W.~K. Schief.
\newblock The resonant nonlinear {S}chr{\"o}dinger equation in cold plasma
  physics. application of {B}{\"a}cklund--{D}arboux transformations and
  superposition principles.
\newblock {\em Journal of Plasma Physics}, 73(2):257--272, 2007.

\bibitem{mercado2008}
Alberto Mercado, Axel Osses, and Lionel Rosier.
\newblock Inverse problems for the {S}chr\"odinger equation via {C}arleman
  inequalities with degenerate weights.
\newblock {\em Inverse Problems}, 24(1):015017, 2008.

\bibitem{nguyen2022hyperbolic}
Dinh-Liem Nguyen, Loc~H. Nguyen, and Trung Truong.
\newblock The {C}arleman-based contraction principle to reconstruct the
  potential of nonlinear hyperbolic equations.
\newblock {\em Computers and Mathematics with Applications}, 128:239--248,
  2022.

\bibitem{nguyen2015cloaking}
Hoai-Minh Nguyen and Loc~H. Nguyen.
\newblock Cloaking using complementary media for the {H}elmholtz equation and a
  three spheres inequality for second order elliptic equations.
\newblock {\em Transactions of the American Mathematical Society, Series B},
  2:93--112, 2015.

\bibitem{nguyen2019}
Loc~H. Nguyen.
\newblock An inverse space-dependent source problem for hyperbolic equations
  and the {L}ipschitz-like convergence of the quasi-reversibility method.
\newblock {\em Inverse Problems}, 35:035007, 2019.

\bibitem{nguyen2023carleman}
Loc~H. Nguyen.
\newblock The {C}arleman contraction mapping method for quasilinear elliptic
  equations with over-determined boundary data.
\newblock {\em Acta Mathematica Vietnamica}, 48:401--422, 2023.

\bibitem{NguyenNguyen2026}
Phuong~M. Nguyen and Loc~H. Nguyen.
\newblock A {C}arleman contraction method for inverse initial data recovery in
  the {N}avier--{S}tokes equations with unknown body force.
\newblock {\em arXiv preprint arXiv:2604.09934}, 2026.

\bibitem{NguyenNguyenVu2026}
Phuong~M. Nguyen, Loc~H. Nguyen, and Huong~T. Vu.
\newblock Solving the inverse scattering problem via {C}arleman-based
  contraction mapping.
\newblock {\em Computers and Mathematics with Applications}, 209:129--143,
  2026.

\bibitem{pitaevskii2016}
Lev Pitaevskii and Sandro Stringari.
\newblock {\em Bose-Einstein Condensation and Superfluidity}.
\newblock Oxford University Press, 2016.

\bibitem{protter1960}
Murray~H. Protter.
\newblock Unique continuation for elliptic equations.
\newblock {\em Transactions of the American Mathematical Society},
  95(1):81--91, 1960.

\bibitem{saci2023}
Abdelkarim Saci and Salah-Eddine Rebiai.
\newblock An inverse problem for the {S}chr\"odinger equation with {N}eumann
  boundary condition.
\newblock {\em Advances in Pure and Applied Mathematics}, 14(1):50--69, 2023.

\bibitem{sulem1999}
Catherine Sulem and Pierre-Louis Sulem.
\newblock {\em The Nonlinear {S}chr\"odinger Equation: Self-Focusing and Wave
  Collapse}.
\newblock Springer, New York, 1999.

\bibitem{trong2025elastic}
Dang~D. Trong, Chanh~V. Le, Khoa~D. Luu, and Loc~H. Nguyen.
\newblock Recovery of initial displacement and velocity in anisotropic elastic
  systems by the time dimensional reduction method.
\newblock {\em Journal of Computational Physics}, 542:114371, 2025.

\bibitem{van2025navierstokes}
Cong~B. Van, Thuy~T. Le, and Loc~H. Nguyen.
\newblock The inverse initial data problem for anisotropic {N}avier--{S}tokes
  equations via {L}egendre time reduction method.
\newblock {\em Communications in Nonlinear Science and Numerical Simulation}, 161:110074, 2026.

\bibitem{vitanov2013}
Nikolay~K. Vitanov, Amin Chabchoub, and Norbert Hoffmann.
\newblock Deep-water waves: On the nonlinear {S}chr{\"o}dinger equation and its
  solutions.
\newblock {\em Journal of Theoretical and Applied Mechanics}, 43(2):169--191,
  2013.

\bibitem{zeidler1985}
Eberhard Zeidler.
\newblock {\em Nonlinear Functional Analysis and its Applications, Volume
  {III}: Variational Methods and Optimization}.
\newblock Springer-Verlag, New York, 1985.

\end{thebibliography}

\end{document}